\title{Affinization of Segre products of partial linear spaces}
\author{Krzysztof Petelczyc, Mariusz {\.Z}ynel}
\DeclareMathOperator{\GF}{GF}
\DeclareMathOperator{\GL}{GL}
\DeclareMathOperator{\covering}{SC}
\DeclareMathOperator{\colin}{\mathbf{L}}
\def\bigtimes{\mbox{\Large$\times$}}
\def\LineOn(#1,#2){\overline{{#1},{#2}\rule{0em}{1,5ex}}}
\def\adjac{\mathrel{\sim}}
\def\penc{{\bf p}}
\def\lines{{\cal L}}
\def\ProjectiveSpSymb{\mathbf{P}}
\def\ProjSpace(#1){\ensuremath{\ProjectiveSpSymb(#1)}}
\def\PencSpace(#1,#2){\ensuremath{\ProjectiveSpSymb_{#1}(#2)}}
\def\TopSpace(#1,#2){\ensuremath{\ProjectiveSpSymb^{\mbox{\tiny\upshape top}}_{#1}(#2)}}
\def\StarSpace(#1,#2){\ensuremath{\ProjectiveSpSymb^{\mbox{\tiny\upshape star}}_{#1}(#2)}}
\def\PencSpacex(#1,#2){\ensuremath{\ProjectiveSpSymb^{\ast}_{#1}(#2)}}
\def\StarOf(#1){{\mathrm{S}}({#1})}
\def\TopOf(#1){{\mathrm{T}}({#1})}
\def\AfSpace(#1){{\bf A}({#1})}
\def\AfSpaceg(#1,#2){{\bf A}_{{#1}}({#2})}
\def\hipa{\mathscr{H}}
\def\hipy{\mathcal{H}}
\def\prodparal{\mathrel{\parallel^\sim}}
\def\veblparal{\mathrel{\parallel^\circ}}
\def\astparal{\mathrel{\parallel^\ast}}
\def\quadrparal{\mathrel{\parallel_\circ}}
\def\kolczaty{spiky}
\def\luskwiaty{flappy}
\def\A{\mathfrak{A}}
\def\M{\mathfrak{M}}
\def\PS{\mathfrak{P}}
\def\subst(#1,#2,#3){{#1}[{#2}/{#3}]}
\def\bsubst(#1,#2,#3){{#1}\Bigl[{#2}/{#3}\Bigr]}
\def\Aaff{\A_{\text{aff}}}
\def\Aproj{\A_{\text{proj}}}
\newenvironment{cmath}{%
  \par
  \smallskip
  \centering
  $
}{%
  $
  \par
  \smallskip
  \csname @endpetrue\endcsname
}
\newenvironment{ctext}{%
  \par
  \smallskip
  \centering
}{%
 \par
 \smallskip
 \csname @endpetrue\endcsname
}
\def\myend{\hfill\mbox{\small$\bigcirc$}}
\begin{document}

\def\interpretname{Interpretation}
\newtheorem{intrprx}[thm]{\interpretname}
\newenvironment{intrpr}{\begin{intrprx}\normalfont}{\myend\end{intrprx}}

\maketitle

\begin{abstract}
  Hyperplanes and hyperplane complements in the Segre product of partial linear spaces
  are investigated . The parallelism of such a complement is characterized in terms of
  the point-line incidence. Assumptions, under which the automorphisms of the complement
  are the restrictions of the automorphisms of the ambient space, are given.
  An affine covering for the Segre product of Veblenian gamma spaces is established.
  A general construction that produces non-degenerate hyperplanes in the Segre 
  product of partial linear spaces embeddable into projective space is introduced.
\end{abstract}

\begin{flushleft}\small
  Mathematics Subject Classification (2010): 51A15, 51A45, (15A69, 15A75).\\
  Keywords: Segre product, hyperplane, hyperplane complement.
\end{flushleft}



\section*{Introduction}

The term \emph{affinization} is not widely used. Its idea however, is
not only well known but also applied very often in geometry. It has been spotted
in \cite{afin:pasin} and means construction of the complement of a hyperplane
in some point-line space, inspired by construction of an affine space as a reduct
of a projective space. 
To be fair we should cite a lot more papers here. Those of a great impact
for our work are \cite{cohenshult} and \cite{cuypers}.
A problem that is closely related to the removal of a point subset or
a line subset or both is reconstruction of the ambient space from the remainder.
This is addressed in \cite{wielohol} for projective spaces 
and for Grassmann spaces, while \cite{segrel}
deals with the Segre product of Grassmann spaces.

The main part of the paper starts with the characterization of hyperplanes in the Segre product $\M$ of
partial linear spaces (Theorem \ref{thm:hip:inprod}). These are similar to
structures investigated in \cite{bertram1} and \cite{bertram2}.  Generalized
projective geometries  introduced in \cite{bertram1} are products of two
geometries, such that some distinguished subsets of this product have the
structure of an affine space. In the case of projective spaces these subsets are
directly affine spaces that emerge as hyperplane complements.  Likewise, we are
interested in locally affine structures obtained as a hyperplane complement
$\M\setminus\hipa$, the result of affinization.  In this context issues typical
to affine geometry with parallelism arise. Our goal is to solve some of them.  

\newpage

The automorphism group of $\M\setminus\hipa$ is characterized
(Theorem \ref{thm:auty:prod}).
We prove that the parallelism
$\parallel_\hipa$ is definable in terms of point-line incidence of the product
$\M$ (Proposition \ref{prop:parallelglobal}) like it is in most of 
geometries that resemble affine spaces. One of the exceptions could be spine spaces 
with affine lines only (cf. \cite{jez4jez}).

The next problem concerns the existence of a hyperplane $\hipa$ in $\M$ such that the
complement $\M\setminus\hipa$ is not isomorphic to the Segre product of the related hyperplane
complements taken in the components of $\M$. 
Those isomorphic to such products are relatively easy to find (cf. Proposition \ref{prop:isomorph}). 
In any case, under assumption that $\M$ is the product of Veblenian gamma spaces with 
lines thick enough  the complement $\M\setminus\hipa$ is covered by
affine spaces (Fact \ref{fact:covering}).

In the last section we focus on the Segre product $\M$ which components are
embeddable into projective space. For such $\M$ we introduce a general 
construction of a hyperplane, which idea is based on the characterisation of 
hyperplanes in Grassmann spaces provided in \cite{shult} (see also
\cite{debruyn}, \cite{hall}, \cite{hallshult}). This makes possible to
show that numerous non-degenerate hyperplanes in $\M$ do exist. 
The complete characterization of hyperplanes in $\M$ is
challenging and worth to be done, but it is not the goal of this
paper. 
This characterization frequently involves computations related to multilinear forms and 
hyperdeterminants. Many results in this area can be found in the literature 
or the Internet, but none of them gives an ultimate answer to our problems.
So, we can only explicitly characterize hyperplanes in the Segre product
of projective spaces, which is with no doubt the most significant class.


\section{Generalities}


\subsection{Partial linear spaces, hyperplanes and parallelism}

A structure $\M = \struct{S,\lines}$, $\lines\subseteq\sub(S)$, where
the elements of $S$ are called \emph{points} and the elements of $\lines$ 
are called \emph{lines}, is a \emph{partial linear space} iff
there are two or more points on every line,
there is a line through every point,
and any two lines that share two or more points coincide.
We say that the points $a, b\in S$ are \emph{collinear} or \emph{adjacent} in $\M$
and write $a\adjac b$ when they are on a line of $\M$. 
The set of all the points adjacent to a given point $a$
is $[a]_{\mathord{\adjac}} := \set{b\in S\colon a\adjac b}$.
A partial linear space where every two points are collinear is a \emph{linear space}.
Two lines $L, K$ are said to be \emph{adjacent}, in symbols
$L\adjac K$, whenever they share a point.

We say that three pairwise distinct points $a, b, c\in S$
form a \emph{triangle} in $\M$ if they are pairwise adjacent and not collinear.
A \emph{subspace} of $\M$ is a subset $X$ of
$S$ with the property that if a line $L$ shares two or more points with $X$, then
$L$ is entirely contained in $X$.
A subspace $X$ of $\M$ is \emph{strong} if any two points in $X$ are
collinear. 
We call a subspace $X$ of $\M$ a \emph{hyperplane} if it is proper and
every line of $\M$ meets $X$. In other words a hyperplane is a set $X$ of points
such that every line meets $X$ in one or all points.

A partial linear space is \emph{connected} iff adjacency relation $\adjac$ is connected i.e.\ when
any two points $p,q$ can be joined by a sequence
$p = a_0 \adjac a_1 \adjac\dots\adjac a_n = q$. 
It is \emph{strongly connected} iff given at least 2-element strong subspace $X$ 
and a point $p$, there is a sequence of strong subspaces
$Y_0,\dots,Y_n$ such that $X = Y_0$, $p\in Y_n$, and $2 \le\abs{Y_{i-1}\cap Y_i}$ 
for all $i=1,\dots,n$. It is clear that every strongly connected
partial linear space is connected. 
In what follows we restrict ourselves to connected partial linear spaces.

Our results involve two specific properties of hyperplanes which we define here in 
general setting. A subset $X$ of $S$ is called 
\begin{itemize}\itemsep-2pt
  \item
  \emph{spiky} when every point $a\in X$ is adjacent to
  some point $b\notin X$,
  \item
  \emph{\luskwiaty} when for every line $L\subseteq X$ there is a point
  $a\notin X$ such that $L\subseteq [a]_{\mathord{\adjac}}$.
\end{itemize}

\begin{lem}\label{prop:wystaje2kolczaty}
  A \luskwiaty\ hyperplane of a partial linear space is \kolczaty.
\end{lem}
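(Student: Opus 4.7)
The plan is to fix an arbitrary point $a\in X$ and produce a neighbour of $a$ outside $X$, thereby verifying the spiky condition directly from the flappy property together with the hyperplane property.

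First I would invoke the fact that every point lies on some line, so pick any line $L$ through $a$. Since $X$ is a hyperplane, $L$ meets $X$, and by the remark in the definition of hyperplane, $L$ meets $X$ either in exactly one point or in all its points. I would split the argument into these two cases.

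In the case $L\not\subseteq X$, the intersection $L\cap X$ must be a single point, and it is forced to be $a$ itself. Because a line contains at least two points, there exists $b\in L$ with $b\notin X$. Since $a$ and $b$ lie together on $L$, we have $a\adjac b$, which is what we need. In the remaining case $L\subseteq X$, the flappy property applied to the line $L$ furnishes a point $b\notin X$ such that $L\subseteq [b]_{\mathord{\adjac}}$; in particular $a\adjac b$ since $a\in L$.

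There is essentially no obstacle here: the argument is just a case split on whether the chosen line through $a$ is contained in $X$, and in each case we use exactly one of the two available properties (the hyperplane condition or the flappy condition). The only subtlety worth noting is that the assertion relies on every point lying on at least one line, which is part of the definition of a partial linear space adopted in the paper.
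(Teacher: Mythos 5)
Your proof is correct and uses exactly the same two ingredients as the paper's: the hyperplane property handles a line through $a$ not contained in $X$, and the flappy property handles a line contained in $X$. The paper merely phrases this contrapositively (assuming a non-spiky point and deriving a contradiction), whereas you argue directly with a case split; the content is the same.
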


\begin{proof}
  Let $\M = \struct{S,\lines}$ be a partial linear space, let
  $\hipa$ be a hyperplane in $\M$. Suppose that $\hipa$  is not \kolczaty.
  Then there is a point $q\in \hipa$ such that each line through $q$ is entirely
  contained in $\hipa$. Let $q\in L\in\lines$. As $\hipa$ is \luskwiaty\ there
  is $a\notin \hipa$ such that $L\subseteq [a]_{\mathord{\adjac}}$.
  In particular, $a \adjac q$, which is impossible.
\end{proof}
However, a \kolczaty\ hyperplane need not to be \luskwiaty. 
\begin{exm}\label{exm:spiky:nonfloppy}
Let $\PS=\struct{S,\lines}$ be a projective $3$-space, $L$ be a line, and $\hipa$ a 
hyperplane in $\PS$ such that $L\subseteq\hipa$. Take a line $M$, which is skew to $L$. 
There is a bijection $f\colon L\longrightarrow M$. 
Let $\lines_1=\bset{\LineOn(a,{f(a)})\colon a\in L}$ and $\lines_2=\set{K\in\lines\colon K\cap L=\emptyset}$. 
Then $\hipa$ is \kolczaty\ but non-\luskwiaty\ in $\struct{S,\lines_1\cup\lines_2}$.
\end{exm}
%
%


Obviously, in case of linear spaces all hyperplanes are \luskwiaty\, and thus \kolczaty\ as well.
Moreover, in this case hyperplanes are maximal proper subspaces, so
there are no distinct hyperplanes such that one is contained in the other.
However, it is possible in partial linear spaces.

\begin{exm}
  Take a projective space $\PS = \struct{S, \lines}$ that is at least a plane.
  Let $X_1, X_2$ be two distinct hyperplanes in $\PS$.
  Set $\hipa_1 := X_1\cap X_2$,\ $\hipa_2 := X_2$, and
  \begin{cmath}
    \M := \bstruct{X_1\cup X_2,
      { \{ L\in\lines\colon L\subseteq X_1\cup X_2 \} }}.
  \end{cmath}
  It is clear that $\M$ is a partial linear space where $\hipa_1, \hipa_2$ are two
  distinct hyperplanes with $\hipa_1\subsetneq\hipa_2$.
\end{exm}

One can also say that hyperplanes in linear spaces are minimal sets satisfying
their definition. Spiky hyperplanes, which are of our principal concern in this
paper, exhibit similar behaviour in partial linear spaces, they are minimal
sets.

\begin{lem}\label{lem:kolczat2minim}
  Let\/ $\hipa_1, \hipa_2$ be hyperplanes in a partial linear space 
  with $\hipa_1\subseteq\hipa_2$. If\/ $\hipa_2$ is \kolczaty, then 
  $\hipa_1 = \hipa_2$.
\end{lem}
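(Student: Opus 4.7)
The plan is a short contradiction argument exploiting the spiky property of $\hipa_2$ together with the hyperplane condition applied to $\hipa_1$.

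First I would assume, toward a contradiction, that $\hipa_1\subsetneq\hipa_2$, and pick a witness $p\in\hipa_2\setminus\hipa_1$. Since $\hipa_2$ is \kolczaty, there exists a point $q\notin\hipa_2$ with $p\adjac q$, hence a line $L$ containing both $p$ and $q$.

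Next I would apply the hyperplane condition to both $\hipa_2$ and $\hipa_1$ on the line $L$. Because $q\in L$ lies outside $\hipa_2$, the line $L$ is not contained in $\hipa_2$, so by the hyperplane property of $\hipa_2$ the intersection $L\cap\hipa_2$ is exactly one point, and that point must be $p$. Now look at $\hipa_1$: again $L$ cannot be contained in $\hipa_1$ (otherwise $q\in\hipa_1\subseteq\hipa_2$, contradicting $q\notin\hipa_2$), so $L$ meets $\hipa_1$ in precisely one point, say $r$. Since $\hipa_1\subseteq\hipa_2$, we have $r\in L\cap\hipa_2=\{p\}$, forcing $r=p$ and hence $p\in\hipa_1$, in contradiction with the choice of $p$.

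There is no real obstacle here; the only thing to be careful about is the bookkeeping of the two applications of the hyperplane condition (one for $\hipa_2$ giving $L\cap\hipa_2=\{p\}$, one for $\hipa_1$ giving a single intersection point which is then trapped inside $\{p\}$ via the inclusion $\hipa_1\subseteq\hipa_2$). The spiky assumption is used exactly once, to produce the external neighbour $q$ that prevents $L$ from sitting inside either hyperplane.
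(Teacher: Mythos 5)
Your argument is correct and is essentially the paper's own proof: both take $p\in\hipa_2\setminus\hipa_1$, use the \kolczaty\ property to produce a line $L$ with $L\cap\hipa_2=\{p\}$, and then derive a contradiction from the fact that $\hipa_1\subseteq\hipa_2$ forces $L$ to miss $\hipa_1$ (the paper states this as $L\cap\hipa_1=\emptyset$, you phrase it as the unique intersection point being trapped in $\{p\}$). The two formulations are logically identical, so there is nothing to add.
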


\begin{proof}
  Suppose that there is a point $p\in\hipa_2\setminus\hipa_1$. As $\hipa_2$ is
  spiky,  there is a  line $L$ such that $L\cap\hipa_2 = \set{p}$. But
  then $L\cap\hipa_1 = \emptyset$, a contradiction.
\end{proof}

A hyperplane restricted to a substructure is a hyperplane in that substructure.

\begin{lem}\label{lem:hip:restricted}
  Let\/ $S_0\subseteq S$, $\lines_0\subseteq\lines\cap\sub(S_0)$, 
  and $\hipa$ be a hyperplane in\/ $\M$.
  If\/ $S_0\nsubseteq\hipa$, then $\hipa\cap S_0$ is a hyperplane in $\struct{S_0,\lines_0}$.
\end{lem}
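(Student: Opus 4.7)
The plan is to verify the three defining properties of a hyperplane (subspace, proper, meets every line) directly in the restricted structure $\struct{S_0,\lines_0}$, reading them off from the corresponding properties of $\hipa$ in $\M$.

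First I would check that $\hipa\cap S_0$ is a subspace of $\struct{S_0,\lines_0}$: suppose $L\in\lines_0$ shares at least two points with $\hipa\cap S_0$. Since $\lines_0\subseteq\lines$, $L$ is a line of $\M$ sharing two points with $\hipa$, so $L\subseteq\hipa$ as $\hipa$ is a subspace of $\M$. Combined with $L\subseteq S_0$ (which follows from $\lines_0\subseteq\sub(S_0)$), this gives $L\subseteq\hipa\cap S_0$.

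Next, properness: the hypothesis $S_0\nsubseteq\hipa$ yields some $p\in S_0\setminus\hipa$, and hence $p\in S_0\setminus(\hipa\cap S_0)$, so $\hipa\cap S_0\subsetneq S_0$. This is the only place where the hypothesis is actually used. Finally, to see that every line of $\lines_0$ meets $\hipa\cap S_0$: any $L\in\lines_0$ is a line of $\M$, so $L\cap\hipa\neq\emptyset$; but $L\subseteq S_0$, so $L\cap\hipa\subseteq\hipa\cap S_0$, whence $L$ meets $\hipa\cap S_0$.

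There is no real obstacle here — the argument is a routine unpacking of definitions. The only point worth flagging is the role of the assumption $\lines_0\subseteq\sub(S_0)$ (rather than merely $\lines_0\subseteq\lines$), without which lines of $\lines_0$ need not lie in $S_0$ and the ``meets'' part would fail.
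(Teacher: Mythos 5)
Your proof is correct and follows essentially the same route as the paper's: check the subspace property by passing to $\M$, get properness from $S_0\nsubseteq\hipa$, and get the meeting property from $L\subseteq S_0$ together with $L\cap\hipa\neq\emptyset$. Your closing remark about where $\lines_0\subseteq\sub(S_0)$ is needed is a nice touch, but otherwise there is nothing to add.
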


\begin{proof}
  Set $\M_0 := \struct{S_0,\lines_0}$ and $\hipa_0 := \hipa\cap S_0$.
  Let $L\in\lines_0$ be such that $\abs{L\cap \hipa_0}\geq 2$. Then 
  $\abs{L\cap \hipa}\geq 2$. Since $L\subseteq S_0$, we have $L\subseteq\hipa_0$.
  Therefore $\hipa_0$ is a subspace of $\M_0$.
  The assumption  $S_0\nsubseteq\hipa$ implies that $\hipa_0$ is a proper subspace of $\M_0$.
  Next, let $K\in\lines_0$. Notice that there is $p\in K\cap\hipa$.
  As $K\subseteq S_0$ we get $p\in K\cap\hipa_0$ which completes the proof.
\end{proof}

A \emph{gamma space} is a partial linear space where $[a]_{\mathord{\adjac}}$ 
is a subspace for all $a\in S$. Gamma spaces are also known as those partial
linear spaces satisfying \emph{none-one-or-all} axiom.
A partial linear space is said to be \emph{Veblenian} iff
for any two distinct lines $L_1, L_2$ through a point $p$ and
any two distinct lines $K_1, K_2$ not through the point $p$
whenever $L_1,L_2 \adjac K_1, K_2$, then $K_1 \adjac K_2$.
Note that a projective space is a Veblenian linear space with lines 
of size at least 3.

A structure
\begin{cmath}
\A = \struct{S,\lines,\parallel}
\end{cmath}
is a \emph{partial affine partial linear space} iff
$\struct{S,\lines}$ is a partial linear space and
$\parallel$ is an equivalence relation on $\lines$ such that
$L \adjac K$ and $L \parallel K$ implies that $L = K$
for all $L, K\in\lines$.
A partial affine partial linear space $\A$ is an \emph{affine partial linear space} (cf.\ \cite{apls})
when
for all $a\in S, L\in\lines$ there is $K\in\lines$ such that 
$a \in K\parallel L$.
A partial affine partial linear space $\A$ is said to
satisfy \emph{the Tamaschke Bedingung} when
\begin{multline}
  \text{\it for any two lines }\, L_1, L_2\, \text{\it through a point } p\
  \text{\it and any two other lines }\\
  K_1, K_2\, \text{\it not through } p\
  \text{\it if }\, K_1\adjac L_1, L_2,\ K_2\adjac L_1,\ K_1\parallel K_2\
  \text{\it then }\, L_2 \adjac K_2,
\end{multline}
and it is said to satisfy the \emph{parallelogram completion condition} when
\begin{multline}
  \text{\it for any two pairs of parallel lines }
    L_1\parallel L_2,\ K_1 \parallel K_2
  \\
  \text{\it if }\, L_1, L_2 \adjac K_1\ \text{\it and}\
  L_1 \adjac K_2, \text{ then } L_2 \adjac K_2.
\end{multline}
Observe that an \emph{affine space} is an affine linear space which satisfies
the Tamaschke Bedingung and the parallelogram completion condition.


\subsection{Segre products}

Let $I$ be a countable set ($2\leq\abs{I}$) and
let $\M_i = \struct{S_i, \lines_i}$ be a partial linear space for $i\in I$.
Take
\begin{cmath}
  S := \bigtimes_{i\in I} S_i.
\end{cmath}
To make notation easier we apply the following convention: given
$a=(a_1,a_2\ldots)\in S$ and $i\in I$ for a point $x\in S_i$ we write
\begin{cmath}
  \subst(a,i,x) := (a_1, \dots,a_{i-1}, x, a_{i+1}, \dots);
\end{cmath}
for a set $A\subseteq S_i$ we write
\begin{cmath}
  \subst(a,i,A) :=
    \{ (a_1, \dots, a_{i-1}) \}\times A \times \{ (a_{i+1}, \dots) \};
\end{cmath}
for a family ${\cal F} = \{A_j\subseteq S_i\colon j\in J\}$ of subsets of $S_i$,
$J$ being some set of indices,
we write
\begin{cmath}
  \subst(a,i,{\cal F}) :=
    \Bigl\{ \{ (a_1, \dots, a_{i-1}) \}\times A_j \times \{ (a_{i+1}, \dots) \}\colon j\in J \Bigr\}.
\end{cmath}
Now take
\begin{cmath}
  \lines := \bigcup_{i\in I} \Bigl\{ \subst(a,i,\lines_i)\colon a\in S \Bigr\}.
\end{cmath}
The structure
$$
  \bigotimes_{i\in I}\M_i= \struct{S, \lines}
$$
will be called \emph{the Segre product} of $\M_i$. We say that a line $L$ in this
product arises as a line $l$ of $\M_i$ if $L = \subst(a, i, l)$ for some $a\in S$, $i\in I$.
Based on \cite{naumo} let us recall some simple facts.

\begin{fact}\label{fct:prodpls:gener}
  Let\/ $\M = \bigotimes_{i\in I}\M_i$ be the Segre product of partial linear spaces $\M_i$.
  \begin{sentences}\itemsep-2pt
  \item\label{prodpls:axiom}
    The product $\M$ is a partial linear space.
    The connected component of a point $a$ of\/ $\M$ is the set
    $\bset{ x\in S\colon \babs{i\colon x_i\neq a_i} < \infty }$.
    Consequently, $\M$ is connected whenever $I$ is finite.
  \item\label{prodpls:triangle}
    A triangle in\/ $\M$ has the form $\subst(a,i,T_i)$
    for some $a\in S$, $i\in I$, and a triangle $T_i$ in $\M_i$. 
  \item\label{prodpls:strongsub}
    A strong subspace of\/ $\M$ has the form $\subst(a,i,X_i)$
    for some $a\in S$, $i\in I$, and a strong subspace $X_i$ in $\M_i$.
  \item\label{prodpls:gamma}
    If all the\/ $\M_i$ are gamma spaces, then\/ $\M$ is a gamma space.
  \item\label{prodpls:veblen}
    If all the\/ $\M_i$ are Veblenian, then\/ $\M$ is Veblenian.
  \end{sentences}
\end{fact}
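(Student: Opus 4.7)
The plan is to prove all five parts from a single structural observation: every line $L$ of the Segre product has a unique \emph{direction} $i\in I$ in the sense that $L=\subst(a,i,l)$ for exactly one choice of $a$, $i$, and $l\in\lines_i$; consequently, two distinct points of $\M$ are adjacent if and only if they differ in exactly one coordinate $i$ and their $i$-th entries lie on a common line of $\M_i$. For (a) the only nontrivial axiom is coincidence: if $L=\subst(a,i,l)$ and $L'=\subst(a',j,l')$ share distinct points $x,y$, then $L$ forces $x,y$ to agree outside $\{i\}$ while $L'$ forces them to agree outside $\{j\}$; if $i\ne j$ we would conclude $x=y$, so $i=j$ and the axiom in $\M_i$ finishes the job. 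The connected-component description follows because each collinearity edge changes exactly one coordinate (so points differing in infinitely many coordinates cannot be joined by any finite chain), while the connectedness of each $\M_i$ lets us alter any finite set of coordinates one index at a time.

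Parts (b) and (c) can be unified: take a triangle $\{a,b,c\}$ or a strong subspace $X$ with $|X|\ge 2$, and suppose two adjacent pairs vary in distinct coordinates $i\ne j$. Then the third pair would differ simultaneously in coordinates $i$ and $j$, which cannot happen for adjacent points of $\M$ by the observation above. Hence every adjacent pair varies in one common index $i$, so the three points (respectively, all of $X$) lie inside a common slice $\subst(a,i,S_i)$, and the statement reduces to its counterpart in $\M_i$. Part (d) runs on the same principle: fix $p$ and a line $L=\subst(a,i,l)$ meeting $[p]_{\mathord{\adjac}}$ in two distinct points $x,y$. If $x$ or $y$ equals $p$ we are done via $l$ itself; otherwise the one-coordinate-of-difference rule forces $x$ and $y$ to differ from $p$ exactly in coordinate $i$, so $l$ meets $[p_i]_{\mathord{\adjac}}$ in two distinct points of $\M_i$. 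The gamma property of $\M_i$ then gives $l\subseteq[p_i]_{\mathord{\adjac}}$, which lifts to $L\subseteq[p]_{\mathord{\adjac}}$.

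The main obstacle is (e). Given $L_1,L_2$ through $p$ of directions $i_1,i_2$ and $K_1,K_2$ missing $p$ of directions $\iota_1,\iota_2$, with all four cross-adjacencies, I would first rule out mixed-direction configurations by the same coordinate count: if $i_1\ne i_2$, then points picked from $L_1\cap K_j$ and $L_2\cap K_j$ lie on the common line $K_j$ yet differ from $p$ in coordinates $i_1$ only and $i_2$ only respectively; they are distinct (otherwise both equal $p\notin K_j$) yet differ in two coordinates, an impossibility. Analogous bookkeeping rules out $\iota_j\ne i_1$ and, using the fact that $l_1\cap l_2=\{p_i\}$ inside $\M_i$, pins down each $K_\ell$ as $\subst(p,i,k_\ell)$ where $i:=i_1=i_2$ and $k_\ell$ is a line of $\M_i$ missing $p_i$ yet meeting both $l_1$ and $l_2$. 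The conclusion $K_1\adjac K_2$ then becomes exactly the Veblen axiom for $\M_i$ applied to $l_1,l_2,k_1,k_2$. I expect the careful direction bookkeeping in this last part, rather than any single conceptual difficulty, to be the main technical burden.
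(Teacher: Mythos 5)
Your proof is correct. Note, however, that the paper does not prove this statement at all: it is recalled as known, with a pointer to \cite{naumo}, so there is no in-paper argument to compare against. Your self-contained derivation from the single observation that each line has a unique direction $i$ (equivalently, that distinct adjacent points differ in exactly one coordinate, with collinear entries there) is the standard and essentially canonical route, and all five parts go through as you describe; in particular the direction bookkeeping in (e) works because each intersection point $L_m\cap K_\ell$ is automatically distinct from $p$ (as $p\notin K_\ell$) and hence differs from $p$ in exactly the coordinate $i_m$, which forces $i_1=i_2=\iota_1=\iota_2$ and reduces everything to the Veblen axiom in one component. Two small touches worth making explicit: the description of connected components uses the paper's standing assumption that each $\M_i$ is itself connected (otherwise only the inclusion of the component into the given set holds), and in your opening observation the point $a$ in $L=\subst(a,i,l)$ is unique only off the $i$-th coordinate, while $i$ and $l$ are genuinely unique -- a harmless imprecision that does not affect any later step.
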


If $\A_i = \struct{S_i,\lines_i,\parallel_i}$
is a partial affine partial linear space for $i\in I$
we define the Segre product
$\A=\bigotimes_{i\in I}\A_i = \struct{S,\lines,\parallel}$
so that
$\struct{S,\lines} = \bigotimes_{i\in I}\struct{S_i,\lines_i}$
and for lines $L, K$ of $\A$ we have
\begin{equation}\label{def:paral1:prod}
  L \parallel K \quad:\iff\quad (\exists i\in I)[\, L_i \parallel_i K_i\,]
\end{equation}
$L_i, K_i$ being $i$-th coordinate of $L, K$ respectively.
For further applications let us define a parallelism $\prodparal$ on $\lines$
by the following formula,
much more, in fact, in the spirit of a product
\begin{equation}\label{def:paral2:prod}
  L\prodparal K \quad:\iff\quad (\forall i\in I)[\, L_i = K_i \Lor L_i \parallel_i K_i \,].
\end{equation}

\begin{fact}[{\upshape cf.\ \cite{apls}}]\label{fct:prodapls:gener}
  \begin{sentences}\itemsep-2pt
  \item
    The Segre product\/ $\A$ is a partial affine  partial linear space.
    Its connected components are as in {\upshape \ref{fct:prodpls:gener}\eqref{prodpls:axiom}}.
  \item
    If the\/ $\A_i$ are affine partial linear spaces, then\/ $\A$ is also
    an affine partial linear space.
  \item
    If\/ $\A_i$ satisfies the Tamaschke Bedingung for $i\in I$,
    then $\A$ also satisfies this condition.
  \item
    If\/ $\A_i$ satisfies the parallelogram completion condition for $i\in I$,
    then\/ $\A$ also satisfies this condition.
  \end{sentences}
\end{fact}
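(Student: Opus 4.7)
The statement has four parts, all about the Segre product $\A = \bigotimes_{i\in I} \A_i$. My plan rests on a basic uniqueness principle: since every line of a partial linear space contains at least two points, each product line $\subst(a,i,l)$ arises from a single well-defined component $\M_i$ (a line parametrized via index $i$ varies at coordinate $i$, which precludes parametrization via any other index). Consequently, whenever $L \parallel K$ holds in $\A$, the existential clause in \eqref{def:paral1:prod} forces $L$ and $K$ to arise from the same component, say $\M_j$, with $L_j \parallel_j K_j$.

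For (a), the partial linear structure and its connected components come directly from Fact \ref{fct:prodpls:gener}\eqref{prodpls:axiom}. The equivalence properties of $\parallel$ then transfer from those of the $\parallel_i$ via the uniqueness principle. For the remaining axiom, if $L = \subst(a,i,l)$ and $K = \subst(b,i,k)$ satisfy $L \adjac K$ and $L \parallel K$, then adjacency in the product yields $a_h = b_h$ for $h \neq i$ together with $l \adjac k$ in $\M_i$; the axiom in $\A_i$ then gives $l = k$, and thus $L = K$. For (b), given $a \in S$ and $L = \subst(b,i,l)$, the affine property of $\A_i$ furnishes $k \in \lines_i$ with $a_i \in k \parallel_i l$, and $K := \subst(a,i,k)$ is the desired line through $a$ parallel to $L$.

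The main work lies in (c), which I expect to be the chief obstacle. The plan is to case-split on the components of origin of the four lines $L_1, L_2, K_1, K_2$. Since $K_1 \parallel K_2$, they arise from a common $\M_j$. I would assume for contradiction that $L_1$ arises from $\M_{i_1}$ with $i_1 \neq j$; then expanding $K_1 \adjac L_1$ forces the base-point coordinates of $K_1$ outside $j$ to match those of $p$, and also forces $p_j \in k_1$. The hypothesis $p \notin K_1$ then forces a mismatch exactly at coordinate $i_1$. Running through the three possibilities for the origin of $L_2$ — equal to $j$, equal to $i_1$, or distinct from both — produces in each subcase either $p \in K_1$ or two parallel lines in $\M_{i_1}$ sharing two distinct points; both conclusions are contradictory. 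Hence all four lines arise from $\M_j$, and the Tamaschke Bedingung in $\A_j$ directly yields $L_2 \adjac K_2$ in $\A$.

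The parallelogram completion (d) yields to the same method with less effort. Writing $L_1, L_2$ as arising from $\M_i$ and $K_1, K_2$ from $\M_j$, the case $i = j$ reduces immediately to the parallelogram completion in $\A_i$. In the case $i \neq j$, expanding $L_1, L_2 \adjac K_1$ at coordinate $i$ gives a common point of $l_1$ and $l_2$, which with $l_1 \parallel_i l_2$ forces $l_1 = l_2$; symmetrically, $L_1 \adjac K_1$ and $L_1 \adjac K_2$ at coordinate $j$ force $k_1 = k_2$. A direct check that the base-point coordinates of $L_2$ and $K_2$ agree off $\{i,j\}$ then locates the required common point, giving $L_2 \adjac K_2$.
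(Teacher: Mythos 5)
The paper offers no proof of this fact at all; it is stated with a bare ``cf.~\cite{apls}'' and the verification is delegated to that reference. Your argument is a correct direct verification, and it is the natural one: the key observation that a product line determines its component of origin (because a line has at least two points, so it varies in exactly one coordinate) is exactly what makes \eqref{def:paral1:prod} tractable, and your coordinate chases in (c) and (d) go through. I checked the delicate points: in (c), if $L_1$ arises from $\M_{i_1}$ with $i_1\neq j$, then $K_1\adjac L_1$ together with $p\notin K_1$ does force the base point of $K_1$ to disagree with $p$ exactly at $i_1$, and the three subcases for $L_2$ close as you say; in (d) the case $i\neq j$ does collapse to $l_1=l_2$ and $k_1=k_2$, after which the common point of $L_2$ and $K_2$ can be written down explicitly. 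The only wrinkle worth fixing is that in (c) the subcase where $L_2$ arises from the same component $i_1\neq j$ as $L_1$ does not end in a contradiction: there you obtain $l_1$ and $l_2$ sharing two distinct points, hence $l_1=l_2$ and $L_1=L_2$, in which case the conclusion $L_2\adjac K_2$ is immediate from the hypothesis $K_2\adjac L_1$ rather than vacuous. (They are not ``two parallel lines sharing two points''; no parallelism between $l_1$ and $l_2$ is available in that subcase.) So that branch should be phrased as a trivial case rather than an impossible one; with that adjustment the proof is complete.
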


\begin{rem}\label{rem:notapls}
  The structure $\A=\struct{S,\lines,\prodparal}$ is a partial affine partial linear space 
  but it is not an affine partial linear space.
\end{rem}

\begin{proof}
  Clearly $\A$ is a partial linear space and $\prodparal$ is an equivalence relation on $\lines$.
  Let $i\in I$, $a\in S$, $l\in\lines_i$ and $L=\subst(a,i,l)$.
  Suppose that there is $K\in\lines$ such that $L \adjac K$ and $L\prodparal K$. Then $K=\subst(a,i,k)$ for
  some line $k\in\lines_i$, $k\parallel_i l$ and lines $k$, $l$ share a point. It follows that $k=l$
  and consequently $K=L$, so $\A$ is partial affine.

  There is however $b\in S$ such that for some $i_1,i_2\in I$
  we have $b_{i_1},b_{i_2}\neq a_i$ for all
  $i\in I$. Hence no line through $b$ is parallel to $L$, and thus $\A$ is not affine.
\end{proof}


\section{Affinization of partial linear spaces}

Let $\M = \struct{S,\lines}$ be a partial linear space and $\hipa$ its hyperplane.
We write $\lines^\propto = \left\{ L\in\lines\colon L\nsubseteq\hipa \right\}$.
For each $L\in\lines^\propto$ there is a unique point $L^\infty\in\hipa$ with
$L^\infty\in L$. This enables us to define a natural parallelism $\parallel_{\hipa}$
on $\lines^\propto\times\lines^\propto$ by the following condition
\begin{equation}\label{eq:hparallelism}
  L \parallel_{\hipa} K \iff L^\infty = K^\infty.
\end{equation}
The complement of $\hipa$ in $\M$ is
\begin{equation}\label{def:afinizacja}
  \M\setminus\hipa = \struct{S \setminus \hipa, \lines^\propto, \parallel_{\hipa}}.
\end{equation}
With straightforward reasoning we get the following.

\begin{fact}\label{fct:afred:gener}
  Let\/ $\M$ be a partial linear space and let\/ $\hipa$ be its hyperplane.
  \begin{sentences}\itemsep-2pt
  \item\label{afredgen:papls}
    The complement $\A = \M\setminus\hipa$ is a partial affine partial linear space.
  \item\label{afredgen:apls}
    If\/ $\hipa$ is \kolczaty, then the complement\/ $\A$ is an affine partial linear space iff
    $a \adjac b$ for all $a\in\hipa$ and $b\notin \hipa$.
  \item\label{afredgen:ls}
    If\/ $\M$ is a linear space, then\/ $\A$ is an affine linear space.
  \item\label{afredgen:veb2tam}
    If\/ $\M$ is Veblenian, then\/ $\A$ satisfies the parallelogram completion
    condition and the Tamaschke Bedingung.
  \end{sentences}
\end{fact}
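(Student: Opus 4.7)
My plan is to verify each of the four parts in turn; parts (a)--(c) are direct checks from the definitions, while part (d) is the main obstacle and relies on the Veblenian axiom of $\M$. For (a) I would observe that $\struct{S\setminus\hipa,\lines^\propto}$ inherits partial linearity from $\M$ (two lines that share two points coincide in $\M$, hence in the complement), while $\parallel_{\hipa}$, defined by equality of the distinguished points $L^\infty$, is manifestly an equivalence relation. The remaining axiom amounts to the following: if $L\adjac K$ in the complement and $L\parallel_{\hipa}K$, then $L$ and $K$ share both $L^\infty=K^\infty\in\hipa$ and a common point of $S\setminus\hipa$, hence coincide in $\M$.

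For (b), the ``if'' direction is immediate: given $a\notin\hipa$ and $L\in\lines^\propto$ with $L^\infty=p$, the line through $a$ and $p$ that the hypothesis $a\adjac p$ provides belongs to $\lines^\propto$ and is $\parallel_{\hipa}$-parallel to $L$. For the converse I fix $a\notin\hipa$ and $p\in\hipa$; spikiness of $\hipa$ yields $b\notin\hipa$ with $p\adjac b$, so the line through $p,b$ lies in $\lines^\propto$ with $\infty$-point $p$, and applying the affine postulate to $a$ and this line produces a line through $a$ whose $\infty$-point is again $p$, forcing $a\adjac p$. Part (c) then follows at once: when $\M$ is linear every point-pair is collinear, so the side condition of (b) is automatic, and any two points of $S\setminus\hipa$ lie on their joining line of $\M$, which is not contained in $\hipa$.

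Part (d) rests on the Veblenian axiom applied inside $\M$. For the parallelogram completion I set $p:=L_1^\infty=L_2^\infty$ and $q:=K_1^\infty=K_2^\infty$, and reduce to the case where the four lines are pairwise distinct --- the various coincidences collapse to trivial instances of the conclusion using transitivity of $\parallel_{\hipa}$ and the partial affine axiom established in (a). Then $p\neq q$, since otherwise the hypothesis $L_1\adjac K_1$ in the complement would give a common point $a_1\in S\setminus\hipa$ besides $p=q$, forcing $L_1=K_1$. I now apply Veblen in $\M$ with pivot $a_1:=L_1\cap K_1$: the lines $L_1,K_1$ pass through $a_1$, while $L_2,K_2$ do not (otherwise they would share with $L_1$ or $K_1$ a second point besides $p$ or $q$); the four required adjacencies $L_1\adjac L_2$ (via $p$), $K_1\adjac K_2$ (via $q$), together with the two remaining hypotheses, all hold, so Veblen delivers $L_2\adjac K_2$ in $\M$. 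The common point is neither $p$ (else $p=K_2^\infty=q$) nor $q$ (symmetrically), so it lies in $S\setminus\hipa$. The Tamaschke Bedingung is handled identically with the same pivot; the only new point is that the intersection $L_2\cap K_2=q$ is excluded, because $q\in L_2$ would force $L_2^\infty=q=K_1^\infty$, and then a second shared point $a_2$ from $K_1\adjac L_2$ would yield $L_2=K_1$, contradicting that $L_2$ passes through the non-hyperplane point $p$ while $K_1$ does not.
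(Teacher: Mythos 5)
Your proof is correct. The paper offers no argument for this Fact (it is introduced with ``With straightforward reasoning we get the following''), and your write-up supplies exactly the intended details: parts (a)--(c) by direct unwinding of the definitions of $\parallel_{\hipa}$ and spikiness, and part (d) by applying the Veblen axiom of $\M$ at a pivot point of $S\setminus\hipa$ (the intersection $L_1\cap K_1$), with the hyperplane-intersection properties of lines in $\lines^\propto$ ruling out that the resulting common point of $L_2$ and $K_2$ lies on $\hipa$.
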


\begin{rem}
  The converse of \ref{fct:afred:gener}\eqref{afredgen:ls} is false, in general.
\end{rem}

\begin{proof}
  Take any linear space $\M = \struct{S,\lines}$
  with a hyperplane $\hipa$ (e.g. let $\M$ be
  a classical projective space). Consider the set
  $\lines(\hipa) = \{ L\in\lines\colon L\subseteq\hipa \}$,
  then $\M' = \struct{S,\lines\setminus\lines(\hipa)}$ is not a linear space.
  Nevertheless, $\hipa$ is a hyperplane of $\M'$ and
  $\M'\setminus\hipa = \M\setminus\hipa$ is a linear space.
\end{proof}

Affinization may break vital properties like connectedness.
 
\begin{exm}\label{exm:affinconnected}
  Let $Y_1,Y_2$ be two projective $k$-subspaces of a projective space $\goth P$
  such that $\hipa=Y_1\cap Y_2$ has dimension $k-1\geq 1$. Let $\M$ be the
  restriction of $\goth P$ to $Y_1 \cup Y_2$. Then $\M$ is a 
  strongly connected Veblenian gamma space and $\hipa$ is a \luskwiaty\ hyperplane
  in $\M$. However, $\M\setminus\hipa$ is not connected, and thus not strongly
  connected.
\end{exm}

It may also break the property of being \kolczaty\ or being \luskwiaty. That is,
if $\hipa$ is a \luskwiaty\ hyperplane in $\M$, then its restriction to 
a substructure $\struct{S_0, \lines_0}$, in the sense of \ref{lem:hip:restricted},
may be non-\kolczaty, and consequently non-\luskwiaty, hyperplane in that substructure. 

There is a natural correspondence between strong subspaces of hyperplane
complements in Veblenian gamma spaces and strong subspaces of the respective
ambient spaces.

\begin{lem}\label{prop:strong:afprodPLS}
  Let\/ $\M$ be a Veblenian gamma space with lines of size at least 4 and let $\hipa$
  be a hyperplane in\/ $\M$.
  A set\/ $X\subseteq S$ is a strong subspace in\/ $\M\setminus\hipa$ iff there is
  a strong subspace\/ $Y$ in\/ $\M$ such that\/ $X=Y\setminus\hipa$.
\end{lem}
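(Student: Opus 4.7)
Set $Y := X\cup D(X)$ where $D(X) := \{L^\infty : L\in\lines^\propto,\ |L\cap X|\ge 2\}$. Since $X\cap\hipa = \emptyset$ and $D(X)\subseteq\hipa$, the equality $Y\setminus\hipa = X$ is built in, so both implications reduce to verifying the appropriate subspace property.

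\textbf{Direction $(\Leftarrow)$.} Suppose $Y$ is a strong subspace of $\M$ with $Y\setminus\hipa = X$. For $a,b\in X\subseteq Y$ the unique $\M$-line through them lies in $Y$ and, since $a,b\notin\hipa$, belongs to $\lines^\propto$; and a line $L\in\lines^\propto$ meeting $X$ in two points is forced into $Y$ by the subspace axiom, hence $L\setminus\{L^\infty\}\subseteq Y\setminus\hipa = X$.

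\textbf{Direction $(\Rightarrow)$.} Pairwise collinearity of $Y$ in $\M$ follows from the gamma (none-one-or-all) axiom: for $a\in X$ and $q=L^\infty\in D(X)$, two distinct $r,s\in L\cap X$ give $a\adjac r,s$ by the strongness of $X$, and gamma applied to $a$ and $L$ forces $a\adjac q$; two points of $D(X)$ are handled by chaining this step through an $X$-point on a witness line. For the subspace axiom, take $K\in\lines$ with $|K\cap Y|\ge 2$; the only delicate case is $K\in\lines^\propto$ with $K\cap X = \{b\}$ and $K^\infty = q\in D(X)$ witnessed by some line $M\ne K$. By thickness $|M|\ge 4$ and the subspace property of $X$ in the complement, $|M\cap X|\ge 3$; pick distinct $s_1,s_2,s_3\in M\cap X$ and let $N_i\in\lines^\propto$ be the line from $b$ to $s_i$ (existing by gamma). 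These $N_i$ are distinct with pairwise distinct infinite points (two lines through $b$ sharing an infinite point coincide). For an arbitrary $t\in K\setminus\{b,q\}$, the subspace $\pi$ of $\M$ generated by $K$ and $M$ is, by a standard consequence of the Veblenian-plus-gamma-plus-thickness hypotheses, a projective-plane-like strong subspace: inside $\pi$, $t$ is joined to each $s_i$ by a line $R_i\subseteq\pi$, and $R_i$ meets each $N_j$ in a unique point $u_{ij}$. Since $u_{ij}\in\hipa$ iff $R_i^\infty = N_j^\infty$ and the $N_j^\infty$ are pairwise distinct, for each $i$ at most one $j$ makes $u_{ij}$ ideal; some $j\ne i$ therefore gives $u_{ij}\in N_j\setminus\{N_j^\infty\}\subseteq X$ with $u_{ij}\ne s_i$, so $|R_i\cap X|\ge 2$ and $t\in R_i\setminus\{R_i^\infty\}\subseteq X$ by the subspace property of $X$ in the complement. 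The remaining case $K\subseteq\hipa$ reduces to the previous one by joining any $a\in X$ to the points of $K\cap Y$ via gamma and running the same plane argument in $\pi$ generated by $K$ together with one of the witness lines.

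\textbf{Main obstacle.} The crux is the structural fact invoked in the plane argument---that in a Veblenian gamma space with lines of size $\ge 4$, the subspace generated by two intersecting lines is a projective-plane-like strong subspace in which any two lines meet. This step uses all three hypotheses on $\M$ (Veblenian, gamma, thickness); thickness $\ge 4$ is also what makes the pigeonhole on the pairwise-distinct infinite points of the $N_j$ actually produce a non-ideal intersection, which is what secures the second $X$-point on each $R_i$.
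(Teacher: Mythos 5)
Your overall architecture matches the paper's: you set $Y:=X\cup D(X)$ (the paper's $X^\infty$), get pairwise collinearity from the gamma axiom seeded by the strongness of $X$, and handle the subspace property with Veblen-type incidences; your pigeonhole on the pairwise distinct ideal points of the $N_j$ is a nice variant of the paper's counting. But the step you yourself flag as the crux is a genuine gap: it is \emph{not} true that in a Veblenian gamma space with lines of size at least $4$ the subspace generated by two intersecting lines is a projective-plane-like strong subspace in which any two lines meet. A generalized quadrangle with thick lines is a counterexample: it is a gamma space, and it satisfies the Veblen condition vacuously (a line meeting two lines through $p$ away from $p$ would create a triangle), yet two intersecting lines generate only their set-theoretic union, which is not strong, and the joining lines $R_i=\LineOn(t,{s_i})$ you need do not exist there. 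So the ``standard consequence'' you invoke is false as stated and cannot carry the argument.

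What saves the configuration is that the facts you actually use follow directly from the axioms without any generated plane, and this is precisely how the paper argues. First, $b\adjac s_i$ holds because $b,s_i\in X$ and $X$ is strong in the complement (not ``by gamma'', as you write). Then $s_i$ is adjacent to the two points $b$ and $q$ of $K$, so the none-one-or-all axiom gives $s_i\adjac t$, which produces $R_i$; and $R_i\adjac N_j$ follows from one explicit application of the Veblen axiom to the two lines $K,M$ through $q$ and the two lines $R_i,N_j$ not through $q$, each meeting both. The paper packages this as an auxiliary claim (every $a\in X$ is joined to every $L^\infty$ with $L\in\lines^\propto(X)$ by a line carrying at least two points of $X$), proved by one Veblen application to a concrete triangle, and then reuses it. Your final case $K\subseteq\hipa$ is also too sketchy: ``the same plane argument'' must be replaced by the triangle $u,w,p$ with $p\in\LineOn(u,a)\cap X$ and a further explicit Veblen application, since at that stage you do not yet know that the third point $v$ of $K$ has a witness line. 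If you substitute these concrete gamma/Veblen derivations for the false plane lemma, your proof closes.
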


\begin{proof}
  \ltor
  Set $\lines^\propto(X) := \{ L\in\lines^\propto\colon \abs{L\cap X}\ge 2 \}$.
  Let us begin proving that 
  \begin{ctext}
  $(*)$ given $a\in X$ and $L\in\lines^\propto(X)$
  there is a line $K\in\lines^\propto(X)$ such that $a\in K\parallel_\hipa L$.
  \end{ctext}
  We drop the trivial case where $a\in L$ and assume that $a\notin L$. 
  Take two distinct points $p, q\in L\cap X$.
  As $X$ is strong we have $a\adjac p, q$. Hence by none-one-or-all axiom $a\adjac L^\infty$
  in $\M$. Set $K := \LineOn(a, L^\infty)$. Now take a point $r\in\LineOn(a, p)\cap X$
  distinct from $a, p$. The line $\LineOn(q, r)$
  intersects two sides of the triangle $a, p, L^\infty$ in $\M$ so, it intersects
  $K$ in some point $b$ by the Veblen condition. Note that $q, r\in X$ and
  $L^\infty\notin\LineOn(q,r)$ thus $b\in X$. Finally, $K = \LineOn(a, b)\in\lines^\propto(X)$
  and thus $a\in K\parallel_\hipa L$.
 
  Now, set $X^\infty := \bset{L^\infty\colon L\in\lines^\propto(X)}$ and $Y := X \cup X^\infty$.
  We will show that $Y$ is a strong subspace in $\M$.
  
  Let $u\in X^\infty$ and $a\in X$. Note that $u = L^\infty$ for some $L\in\lines^\propto(X)$.
  Then by $(*)$ we have $a\adjac u$ and $\LineOn(a, u)\subseteq Y$.
  
  Now, let $u, w\in X^\infty$. There are $L, M\in\lines^\propto(X)$ with
  $u = L^\infty$ and $w = M^\infty$.
  Take $a, b\in M\cap X$. By $(*)$ we get $u\adjac a, b$. Then, by none-one-or-all axiom
  $u\adjac w$. What is left is to show that $\LineOn(u,w)\subseteq Y$. Take a point
  $v\in\LineOn(u,w)$ distinct from $u, w$. Hence $v\in\hipa$.
  As $a\adjac u, w$ we get $a\adjac v$  by none-one-or-all axiom. 
  Take a point $p\in\LineOn(u, a)$ distinct from $u, a$. The line $K := \LineOn(v, a)$
  intersects two sides: $\LineOn(u, w)$ and $\LineOn(u, p)$, of the triangle $u, w, p$. 
  Hence, by the Veblen condition, it intersects $\LineOn(w, p)$ but not in $w$ or $p$
  as otherwise we would have $v=w$ or $v=u$, respectively, which is impossible. Therefore 
  $\abs{K\cap X}\ge 2$ and thus $v\in K\subseteq Y$.
  
  \rtol
  Immediate by the definition of a strong subspace.
\end{proof}


\subsection{Recovering}

Right from the definitions \eqref{eq:hparallelism}, \eqref{def:afinizacja},
assuming that $\hipa$ is \kolczaty,  the deleted
points on $\hipa$ can be identified with the equivalence classes of $\parallel_{\hipa}$
i.e.\ with the elements of $\lines^\propto/\mathord{\parallel_{\hipa}}$.
To recover $\M$ from its affine reduct
$\A = \M\setminus\hipa$ we need also to determine in terms of $\A$
the (ternary) collinearity relation on $\lines^\propto/\mathord{\parallel_{\hipa}}$.
In short, this is usually achieved by use of planes in $\A$ that intersect $\hipa$. 
This is a very rough approximation of what should be done.
The root of the problem is to determine assumptions under which this recovering procedure 
can be implemented.

The points $a_1, a_2,\dots$ of $\M$ are said to be \emph{collinear} when they all
are on a line of $\M$, in symbols $\colin(a_1,a_2,\dots)$.

\begin{prop}\label{prop:af2horlines}
  Let\/ $\M$ be a Veblenian gamma space with lines of size at least $3$.
  If\/ $\hipa$ is a \luskwiaty\ hyperplane in\/ $\M$, then
  for all pairwise distinct points $p_1,p_2,p_3\in\hipa$ we have
  \begin{equation}
    \colin(p_1,p_2,p_3) \iff (\exists a_1,a_2,a_3\in S\setminus\hipa)
    \bigl[\, \land_{\neq(i,j,k)} \bigl(a_i \adjac a_j \Land p_k\in\LineOn(a_i,a_j)\bigr) \,\bigr].
  \end{equation}
  The lines of\/ $\hipa$ are defined in an abstract way
  as the equivalence classes of the relation
  \begin{multline}
    \colin\bigl([L_1]_\parallel, [L_2]_\parallel, [L_3]_\parallel\bigr) \iff
      \text{ there is a triangle with the sides } K_1, K_2, K_3
      \\
      \text{ such that } K_i \parallel L_i \text{ for all\/ } i=1,2,3 \text{ in }\M\setminus\hipa. 
  \end{multline}
\end{prop}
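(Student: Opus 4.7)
The plan is to prove the biconditional in two directions and then transport it to the second display via the bijection $[L]_{\parallel_\hipa}\longleftrightarrow L^\infty$ between parallelism classes of lines missing $\hipa$ and points of $\hipa$.

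For the direction $(\Leftarrow)$, I split on whether $a_1,a_2,a_3$ are collinear. In the collinear case, the three lines $\LineOn(a_i,a_j)$ coincide, so every $p_k$ lies on that common line. In the triangle case, I first use the gamma axiom twice to deduce pairwise adjacency of the $p_k$'s: each $a_k$ is adjacent to two vertices on the opposite side of the triangle and lies off that side (otherwise two sides would coincide), so by the none-one-or-all axiom $a_k\adjac p_k$; then each $p_i$ is adjacent to two of the $a_j$'s on a side not containing $p_i$, and the same axiom yields $p_i$ adjacent to every point on that side, in particular to the other $p_k$'s. With pairwise adjacency in hand I apply the Veblen condition at the pivot $a_2$ to the lines $\LineOn(a_2,a_3)$ and $\LineOn(a_1,a_2)$ through $a_2$, together with the transverse lines $\LineOn(a_1,a_3)$ and $\LineOn(p_1,p_3)$. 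The second transverse line does not pass through $a_2$ because it lies in $\hipa$ while $a_2\notin\hipa$. The common point of $\LineOn(a_1,a_3)$ and $\LineOn(p_1,p_3)$ produced by Veblen belongs to $\hipa$ and to $\LineOn(a_1,a_3)$; since the latter is not contained in $\hipa$ and $\hipa$ is a hyperplane, that point must be $p_2$, giving $p_2\in\LineOn(p_1,p_3)$.

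For the direction $(\Rightarrow)$, let $L\subseteq\hipa$ be the line through $p_1,p_2,p_3$. Since $\hipa$ is \luskwiaty, pick $a\notin\hipa$ with $L\subseteq[a]_{\mathord{\adjac}}$. Choose $a_3\in\LineOn(a,p_3)\setminus\{a,p_3\}$, possible since lines have at least three points; then $a_3\notin\hipa$ because $\LineOn(a,p_3)$ meets $\hipa$ only at $p_3$. The subspace $[p_1]_{\mathord{\adjac}}$ contains both $a$ and $p_3$, hence the whole line $\LineOn(a,p_3)$, so $a_3\adjac p_1$; symmetrically $a_3\adjac p_2$. Choose $a_2\in\LineOn(a_3,p_1)\setminus\{a_3,p_1\}$; the analogous gamma argument at $p_3$ yields $a_2\adjac p_3$, and $a_2\notin\hipa$. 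Now I apply Veblen with pivot $p_1$ to the lines $L$ and $\LineOn(a_3,p_1)$ through $p_1$, and to the transverse lines $\LineOn(a_2,p_3)$ and $\LineOn(a_3,p_2)$; each transverse line avoids $p_1$, for otherwise it would contain two distinct points of $\hipa$ and be forced into $\hipa$, contradicting $a_2,a_3\notin\hipa$. Veblen produces $a_1\in\LineOn(a_2,p_3)\cap\LineOn(a_3,p_2)$; a brief check, using that each of these two lines meets $\hipa$ in a single known point, yields $a_1\notin\hipa$ and $a_1\neq a_2,a_3$, while the three required incidences follow at once from $a_2\in\LineOn(a_3,p_1)$, $a_1\in\LineOn(a_3,p_2)$ and $a_1\in\LineOn(a_2,p_3)$.

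The second displayed equivalence is the biconditional just proved, rewritten under $[L]_{\parallel_\hipa}\leftrightarrow L^\infty$: a triangle in $\M\setminus\hipa$ with prescribed parallelism classes for its sides is exactly a triple $a_1,a_2,a_3\notin\hipa$ witnessing the right-hand side of the first display with $p_k=L_k^\infty$, and the maximal subsets of $\hipa$ whose triples are all collinear in $\M$ are by definition the lines of $\hipa$. The main technical obstacle throughout lies in the two Veblen applications: for each I must verify that the two lines through the pivot are distinct, that neither transverse line passes through the pivot, and that the common point delivered by Veblen falls on the correct side of $\hipa$. These checks all reduce to the basic observation that $\hipa$ is a subspace meeting every line either in all or in exactly one point, but they must be carried out carefully in order not to degenerate the configuration.
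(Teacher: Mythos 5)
Your proposal is correct and follows essentially the same route as the paper: flappiness supplies the external point, the gamma (none-one-or-all) axiom propagates adjacency, and the Veblen condition closes the triangle in one direction and forces the intersection point onto $\hipa$ in the other. The only differences are cosmetic — an extra intermediate vertex in the forward direction and a different choice of pivot in the converse — plus your (vacuous but harmless) handling of the case where the $a_i$ are collinear.
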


\begin{proof}
  \ltor
  Let $L$ be the line through $p_1,p_2,p_3\in\hipa$. Since $\hipa$ is \luskwiaty\
  there is a point $a \notin\hipa$ such that $a\adjac p_i$
  for every $i=1,2,3$. Take a point $b\in \LineOn(a,p_1)$
  distinct from $a,p_1$. Since $a \notin\hipa$ we get $b\notin\hipa$ as
  well. We have $p_2\adjac a,p_1$ and thus $p_2\adjac b$. Let
  $K=\LineOn(b,p_2)$. From the Veblen condition we get a point $c\in
  K\cap\LineOn(a,p_3)$ of $\M\setminus\hipa$. Finally $a,b,c\in S\setminus\hipa$ form the required
  triangle.

  \rtol
  As $\M$ is a gamma space, from $a_2\adjac a_3,a_1$ we get
  $a_2\adjac p_2$, and then $p_2\adjac a_3,a_2$ yields
  $p_2\adjac p_1$. The line $L=\LineOn(p_1,p_2)$ meets
  $\LineOn(a_1,a_2)$ in a point $q$, that follows from the Veblen condition.
  Furthermore $L\subseteq\hipa$, thus $q\in\hipa$. If $q\neq p_3$
  then $\LineOn(q,p_3)\subseteq\hipa$ and $a_1,a_2\in\hipa$ in
  particular, that contradicts the assumptions. Therefore $q=p_3$ and
  points $p_1,p_2,p_3$ are collinear.
\end{proof}

It is more than likely that the above method that relies on flappy property of hyperplanes
to recover the ambient space is not unique and there are other procedures that could be applied.
We do not want however to go any deeper into discussion of possible methods.


\subsection{Automorphisms}\label{sec:auty:pls}

Let $\hipa$ be a hyperplane of a partial linear space $\M = \struct{S,\lines}$.
For $p\in S$, $K\in\lines$ we define
\begin{cmath}
  \Pi(p,K):=\bigcup\bset{ M\in\lines\colon p\in M, \; M\cap K\neq\emptyset }.
\end{cmath}
Two possibilities arise:
either $p\in K$ and then $\Pi(p,K)$ is the set of all points on lines through $p$,
or $p\notin K$ and then $\Pi(p,K)=\emptyset$ or $\Pi(p,K)$ is the set of all points on
lines through $p$ that cross $K$ not in $p$.
We call $\Pi(p,K)$ a \emph{near-plane} of $\M$ if $p\notin K$,
$\Pi(p,K)\neq\emptyset$, and $\Pi(p,K)$ is  not a single line.

The following is just a standard exercise.
\begin{fact}\label{fct:autyaf:gener}
  Let $\hipa$ be a hyperplane of a partial linear space $\M = \struct{S,\lines}$
  and let $\A = \M\setminus\hipa$.
  \begin{sentences}\itemsep-2pt
  \item
    If\/ $F$ is an automorphism of\/ $\M$ that preserves $\hipa$, then
    $F\restriction(S\setminus\hipa)$ is an automorphism of\/ $\A$.
  \item
    Let $f\in\Aut({\goth A})$.
    \newdimen\oldleftmarginii
    \setlength{\oldleftmarginii}{\leftmarginii}
    \setlength{\leftmarginii}{17mm}
    \begin{enumerate}[{\upshape (a)}]
    \item\label{fct:autyaf:gener:a}
      If\/ $\hipa$ is \kolczaty, then $f$ extends to a bijection $F$ of\/ $S$ determined
      by the conditions:
      \begin{ctext}
        $F(x) = f(x)$ for $x\in S\setminus\hipa$,\qquad $F(L^\infty) = f(L)^\infty$ for every line $L$ of\/ $\A$.
      \end{ctext}
    \item\label{fct:autyaf:gener:b}
      If\/ $\hipa$ satisfies the condition:
      \begin{multline}\label{zal:przecinanie}
        \text{for every point } p\notin\hipa \text{ and every line } K\nsubseteq\hipa\\
          \text{the near-plane }\Pi(p,K) \text{ of }\; \M \text{ meets } \hipa \text{ in a line},
      \end{multline}
      and the following analogue of \luskwiaty\ condition:
       \begin{multline}\label{zal:nearlyluskw}
         \text{for every line } L\subseteq\hipa \text{ there is a near-plane }
           \Pi(p,K) \text{ of }\; \M\\
             \text{containing } L \text{ such that } p\notin\hipa
               \text{ and } K\nsubseteq\hipa
      \end{multline}
     then $F$ from {\upshape \eqref{fct:autyaf:gener:a}} is an automorphism of\/ $\M$ preserving $\hipa$.
    \end{enumerate}
  \end{sentences}
\end{fact}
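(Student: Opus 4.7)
The proof naturally breaks along the three assertions. For item (i), I simply note that an automorphism $F\in\Aut(\M)$ preserving $\hipa$ permutes $\lines^\propto$ and commutes with the operation $L\mapsto L^\infty$, so the restriction $F\restriction(S\setminus\hipa)$ bijects points and lines of $\A$ and preserves parallelism, since $L\parallel_\hipa K$ iff $L^\infty=K^\infty$ iff $F(L)^\infty=F(K)^\infty$.

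For item (a) of (ii), the first task is to check that $F$ is defined on all of $S$. For $p\in\hipa$, spikiness provides a point $b\notin\hipa$ adjacent to $p$, hence a line $L=\LineOn(p,b)\in\lines^\propto$ with $L^\infty=p$; thus every point of $\hipa$ arises as some $L^\infty$, and the second defining clause covers $\hipa$. Well-definedness follows from the fact that $f\in\Aut(\A)$ preserves $\parallel_\hipa$: if $L^\infty=K^\infty$ then $L\parallel_\hipa K$, so $f(L)\parallel_\hipa f(K)$, whence $f(L)^\infty=f(K)^\infty$. Bijectivity is proved symmetrically by defining the analogous extension of $f^{-1}$ and checking it is a two-sided inverse of $F$.

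All the content lies in item (b), where I must verify that $F$ sends lines of $\M$ to lines of $\M$. Lines $L\in\lines^\propto$ are immediate: $f$ maps the affine trace $L\setminus\{L^\infty\}$ to another line of $\A$ whose closure in $\M$ has ideal point $f(L)^\infty=F(L^\infty)$, so $F(L)$ is a line of $\M$. The only real work concerns a line $L\subseteq\hipa$. Condition \eqref{zal:nearlyluskw} furnishes a near-plane $\Pi(p,K)$ with $p\notin\hipa$ and $K\nsubseteq\hipa$ containing $L$; by \eqref{zal:przecinanie}, $\Pi(p,K)\cap\hipa$ is a line, which must coincide with $L$. Rewriting $L=\bset{M^\infty: p\in M\in\lines^\propto,\,M\cap K\neq\emptyset}$, I would identify $F(L)$ with $\Pi(f(p),f(K))\cap\hipa$, which is again a line by \eqref{zal:przecinanie}, once one checks that $\Pi(f(p),f(K))$ is genuinely a near-plane (nonemptiness and non-collapse to a single line transfer from $\Pi(p,K)$ via $f$ and $f^{-1}$).

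The main bookkeeping obstacle is this last identification: one must establish a bijection $M\leftrightarrow f(M)$ between lines of $\M$ through $p$ meeting $K$ and lines through $f(p)$ meeting $f(K)$. The subtlety is that $M\cap K$ can consist either of a single affine point, in which case $f$ transports the incidence in $\A$ directly, or of the common ideal point $M^\infty=K^\infty$, in which case $M\parallel_\hipa K$ forces $f(M)\parallel_\hipa f(K)$ and the images still share an ideal point; the same dichotomy is needed in the reverse direction via $f^{-1}$. Combining this with the analogous argument applied to $f^{-1}$ itself shows that both $F$ and $F^{-1}$ preserve lines, so $F\in\Aut(\M)$; by construction $F$ sends ideal points to ideal points and affine points to affine points, so $F(\hipa)=\hipa$.
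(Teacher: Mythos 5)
Your proof is correct. The paper gives no proof of this fact at all (it is introduced as ``just a standard exercise''), and your argument --- totality of $F$ on $\hipa$ from spikiness, well-definedness from preservation of $\parallel_\hipa$, and for a line $L\subseteq\hipa$ the realization $L=\Pi(p,K)\cap\hipa$ via \eqref{zal:nearlyluskw}, \eqref{zal:przecinanie} and the partial-linear-space axiom, transported to $\Pi(f(p),f(K))\cap\hipa$ --- is exactly the intended one, and it correctly handles the one genuine subtlety, namely the dichotomy between lines meeting at an affine point (handled by $f$ as an incidence map) and lines meeting at an ideal point (handled by $f$ preserving $\parallel_\hipa$), in both directions.
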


\begin{lem}\label{lem:veblgamma}
  Let\/ $\M$ be a Veblenian gamma space with lines of size at least\/ $3$.
  \begin{sentences}\itemsep-2pt
  \item
     Hyperplanes of\/ $\M$ satisfy \eqref{zal:przecinanie}.
  \item
     Flappy hyperplanes of\/ $\M$ satisfy \eqref{zal:nearlyluskw}.
  \end{sentences}
\end{lem}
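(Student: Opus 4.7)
The plan is to reduce both statements to careful applications of the Veblen condition, the delicate point being the choice of center.

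For (a), fix $p\notin\hipa$ and $K\nsubseteq\hipa$ with $\Pi(p,K)$ a near-plane. Then $p$ is adjacent to at least two distinct points of $K$, so by the gamma axiom applied to $[p]_\adjac$ one has $K\subseteq[p]_\adjac$; hence for every $y\in K$ the line $L_y:=\LineOn(p,y)$ meets $\hipa$ in a unique point $q_y$, and unwinding the definitions gives $\Pi(p,K)\cap\hipa=\{q_y\colon y\in K\}$. I will pick distinct $y_1,y_2\in K\setminus\hipa$ (possible since $|K|\geq 3$ and $|K\cap\hipa|=1$); a few more gamma invocations give $q_1:=q_{y_1}\adjac q_2:=q_{y_2}$, so the line $M':=\LineOn(q_1,q_2)\subseteq\hipa$ is defined. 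A routine Veblen at $p$ with through-$p$ lines $L_{y_1},L_{y_2}$ and not-through-$p$ lines $K,M'$ places $q:=K\cap\hipa$ on $M'$; a similar Veblen for each $y\in K$ (using $\LineOn(q_1,q_y)$ in place of $M'$) then forces $q_y\in M'$, so $\{q_y\colon y\in K\}\subseteq M'$.

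The reverse inclusion is the main obstacle: for $z\in M'\setminus\{q_1\}$ I must produce a line through $p$ and $z$ meeting $K$, i.e., show $\LineOn(p,z)\adjac K$. This adjacency goes the ``wrong'' way for Veblen---whose conclusion is between two not-through-center lines---since one side passes through $p$. The trick is to re-center Veblen at $q_1$: with through-$q_1$ lines $L_{y_1},M'$ and not-through-$q_1$ lines $K,\LineOn(p,z)$, the four required adjacencies are witnessed at $y_1$, $p$, $q$, and $z$ respectively, and Veblen outputs $K\adjac\LineOn(p,z)$, as wanted.

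Part (b) follows a parallel but lighter route. Given a line $L\subseteq\hipa$, flappy supplies $p\notin\hipa$ collinear with every point of $L$. I pick distinct $u,v\in L$ and, using $|L_u|,|L_v|\geq 3$, points $a\in L_u\setminus\{p,u\}$, $b\in L_v\setminus\{p,v\}$, both off $\hipa$. Gamma forces $a\adjac b$, so $K:=\LineOn(a,b)$ satisfies $p\notin K$ and $K\nsubseteq\hipa$, and $L_u,L_v$ witness that $\Pi(p,K)$ is a near-plane. A first Veblen at $p$ pins down $q:=K\cap L\in\hipa$; note $u,v\neq q$ since $L_u,L_v$ meet $K$ only at $a,b$ respectively. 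For any $w\in L\setminus\{u\}$ the same re-centering works: Veblen at $u$ with through-$u$ lines $L_u,L$ and not-through-$u$ lines $K,\LineOn(p,w)$ delivers $\LineOn(p,w)\adjac K$, so $w\in\Pi(p,K)$, finishing $L\subseteq\Pi(p,K)$.

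The obstacle common to both parts is that Veblen concludes an adjacency between two not-through-center lines, whereas the hard surjectivity steps demand an adjacency between a line through $p$ and $K$. Switching the Veblen center from $p$ to a point of $\hipa$ naturally attached to the configuration---$q_1$ in (a), $u$ in (b)---flips the through/not-through roles and supplies exactly the missing adjacency.
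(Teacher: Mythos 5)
Your argument is correct, and for part (b) it is essentially the paper's own proof: the same auxiliary line $K=\LineOn(a,b)$ built from two points off $\hipa$ on $\LineOn(p,u)$ and $\LineOn(p,v)$, the same first Veblen centred at $p$ to obtain $K\adjac L$, and the same final Veblen re-centred at a point of $L$ (your $u$ is the paper's $a_1$) to absorb every remaining point of $L$ into $\Pi(p,K)$. For part (a) your forward inclusion is the paper's argument in a different packaging: the paper supposes three trace points $b_1,b_2,b_3$ span a triangle and derives $\abs{K\cap\hipa}\ge 2$, while you show directly that each $q_y$ lies on $M'=\LineOn(q_1,q_2)$; both rest on the same Veblen configuration centred at $p$ together with the gamma axiom. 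The genuine difference is your reverse inclusion. The paper stops after proving that the trace points of $\Pi(p,K)$ on $\hipa$ are collinear and never checks that every point of the resulting line actually belongs to $\Pi(p,K)$, which is what ``meets $\hipa$ in a line'' literally requires; your re-centred Veblen at $q_1$ (through-lines $L_{y_1}$ and $M'$, other lines $K$ and $\LineOn(p,z)$, with adjacencies witnessed at $y_1$, $p$, $q$, $z$) supplies exactly this missing half, and the side conditions it needs ($q_1\notin K$ because $K\cap\hipa=\{q\}$ with $q\neq q_1$, and $q_1\notin\LineOn(p,z)$ because otherwise that line would have two points in $\hipa$ yet contain $p$) all check out. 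So your write-up is, if anything, more complete than the paper's on point (i); the only cosmetic omission is the one-line remark that $q_1\neq q_2$, which follows since distinct lines through $p$ meet only at $p\notin\hipa$.
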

\begin{proof}
  Let $\hipa$ be a hyperplane in $\M$.

  (i)
  Let $\Pi(p,K)$ be a near-plane of $\M$ with $p\notin\hipa$ and $K\nsubseteq\hipa$.
  Since $\M$ is a gamma space and the size of $K$ is at least $3$,
  by definition the near-plane $\Pi(p,K)$ contains $3$ distinct lines $L_1,L_2,L_3$
  such that $p\in L_1,L_2,L_3$.
  Let $a_i\in L_i\cap K$, $b_i\in L_i\cap\hipa$ for $i=1,2,3$.
  As $\M$ is a gamma space we get $a_1\adjac b_2$, and next $b_1\adjac b_2$.
  Let $\LineOn(b_1,b_2)=L$. Assume also $b_3\notin L$. None-one-or-all axiom gives
  $b_2\adjac b_3$ and $b_3\adjac b_1$. Denote $\LineOn(b_2,b_3)=L'$ and $\LineOn(b_3,b_1)=L''$.
  Then, from the Veblen condition, $K$ meets lines $L,L',L''$ in at least two distinct points.
  These points are on $\hipa$, that contradicts  $K\nsubseteq\hipa$.
  Thus $b_3\in L$, and consequently $\hipa$ satisfies \eqref{zal:przecinanie}.

  (ii)
  Assume that $\hipa$ is \luskwiaty\ and
  let $L$ be a line contained in $\hipa$. There is a point $p\notin\hipa$
  such that $L\subseteq [p]_{\mathord{\adjac}}$. Since $\abs{L}>2$ there are two distinct lines
  $L_1,L_2$ such that $p\in L_1,L_2$ and $L_i\cap L\neq\emptyset$ for $i=1,2$.
  Let $a_i\in L_i\cap L$ and take $b_i\in L_i$ with $b_i\neq a_i,p$ for $i=1,2$.
  From none-one-or-all axiom $a_1\adjac b_2$, and then $b_2\adjac b_1$ as well.
  Denote $\LineOn(b_1,b_2)$ by $K$. Note, that $K\nsubseteq\hipa$ as $b_1,b_2\notin\hipa$.
  As ${\goth M}$ is Veblenian, we have $K\cap L\neq\emptyset$. Let us consider the
  near-plane $\Pi(p,K)$ and a point $a_3\in L$. If $a_3=a_1$, $a_3 = a_2$ or $a_3\in K\cap L$,
  then immediately $a_3\in \Pi(p,K)$. Otherwise we take a line $L_3:=\LineOn(a_3,p)$.
  The lines $K,L_3$ intersect $L,L_1$ so that there are $4$ distinct points of intersection. 
  Hence, the Veblen
  condition yields $K\cap L_3\neq\emptyset$. So, we get $a_3\in \Pi(p,K)$,
  and thus $L\subseteq \Pi(p,K)$.
\end{proof}
From \ref{fct:autyaf:gener} and \ref{lem:veblgamma} we obtain

\begin{cor}\label{cor:autext}
  If\/ $\M$ is a Veblenian gamma space with lines of size at least\/ $3$
  and\/ $\hipa$ is a \luskwiaty\/ hyperplane of\/ $\M$, then every automorphism
  of\/ $\M\setminus\hipa$ can be uniquely extended to an automorphism of\/ $\M$.
\end{cor}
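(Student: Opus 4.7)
The plan is to assemble the result directly from the machinery already in place: Lemma \ref{prop:wystaje2kolczaty}, Fact \ref{fct:autyaf:gener}, and Lemma \ref{lem:veblgamma}. Very little new work will be required; the task is mostly to verify that the hypotheses of Fact \ref{fct:autyaf:gener}\textup{(ii)(b)} are all met under the Corollary's assumptions.

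First I would fix an automorphism $f$ of $\A = \M\setminus\hipa$ and argue that $\hipa$ is spiky: since $\hipa$ is flappy by assumption, Lemma \ref{prop:wystaje2kolczaty} immediately gives that $\hipa$ is also spiky. This entitles us to invoke Fact \ref{fct:autyaf:gener}\textup{(ii)(a)} to extend $f$ to a bijection $F\colon S\to S$ by the rule $F(x) = f(x)$ for $x\in S\setminus\hipa$ and $F(L^\infty) = f(L)^\infty$ for every line $L$ of $\A$. Note that this extension is well defined precisely because distinct parallelism classes of $\parallel_\hipa$ correspond to distinct points of $\hipa$, and the spiky condition ensures every point of $\hipa$ is hit.

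Next I would check the two hypotheses \eqref{zal:przecinanie} and \eqref{zal:nearlyluskw} needed for Fact \ref{fct:autyaf:gener}\textup{(ii)(b)} to conclude that $F$ is an automorphism of $\M$ preserving $\hipa$. Under the assumption that $\M$ is a Veblenian gamma space with lines of size at least $3$, Lemma \ref{lem:veblgamma}\textup{(i)} supplies \eqref{zal:przecinanie} for every hyperplane of $\M$, and Lemma \ref{lem:veblgamma}\textup{(ii)} supplies \eqref{zal:nearlyluskw} because $\hipa$ is flappy. With both conditions verified, Fact \ref{fct:autyaf:gener}\textup{(ii)(b)} applies and yields that $F \in \mathrm{Aut}(\M)$ with $F(\hipa) = \hipa$.

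Finally I would observe that the extension $F$ is unique: any automorphism of $\M$ extending $f$ must send parallelism classes of $\parallel_\hipa$ to parallelism classes, hence must act on the points of $\hipa$ exactly by $L^\infty \mapsto f(L)^\infty$, matching the formula from \textup{(ii)(a)}. No single step here is a genuine obstacle, as all the geometric content has been absorbed into the earlier lemmas; the only minor subtlety is making sure the hypothesis ``lines of size at least $3$'' of Lemma \ref{lem:veblgamma} is exactly what the Corollary assumes, and that flappiness propagates to spikiness so that Fact \ref{fct:autyaf:gener}\textup{(ii)(a)} is available before \textup{(ii)(b)} is applied.
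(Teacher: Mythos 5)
Your proposal is correct and follows exactly the paper's own derivation: the paper obtains the corollary directly from Fact \ref{fct:autyaf:gener} and Lemma \ref{lem:veblgamma}, with spikiness supplied by Lemma \ref{prop:wystaje2kolczaty} just as you do. Your added remark on uniqueness of the extension is a welcome explicit touch, but it introduces nothing beyond what the construction $F(L^\infty)=f(L)^\infty$ already forces.
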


In view of \ref{exm:affinconnected}, connectedness of $\M$ need not to imply
connectedness of $\M\setminus\hipa$. Therefore, an essential tool to redefine
$\hipa$ in terms of its complement in $\M$, that is to extend an automorphism
of $\M\setminus\hipa$ to $\M$, is the parallelism $\parallel_{\hipa}$. That is
why one needs to be aware that \ref{cor:autext}, as well as forthcoming 
\ref{prop:redefprod} and \ref{thm:auty:prod}, are false for a hyperplane complement
considered as a point-line incidence structure without parallelism.


\section{Affinization of Segre products}


\subsection{Hyperplanes in Segre products}

Let $\M_i = \struct{S_i,\lines_i}$ be a partial linear space,
and let $\hipy_i$ be the family of all hyperplanes in\/ $\M_i$ and the point set $S_i$ for $i\in I$.
Set $S := \bigtimes_{i\in I} S_i$ and $\M := \bigotimes_{i\in I}\M_i$.
Consider a hyperplane $\hipa$ in $\M$. We will write
\begin{equation}\label{eq:hipaslice}
  \hipa^{[a]}_i := \bset{ x\in S_i\colon \subst(a, i, x)\in\hipa }.
\end{equation}
for a point $a\in S$ and $i\in I$.

\begin{thm}\label{thm:hip:inprod}
  For $\hipa\subseteq S$ the following conditions are equivalent:
  \begin{conditions}\itemsep-2pt
  \item\label{hipinprod:war1}
    $\hipa$ is a hyperplane in\/ $\M$.
  \item\label{hipinprod:war2}
    For all $a\in S$ and $i\in I$ we have $\hipa^{[a]}_i\in{\cal H}_i$ but
    $\hipa^{[a]}_i \neq S_i$ for some $a\in S$ and $i\in I$.
  \end{conditions}
\end{thm}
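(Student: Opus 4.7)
The plan is to translate the two defining conditions of a hyperplane (being a proper subspace and meeting every line) via the one-to-one correspondence $l\mapsto\subst(a,i,l)$ between lines of $\M_i$ and lines of $\M$ of a fixed ``coordinate type''. By Fact~\ref{fct:prodpls:gener}, every line of $\M$ has the form $\subst(a,i,l)$ for some $a\in S$, $i\in I$, $l\in\lines_i$, and this observation is what makes the slices $\hipa^{[a]}_i$ defined in \eqref{eq:hipaslice} the right object to work with.

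For the implication \eqref{hipinprod:war1}$\Rightarrow$\eqref{hipinprod:war2} I fix arbitrary $a\in S$, $i\in I$ and verify that $\hipa^{[a]}_i$ is a subspace of $\M_i$ meeting every line of $\M_i$. If $l\in\lines_i$ has at least two points in $\hipa^{[a]}_i$, then, by the very definition of the slice, the corresponding line $L=\subst(a,i,l)$ has at least two points in $\hipa$; since $\hipa$ is a subspace of $\M$, $L\subseteq\hipa$, and thus $l\subseteq\hipa^{[a]}_i$. Similarly, $\hipa$ meets $L$, so $\hipa^{[a]}_i$ meets $l$. Therefore $\hipa^{[a]}_i$ is either a hyperplane of $\M_i$ or the whole $S_i$, in any case a member of $\hipy_i$. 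For the proper-slice clause, pick any $p\notin\hipa$ (which exists as $\hipa$ is proper); then $p_i\notin\hipa^{[p]}_i$, so $\hipa^{[p]}_i\neq S_i$ for every $i\in I$.

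The converse \eqref{hipinprod:war2}$\Rightarrow$\eqref{hipinprod:war1} is obtained by reading the same argument backwards. Any line of $\M$ has the form $L=\subst(a,i,l)$; if $|L\cap\hipa|\ge 2$ then $|l\cap\hipa^{[a]}_i|\ge 2$, and since $\hipa^{[a]}_i\in\hipy_i$ is in particular a subspace of $\M_i$, $l\subseteq\hipa^{[a]}_i$, giving $L\subseteq\hipa$. Meeting is handled identically: $l$ meets $\hipa^{[a]}_i$, so $L$ meets $\hipa$. Properness of $\hipa$ comes directly from the hypothesis that $\hipa^{[a]}_i\neq S_i$ for some $a$ and $i$: pick $x\in S_i\setminus\hipa^{[a]}_i$ and then $\subst(a,i,x)\notin\hipa$.

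There is no real obstacle here; the only point worth flagging is the convention that $\hipy_i$ contains $S_i$ itself. This reflects the fact that a hyperplane of the product may well contain whole fibers of some coordinate directions while remaining a proper subspace globally, and the ``not all slices are improper'' clause in \eqref{hipinprod:war2} is precisely what forces $\hipa\neq S$.
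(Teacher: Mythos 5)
Your proof is correct and follows essentially the same route as the paper: both arguments reduce everything to the fibers $\subst(a,i,S_i)$, which carry all lines of $\M$ and on which $\hipa$ restricts to either the whole fiber or a hyperplane, identified with $\hipa^{[a]}_i$ via the isomorphism $x\mapsto\subst(a,i,x)$. The paper invokes Lemma~\ref{lem:hip:restricted} and leaves the details implicit, whereas you verify the subspace, meeting, and properness clauses explicitly, which is a welcome expansion rather than a different method.
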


\begin{proof}
  To justify the equivalence of \eqref{hipinprod:war1} and \eqref{hipinprod:war2}
  it suffices to consider
  the sets $\subst(a,i,S_i)$ for arbitrary $a\in S$ and $i\in I$, which are subspaces of $\M$.
  Note that either $\hipa\cap\subst(a,i,S_i)$ is the whole of $\subst(a,i,S_i)$
  or a hyperplane in it.
  Clearly, for fixed $a\in S$ and $i\in I$
  the map $S_i\ni x_i\mapsto\subst(a,i,x_i)$
  is an isomorphism of $\M_i$ onto $\subst(a,i,S_i)$.
  Therefore, there is $X\in\hipy_i$ such that
  $\subst(a,i,S_i)\cap\hipa = \subst(a,i,X)$.
  It is seen that $X = \hipa^{[a]}_i$.
\end{proof}

In particular case of a product of two spaces \ref{thm:hip:inprod} can be worded
in terms of a correlation.

\begin{rem}\label{rem:correl}
Let $I = \set{1,2}$. The set $\hipa\subseteq S$ is a hyperplane in\/ $\M$ iff
there are two maps: $\delta_i\colon S_i\lto \hipy_{3-i}$ such that
\begin{cmath}
  \delta_i(a_i) = \hipa_{3-i}^{[a]}
  \quad\text{and}\quad
  \delta_{3-i}(a_{3-i}) = \bset{a_i\in S_i\colon a_{3-i}\in\delta_i(a_i)}
\end{cmath}
for all $a=(a_1,a_2)\in S_i\times S_2$, $i\in I$ 
and there is $a_i\in S_i$ with $\delta_i(a_i)\neq S_{3-i}$ for some $i\in I$.
Moreover, if $\hipa$ is a hyperplane then
\begin{cmath}
  \hipa = \bset{(a_1, a_2)\colon a_1\in S_1, a_2\in\delta_1(a_1)} =
            \bset{(a_1, a_2)\colon a_2\in S_2, a_1\in\delta_2(a_2)}.
\end{cmath}
\end{rem}

\begin{rem}\label{rem:reduct:notAPLS}
  If\/ $\hipa$ is a \kolczaty\ hyperplane in the product $\M$ of partial linear spaces 
  on at least three points each, then $\M\setminus\hipa$ \emph{is not} an affine partial 
  linear space (cf. \ref{fct:afred:gener}\eqref{afredgen:papls}).
\end{rem}

\begin{proof}
  Note first that for a point $a$ in $\M$ we have 
  $[a]_{\mathord{\adjac}}\subseteq\bigcup_{i\in I}\subst(a,i,S_i)$.
  Suppose to the contrary that $\M\setminus\hipa$ is an affine partial linear space.
  Let $p\in\hipa$. In view of \ref{fct:afred:gener}\eqref{afredgen:apls}, all 
  the points non-collinear with $p$ lie on $\hipa$.
  In that case $x_i \neq p_i$ for all $i$ yields $x \in \hipa$, for every $x \in S$.
  From the assumptions, there exists $p'\in S$ such that $p'_j\neq p_j$
  for all $j\in I$. Let $i \in I$. There exists $x\in S_i$ with $x \neq p_i,p'_i$.
  Take $q \in S$ with $q_i = x$, $q_j = p_j$ for $j\neq i$.
  We have, consecutively, $p'\in\hipa$ (as $p$ and $p'$ differ on all of the coordinates),
  and $q \in \hipa$ (as $p'$ and $q$ differ on all of the coordinates).
  So, we get that: if $p\in\hipa$ and
  $\babs{\set{ i\in I\colon p_i = p'_i }}\leq m$ with $m=1$,
  then $p'\in\hipa$. Inductively, we can enlarge $m$ and
  finally we get $\hipa = S$, a contradiction.
\end{proof}

Although complements of \kolczaty\ hyperplanes are not affine partial linear spaces these 
hyperplanes remain beneficial for affinization: all the points of a \kolczaty\ hyperplane $\hipa$
are directions of the parallelism $\parallel_{\hipa}$ in $\M\setminus\hipa$.


\subsubsection{Non-degenerate hyperplanes}

In recovering the Segre product from the complement of its hyperplane we heavily
rely on the \luskwiaty\ property of that hyperplane. It will be shown later that
this property is related to another intrinsic property of hyperplanes.

A hyperplane $\hipa$ of a Segre product of partial linear spaces is called
\emph{non-degenerate} when $\hipa^{[a]}_i$ is a hyperplane for all $a\in S$ and $i\in I$.
In the context of \ref{rem:correl} we can say that $\hipa$ is non-degenerate if
both $\delta_1$ and $\delta_2$ take hyperplanes as their values.

Main properties of non-degenerate hyperplanes of the Segre product come into 
hyperplanes of its components.
 
\begin{fact}\label{lem:prezerw:wystaw}
  Let\/ $\hipa$ be a hyperplane of\/ $\M$.
\begin{sentences}\itemsep-2pt
\item\label{lem:prezerw:wystaw:i}
  Assume that $\hipa$ is non-degenerate. Then $\hipa$ is \luskwiaty\ iff\/ $\hipa^{[a]}_i$ is a \luskwiaty\
  hyperplane in\/ $\M_i$ for all $a\in S$ and\/ $i\in I$.
\item\label{lem:prezerw:wystaw:ii}
  The hyperplane $\hipa$ is \kolczaty\ iff\/ for every $a\in S$ there is \/ $i\in I$ 
  such that $\hipa^{[a]}_i$ is a \kolczaty\ hyperplane in\/ $\M_i$.
\end{sentences} 
\end{fact}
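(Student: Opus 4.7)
The plan rests on one structural fact about the Segre product: every line of $\M$ has the shape $L = \subst(a,i,l)$ for a unique coordinate $i \in I$ and a line $l$ of $\M_i$, and any point $b$ of $\M$ adjacent to two distinct points of $L$ must agree with those points on every coordinate other than $i$. Hence such a $b$ takes the form $\subst(a,i,y)$ for some $y \in S_i$ adjacent in $\M_i$ to the two projections, and by \eqref{eq:hipaslice} the condition $b \notin \hipa$ is exactly $y \notin \hipa^{[a]}_i$. This correspondence is the engine that moves \luskwiaty\ and \kolczaty\ witnesses back and forth between $\M$ and its slices.

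For part (i), in the $(\Rightarrow)$ direction I fix $a \in S$ and $i \in I$; non-degeneracy makes $\hipa^{[a]}_i$ a proper hyperplane, so I take any line $l \subseteq \hipa^{[a]}_i$ of $\M_i$, lift it to $L := \subst(a,i,l) \subseteq \hipa$, invoke the \luskwiaty\ property of $\hipa$ to get $b \notin \hipa$ with $L \subseteq [b]_{\mathord{\adjac}}$, and read off from the key observation the desired witness $y \notin \hipa^{[a]}_i$ adjacent in $\M_i$ to every point of $l$. For $(\Leftarrow)$ I take an arbitrary line $L \subseteq \hipa$ of $\M$, write $L = \subst(a,i,l)$ with $l \subseteq \hipa^{[a]}_i$, apply the \luskwiaty\ property of the slice to obtain $y \notin \hipa^{[a]}_i$ adjacent to every point of $l$, and lift to $\subst(a,i,y) \notin \hipa$.

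Part (ii)'s $(\Leftarrow)$ is a one-liner: given $p \in \hipa$, apply the hypothesis at $a = p$ to pick $i$ making $\hipa^{[p]}_i$ a \kolczaty\ hyperplane; since $p_i \in \hipa^{[p]}_i$, the point $p_i$ has some $\M_i$-neighbour $y \notin \hipa^{[p]}_i$, whence $\subst(p,i,y)$ is adjacent to $p$ and outside $\hipa$. The direction $(\Rightarrow)$ is the delicate one, and I would attack it contrapositively: if for some $a \in S$ no slice $\hipa^{[a]}_i$ is a \kolczaty\ hyperplane, then for each $i$ I can pick a ``blocked'' point $x_i \in \hipa^{[a]}_i$ (taking $x_i := a_i$ when $\hipa^{[a]}_i = S_i$) whose whole $\M_i$-neighbourhood lies inside $\hipa^{[a]}_i$, and the plan is to assemble these blockages into a single $p \in \hipa$ with no $\M$-neighbour outside $\hipa$, contradicting the \kolczaty\ property of $\hipa$. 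The main obstacle is exactly this assembly: a neighbour of $p$ differing in coordinate $j$ lives in $\hipa^{[p]}_j$, and this slice coincides with $\hipa^{[a]}_j$ only when $p$ and $a$ agree on all coordinates other than $j$; I would therefore try $p := \subst(a,i_0,x_{i_0})$ for a well-chosen $i_0$, so that coordinate $i_0$ is controlled directly by $x_{i_0}$, and I would handle the remaining coordinates $j \neq i_0$ by tracking how $\hipa^{[p]}_j$ deforms from $\hipa^{[a]}_j$ as we move $p$ off $a$.
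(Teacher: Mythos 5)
Your part (i) and the right-to-left half of part (ii) are correct. They fill in exactly what the paper treats as evident, and your ``key observation'' --- that a point adjacent to two distinct points of a line $L=\subst(a,i,l)$ must itself have the form $\subst(a,i,y)$ --- is the right engine for transporting flappy and spiky witnesses between $\M$ and the slices $\hipa^{[a]}_i$. So far your route coincides with what the paper implicitly relies on.

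The left-to-right half of part (ii) is where your proposal stops, and the obstacle you flag is not something you can ``track'' away: it is fatal. The paper's own proof makes precisely the leap you distrust --- it chooses, for each $i$, a point $b_i\in\hipa^{[a]}_i$ whose whole $\M_i$-neighbourhood lies in $\hipa^{[a]}_i$, and then asserts with no argument that $b=(b_1,b_2,\dots)$ lies in $\hipa$ and is adjacent to no point off $\hipa$. But the neighbours of $b$ in coordinate $j$ are governed by the slice $\hipa^{[b]}_j$, not by $\hipa^{[a]}_j$, and these differ as soon as $b$ and $a$ disagree on some other coordinate (already $b\in\hipa$ is unclear). In fact the implication is false. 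Let $\M_1$ be a single $3$-point line $\{a_1,y,w\}$ and let $\M_2$ have points $\{p,q,r,s,t\}$ and the two lines $\{p,q,r\}$, $\{r,s,t\}$; put
\[
\hipa=\{(a_1,p),(a_1,q),(a_1,r),(y,r),(w,r),(y,s),(y,t)\}.
\]
Every line of $\M_1\otimes\M_2$ meets $\hipa$ in one or all of its points, so $\hipa$ is a hyperplane, and each of its seven points has a neighbour outside it (e.g.\ $(a_1,p)\adjac(y,p)\notin\hipa$ and $(a_1,r)\adjac(a_1,s)\notin\hipa$), so $\hipa$ is spiky. Yet at $a=(a_1,r)$ we get $\hipa^{[a]}_1=S_1$, which is not a hyperplane, and $\hipa^{[a]}_2=\{p,q,r\}$, which is a hyperplane of $\M_2$ but not spiky, since the only neighbours of $p$ are $q$ and $r$. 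Hence no $i$ works at this $a$ and the right-hand side of (ii) fails while the left-hand side holds. Your instinct to isolate the ``assembly'' step as the delicate point was exactly right; neither your sketch nor the paper's argument can be completed, because the statement being proved is false in that direction.
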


\begin{proof}
  Only the right-hand part of the equivalence in \ref{lem:prezerw:wystaw}\eqref{lem:prezerw:wystaw:ii} 
  seems to be not evident. Suppose that there is a point $a\in S$ such
  that either $\hipa^{[a]}_i=S_i$ or $\hipa^{[a]}_i$ is non-\kolczaty\ for all $i\in I$. 
  Thus, for every $i\in I$ 
  there is a point $b_i\in\hipa^{[a]}_i$, which is collinear with no point in
  $\M_i\setminus\hipa^{[a]}_i$. Then $b=(b_1,b_2,\dots)\in\hipa$ is collinear
  with no point in $\M\setminus\hipa$, so $\hipa$ is non-\kolczaty.
\end{proof}

Immediately from 
\ref{lem:prezerw:wystaw} we obtain

\begin{cor}\label{cor:wystawLS}
  Let\/ $\M_i$ be a linear space for all $i\in I$ and let\/ $\hipa$ be a hyperplane in\/ $\M$.
  \begin{sentences}\itemsep-2pt
  \item\label{cor:wystawLS:i}
    The hyperplane $\hipa$ is \luskwiaty\ iff\/ $\hipa$ is non-degenerate.
  \item\label{cor:wystawLS:ii}
    The hyperplane $\hipa$ is \kolczaty\ iff\/ for every $a\in S$ there is \/ $i\in I$ 
    such that $\hipa^{[a]}_i$ is a hyperplane in\/ $\M_i$.
  \end{sentences}
\end{cor}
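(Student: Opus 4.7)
The plan is to combine Fact~\ref{lem:prezerw:wystaw} with the observation, recorded earlier in the paper, that every hyperplane of a linear space is automatically \luskwiaty\ (hence \kolczaty\ by Lemma~\ref{prop:wystaje2kolczaty}). Both parts of the corollary will be obtained by matching the respective iff of Fact~\ref{lem:prezerw:wystaw} against the simpler predicate ``$\hipa^{[a]}_i$ is a hyperplane''.

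For \eqref{cor:wystawLS:ii} I just invoke Fact~\ref{lem:prezerw:wystaw}\eqref{lem:prezerw:wystaw:ii}. In the forward direction a \kolczaty\ hyperplane in $\M_i$ is in particular a hyperplane. In the reverse direction the linear-space hypothesis promotes any such hyperplane to a \kolczaty\ one, and the Fact then delivers \kolczaty\ for $\hipa$.

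For \eqref{cor:wystawLS:i}, the direction \emph{non-degenerate} $\Rightarrow$ \emph{\luskwiaty} is equally direct from Fact~\ref{lem:prezerw:wystaw}\eqref{lem:prezerw:wystaw:i}: non-degeneracy makes every $\hipa^{[a]}_i$ a hyperplane in the linear space $\M_i$, which is automatically \luskwiaty, so the Fact applies.

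The converse, \emph{\luskwiaty} $\Rightarrow$ \emph{non-degenerate}, is the one step that does not fall out of Fact~\ref{lem:prezerw:wystaw} verbatim, and I would argue it by contradiction using the structure of lines in the Segre product. Suppose $\hipa^{[a]}_i = S_i$ for some $a\in S$, $i\in I$. Then for any $l\in\lines_i$ the line $L:=\subst(a,i,l)$ of $\M$ lies entirely in $\hipa$, so by \luskwiaty\ there is $b\notin\hipa$ with $L\subseteq[b]_{\mathord{\adjac}}$. Each of the joining lines $\LineOn(b,x)$ for $x\in L$ arises through some coordinate of the product; choosing two distinct $x_1,x_2\in L$, if both joining lines arose from coordinates different from $i$ then $b_i$ would have to equal both $(x_1)_i$ and $(x_2)_i$, which differ. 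Thus at least one joining line arises from $\M_i$, and this forces $b\in\subst(a,i,S_i)\subseteq\hipa$, a contradiction. The only real obstacle is this Segre-product unpacking of what adjacency to the points of $L$ means; the rest is pure bookkeeping between the linear-space hypothesis and Fact~\ref{lem:prezerw:wystaw}.
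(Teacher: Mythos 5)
Your proposal is correct and follows the same route as the paper: the paper derives the corollary ``immediately'' from Fact~\ref{lem:prezerw:wystaw} together with the earlier observation that in a linear space every hyperplane is \luskwiaty\ (hence \kolczaty), which is exactly your matching of ``(\kolczaty\ / \luskwiaty) hyperplane in $\M_i$'' against ``hyperplane in $\M_i$''. The one place where you do genuinely more work than the paper is the implication \emph{\luskwiaty\ $\Rightarrow$ non-degenerate}, which, as you correctly note, is not covered by Fact~\ref{lem:prezerw:wystaw}\eqref{lem:prezerw:wystaw:i} because that statement takes non-degeneracy as a hypothesis. The paper handles this silently: it is the contrapositive of Lemma~\ref{lem:hipynieluskowate} (degenerate hyperplanes are not \luskwiaty), which however only appears \emph{after} the corollary and is proved there via the classification of triangles in Segre products (Fact~\ref{fct:prodpls:gener}\eqref{prodpls:triangle}). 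Your coordinate argument --- two distinct points of $L=\subst(a,i,l)$ cannot both be joined to $b$ through coordinates other than $i$, so $b\in\subst(a,i,S_i)\subseteq\hipa$ --- is a correct, self-contained reproof of that lemma in the case at hand, and arguably makes the corollary genuinely ``immediate'' where the paper's ordering leaves a small forward reference. No gaps.
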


Now, by \ref{prop:af2horlines}, we can state the following.

\begin{prop}\label{prop:redefprod}
  If\/ $\M$ is a Veblenian gamma space with lines of size at least\/ $3$ and\/
  $\hipa$ is a non-degenerate \luskwiaty\ hyperplane in\/ $\M$, then\/ $\M$ 
  can be defined in terms of\/ $\M\setminus\hipa$.
\end{prop}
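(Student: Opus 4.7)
The plan is to describe an explicit procedure that rebuilds the incidence structure $\M$ starting from the affine reduct $\M\setminus\hipa=\struct{S\setminus\hipa,\lines^\propto,\parallel_\hipa}$, and to argue that each step is well-defined and faithful under the stated hypotheses. The whole argument essentially reduces to bundling together the earlier preparatory results.

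\textbf{Step 1 (the point set).}
First I would recover the points of $\hipa$ as the parallel classes of $\parallel_\hipa$. Since $\hipa$ is flappy, Lemma \ref{prop:wystaje2kolczaty} gives that $\hipa$ is also spiky, so for every $p\in\hipa$ there is some $L\in\lines^\propto$ with $L^\infty=p$. On the other hand each $L\in\lines^\propto$ meets $\hipa$ in exactly one point and $L\parallel_\hipa K\iff L^\infty=K^\infty$, so $L\mapsto L^\infty$ descends to a bijection $\lines^\propto/\mathord{\parallel_\hipa}\lto\hipa$. Hence the point set of $\M$ is identified with the disjoint union $(S\setminus\hipa)\sqcup(\lines^\propto/\mathord{\parallel_\hipa})$.

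\textbf{Step 2 (lines meeting the complement).}
For each $L\in\lines^\propto$ I would reintroduce the missing point by defining the corresponding line of $\M$ as $L\cup\{[L]_{\parallel_\hipa}\}$. Incidence between points of $S\setminus\hipa$ and these lines is inherited verbatim from the affine reduct; incidence with the attached direction is given by the $\parallel_\hipa$-class. Since every line of $\M$ which is not contained in $\hipa$ arises this way (and lines of partial linear spaces are determined by any two of their points), the reconstruction of this family is straightforward.

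\textbf{Step 3 (lines inside $\hipa$).}
This is the crux, and it is where Proposition \ref{prop:af2horlines} does the real work. Given three pairwise distinct points of $\hipa$, identified as classes $[L_1]_{\parallel_\hipa}$, $[L_2]_{\parallel_\hipa}$, $[L_3]_{\parallel_\hipa}$, the collinearity predicate $\colin$ is defined purely in terms of $\M\setminus\hipa$ by the second displayed equivalence in that proposition (existence of a triangle whose sides are parallel to chosen representatives). Under our hypotheses — $\M$ is Veblenian gamma with lines of size at least $3$, and $\hipa$ is flappy — Proposition \ref{prop:af2horlines} guarantees that this ternary relation coincides with the true collinearity of the three points of $\hipa$ in $\M$. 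The lines of $\hipa$ are then recovered as the maximal subsets of $\hipa$ any three distinct elements of which are $\colin$-related, equivalently as the equivalence classes of $\colin$ in the sense of \ref{prop:af2horlines}.

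\textbf{Where the hypotheses enter, and the main obstacle.}
The Veblenian and gamma-space assumptions, together with the lower bound on line size, are used solely to invoke Proposition \ref{prop:af2horlines}; the non-degeneracy of $\hipa$ is what, via Fact \ref{lem:prezerw:wystaw}\eqref{lem:prezerw:wystaw:i}, transfers flappy-ness to the component level and also ensures that every $p\in\hipa$ is genuinely realised as some $L^\infty$, so that Step 1 produces all of $\hipa$ and no less. The main obstacle in such reconstruction arguments is always Step 3: one has to be certain that the internally defined $\colin$ relation neither misses a true line of $\hipa$ nor creates a spurious one. Here this is not really new work, because the ``if and only if'' in Proposition \ref{prop:af2horlines} settles both implications — its $(\Rightarrow)$ part, which uses flappy to build the triangle, shows that no line of $\hipa$ is lost, and its $(\Leftarrow)$ part, which uses the gamma and Veblen conditions, shows that no spurious collinearities arise. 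Uniqueness of the reconstruction up to isomorphism is then reflected in (and in a sense confirmed by) Corollary \ref{cor:autext}: every automorphism of $\M\setminus\hipa$ extends uniquely to an automorphism of $\M$, so the reduction from $\M$ to $\M\setminus\hipa$ loses no structural information.
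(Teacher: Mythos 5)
Your reconstruction is correct and follows essentially the same route as the paper, whose proof of this proposition is literally an appeal to Proposition \ref{prop:af2horlines}: points of $\hipa$ as $\parallel_\hipa$-classes via spikiness, lines meeting the complement by reattaching their direction, and lines inside $\hipa$ via the triangle criterion. The only quibble is attributive: realising every $p\in\hipa$ as some $L^\infty$ comes from spikiness (hence from flappiness), not from non-degeneracy, which by Lemma \ref{lem:hipynieluskowate} is in fact already implied by the flappy hypothesis.
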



\subsubsection{Degenerate hyperplanes}

Degenerate hyperplanes are indeed defective from our view.

\begin{lem}\label{lem:hipynieluskowate}
  Degenerate hyperplanes of\/ $\M$ are not \luskwiaty.
\end{lem}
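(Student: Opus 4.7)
The plan is to unfold the definition of degenerate and show directly that flappiness fails. By Theorem \ref{thm:hip:inprod}, for every $a\in S$ and $i\in I$ the slice $\hipa^{[a]}_i$ is either a hyperplane in $\M_i$ or the whole of $S_i$. So calling $\hipa$ \emph{degenerate} means there exist $a\in S$ and $i\in I$ with $\hipa^{[a]}_i = S_i$, equivalently $\subst(a,i,S_i)\subseteq\hipa$. This gives a supply of lines entirely inside $\hipa$: for any line $l\in\lines_i$ (which exists by the partial linear space axioms) one has $L:=\subst(a,i,l)\subseteq\subst(a,i,S_i)\subseteq\hipa$. The line $L$ is the intended witness that $\hipa$ fails to be \luskwiaty.

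The main step is to verify that no point $p\notin\hipa$ can be adjacent to every point of $L$. Pick two distinct points $x_1,x_2\in L$; by construction $x_1$ and $x_2$ agree on every coordinate $j\neq i$ (and their $i$-th coordinates lie on $l$). Using the description of lines in the Segre product from Fact \ref{fct:prodpls:gener}, adjacency $p\adjac x_k$ means that $p$ and $x_k$ differ in exactly one coordinate, say $j_k\in I$. I would then run the following dichotomy: if $j_1=i$, then $p$ and $x_1$ agree off the $i$-th coordinate, hence $p=\subst(a,i,p_i)\in\subst(a,i,S_i)\subseteq\hipa$, contradicting $p\notin\hipa$; if $j_1\neq i$, then $p_i=(x_1)_i$, and applying the same analysis to $x_2$ gives either $p\in\subst(a,i,S_i)\subseteq\hipa$ again, or $p_i=(x_2)_i$, which contradicts $(x_1)_i\neq(x_2)_i$. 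Either way no such $p$ exists, so $\hipa$ is not \luskwiaty.

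The only genuinely delicate point is the case analysis on the adjacency coordinates $j_1,j_2$; everything else is immediate from the definitions, so I do not expect a real obstacle. No use of \kolczaty\ hypotheses, Veblen axioms, or of the nature of $\M_i$ beyond being a partial linear space is needed; the proof uses only the shape of lines in the Segre product together with the fact that $\subst(a,i,S_i)$ is swallowed entirely by $\hipa$ when $\hipa$ is degenerate.
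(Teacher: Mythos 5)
Your proof is correct and takes essentially the same route as the paper's: both pick a line $L=\subst(a,i,l)$ inside the swallowed slice $\subst(a,i,S_i)$ and show that any point outside $\hipa$ adjacent to two distinct points of $L$ would have to lie in that slice, hence in $\hipa$. The only difference is that the paper justifies this key step by citing Fact \ref{fct:prodpls:gener}\eqref{prodpls:triangle} (triangles in the product live in a single slice), whereas you verify the coordinate claim directly; your case analysis on $j_1,j_2$ is sound.
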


\begin{proof}
  Let $\hipa$ be a degenerate hyperplane of a Segre product.
  So, there are $a$ and $i$ such that $\hipa^{[a]}_i = S_i$,
  which means that
  $\subst(a,i,S_i) \subseteq \hipa$.
  Let $l\in\lines_i$, then
  $L = \subst(a,i,l)$
  is a line of the product contained in $\hipa$.
  In view of \ref{fct:prodpls:gener}\eqref{prodpls:triangle}
  a triangle in the product with
  $L$
  as one of its sides is contained in $\subst(a,i,S_i)$,
  so no point outside  $\hipa$ can be a vertex of such a triangle.
\end{proof}
There is quite natural construction of a hyperplane in the Segre
product as long as there are hyperplanes in all of the components. 
The outcome, however, is degenerate.
For hyperplanes $\hipa_i$ in $\M_i$, $i\in I$, we write
\begin{equation}\label{eq:degenhipa}
  \otimes_{i\in I}\hipa_i := \bigcup_{i\in I}
    (S_1\times\dots\times S_{i-1}\times\hipa_i\times S_{i+1}\dots).
\end{equation}
To shorten notation let us set $\hipa := \otimes_{i\in I}\hipa_i$.

\begin{prop}\label{prop:degenkolcz}
 The set\/ $\hipa$ is a degenerate and non-\kolczaty\ hyperplane in\/ $\M$.
\end{prop}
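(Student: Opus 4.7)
The plan is to verify the hyperplane property via Theorem \ref{thm:hip:inprod} by computing the slices $\hipa^{[a]}_i$ explicitly, and then to produce, for each claimed pathology (degeneracy and non-spikiness), an explicit witness point.

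First I would unfold the definition of $\otimes_{i\in I}\hipa_i$: a point $a=(a_1,a_2,\dots)$ lies in $\hipa$ iff there is some $j\in I$ with $a_j\in\hipa_j$. Fix $a\in S$ and $i\in I$ and compute $\hipa^{[a]}_i = \{x\in S_i : \subst(a,i,x)\in\hipa\}$. Two cases arise. If there exists $j\neq i$ with $a_j\in\hipa_j$, then $\subst(a,i,x)\in\hipa$ for every $x\in S_i$, so $\hipa^{[a]}_i=S_i$. Otherwise $a_j\notin\hipa_j$ for all $j\neq i$, in which case $\subst(a,i,x)\in\hipa$ iff $x\in\hipa_i$, giving $\hipa^{[a]}_i=\hipa_i$, a genuine hyperplane of $\M_i$. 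Since each $\hipa_j$ is a proper subspace, one may pick $a_j\notin\hipa_j$ for every $j$, and then $\hipa^{[a]}_i=\hipa_i\neq S_i$; this shows the ``some $a,i$'' clause of Theorem \ref{thm:hip:inprod} and hence that $\hipa$ is a hyperplane. The first case also exhibits slices equal to $S_i$, so $\hipa$ is degenerate by definition.

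For non-spikiness I would produce a point $b\in\hipa$ no neighbour of which lies outside $\hipa$. Each $\hipa_i$ is nonempty (a hyperplane meets every line of $\M_i$ and such a line exists), so pick $b_i\in\hipa_i$ for every $i\in I$ and set $b=(b_1,b_2,\dots)$. Clearly $b\in\hipa$. By construction of the Segre product every line of $\M$ through $b$ has the form $\subst(b,i,l)$ for some $i\in I$ and some line $l\in\lines_i$ passing through $b_i$; hence any point $c$ adjacent to $b$ differs from $b$ in at most the $i$-th coordinate and still satisfies $c_j=b_j\in\hipa_j$ for every $j\neq i$. Therefore $c\in\hipa$, so no line through $b$ leaves $\hipa$ and $b$ witnesses that $\hipa$ is non-spiky.

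There is no real obstacle in this proof; the only point that requires some care is making sure the witness for the hyperplane axiom in Theorem \ref{thm:hip:inprod} (a slice strictly smaller than $S_i$) is compatible with the witness for non-spikiness (a point all of whose coordinates are in the hyperplanes). These are genuinely different choices of $a$ and $b$, and the fact that both $\hipa_i$ and $S_i\setminus\hipa_i$ are nonempty for every $i$ makes both choices simultaneously available.
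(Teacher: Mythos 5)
Your proof is correct and follows essentially the same route as the paper: compute the slices $\hipa^{[a]}_i$ (which are $S_i$ or $\hipa_i$ depending on whether some other coordinate of $a$ already lies in its $\hipa_j$) to get the hyperplane and degeneracy claims, and exhibit a point whose neighbours cannot escape $\hipa$ for non-spikiness. The only cosmetic differences are that you invoke Theorem~\ref{thm:hip:inprod} where the paper checks directly that every line meets $\hipa$, and your non-spiky witness has \emph{all} coordinates in the component hyperplanes where the paper's needs only two.
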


\begin{proof}
  Let $a$ be a point of $\M$ and $L_i$ a line in $\M_i$ for some $i\in I$.
  Then $L = \subst(a,i,L_i)$ is a line in $\M$.
  There is a point $a'_i\in L_i\cap\hipa_i$ and thus
  $\subst(a,i,a'_i)$ is a common point of $L$ and $\hipa$.
  Hence $\hipa$ is a hyperplane of $\M$.

  Take a point $b$ with $b_i\in\hipa_i$.
  Then $\hipa^{[b]}_j=S_j$ for any $j\neq i$ and thus $\hipa$ is degenerate.
  Let $d$ be a point of $\M$ with $d_i\in\hipa_i$ for some $i=i_1,i_2\in I$, $i_1\neq i_2$. Clearly $d\in\hipa$.
  Let $d'$ be a point of
  the product collinear with $d$. So, $d'=\subst(d,i,d'_i)$ for some $i\in I$, $d'_i\adjac_i d_i$.
  It is easy to note, that $d'\in\hipa$ for any $i\in I$. Consequently, $\hipa$ is not \kolczaty.
\end{proof}

Observe that the points and the lines of the complement $\M\setminus\hipa$ coincide with 
the points and the lines of the product $\bigotimes_{i\in I} \struct{\M_i\setminus\hipa_i}$.
Since all complements $\M_i\setminus\hipa_i$ for $i\in I$ are partial affine partial linear spaces
by \ref{fct:afred:gener}\eqref{afredgen:papls}, we can apply \eqref{def:paral1:prod}
to define parallelism $\parallel$ on their product
$\bigotimes_{i\in I} \struct{S_i\setminus\hipa_i,\lines^\propto_i}$.
This parallelism however, is not compatible with the parallelism $\parallel_\hipa$ in the
complement $\M\setminus\hipa$. 
As $\hipa$ is a hyperplane introduced in \eqref{eq:degenhipa} the parallelism
$\parallel_{\hipa}$ in $\M\setminus\hipa$  is the relation $\prodparal$ given by
\eqref{def:paral2:prod} with $\parallel_i$=$\parallel_{\hipa_i}$.
This is the subject of the following statement.

\begin{prop}\label{prop:isomorph}
  \begin{sentences}\itemsep-2pt
  \item
    $\bstruct{S \setminus\hipa, \lines^\propto}\cong 
       \bigotimes_{i\in I} \bstruct{S_i\setminus\hipa_i,\lines^\propto_i}$.
  \item
    $\M\setminus\hipa=\bstruct{S \setminus\hipa, \lines^\propto, \parallel_\hipa}\cong 
        \bigl(\bigotimes_{i\in I} \struct{S_i\setminus\hipa_i,\lines^\propto_i}, \prodparal\bigr)$.
  \end{sentences}
\end{prop}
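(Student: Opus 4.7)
The approach is to exhibit both isomorphisms via the identity map on points. The starting point is the observation that, from \eqref{eq:degenhipa},
$$
  S\setminus\hipa \;=\; \bigtimes_{i\in I}(S_i\setminus\hipa_i),
$$
since $a\in\hipa$ iff $a_i\in\hipa_i$ for at least one $i\in I$, hence $a\notin\hipa$ iff $a_i\notin\hipa_i$ for every $i$. So there is a canonical bijection between the point sets of the two structures.

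For part (i), I would characterize the lines in $\lines^\propto$. Any line of $\M$ has the form $L=\subst(a,i,l)$ for some $a\in S$, $i\in I$, $l\in\lines_i$. The key observation is that $L\in\lines^\propto$ exactly when $a_j\notin\hipa_j$ for every $j\neq i$ \emph{and} $l\in\lines^\propto_i$. Indeed, if some $a_j\in\hipa_j$ with $j\neq i$, then every point of $L$ lies in the $j$-th slice of $\hipa$ and so $L\subseteq\hipa$; otherwise $\subst(a,i,x)\in\hipa$ iff $x\in\hipa_i$, whence $L\not\subseteq\hipa$ iff $l\not\subseteq\hipa_i$. But those are precisely the lines of $\bigotimes_{i\in I}\struct{S_i\setminus\hipa_i,\lines^\propto_i}$, after moving the $i$-th coordinate of $a$ into $l\cap(S_i\setminus\hipa_i)$ so that $a\in\bigtimes_j(S_j\setminus\hipa_j)$. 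Hence the identity carries lines to lines in both directions.

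For part (ii), the main computation is the explicit form of $L^\infty$: if $L=\subst(a,i,l)\in\lines^\propto$ with (WLOG) $a_j\notin\hipa_j$ for $j\neq i$, then the unique point of $L\cap\hipa$ equals $\subst(a,i,l^\infty)$, where $l^\infty$ is the unique point of $l\cap\hipa_i$. Given this, $L\parallel_\hipa K$ forces $L$ and $K$ to arise from the same component $\M_i$ (otherwise the equality $L^\infty=K^\infty$ would place some outer coordinate of $K$ inside $\hipa_i$, contradicting $K\in\lines^\propto$), to share all outer coordinates, and to satisfy $l^\infty=k^\infty$, i.e.\ $l\parallel_{\hipa_i}k$ or $l=k$. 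Reading this coordinate-wise (with singleton coordinates treated under the clause $L_j=K_j$) this is exactly the definition \eqref{def:paral2:prod} of $\prodparal$ with $\parallel_i\,=\,\parallel_{\hipa_i}$.

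The only nontrivial step is the line characterization in the first paragraph—everything else is unwinding definitions—and within that, the delicate point is verifying that for $L\in\lines^\propto$ the point at infinity depends only on $l^\infty$ (and not on the irrelevant $a_i$), which is what makes the two parallelisms agree.
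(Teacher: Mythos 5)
Your proposal is correct and follows essentially the same route as the paper: identify the point sets via $S\setminus\hipa=\bigtimes_{i\in I}(S_i\setminus\hipa_i)$, match the lines coordinate-wise, and compute $L^\infty=\subst(a,i,l^\infty)$ to show that $\parallel_\hipa$ coincides with $\prodparal$ built from the $\parallel_{\hipa_i}$. You merely spell out in more detail the line characterization and the "same component" step that the paper compresses into "It is seen that"; nothing is missing or different in substance.
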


\begin{proof}
  (i)
  It suffices to note that
  \begin{cmath}
    S \setminus\hipa=\bigtimes_{i\in I} S_i \setminus \bigcup_{i\in I}
  (S_1\times\dots\times S_{i-1}\times\hipa_i\times S_{i+1}\dots) =
  \bigtimes_{i\in I} (S_i \setminus\hipa_i).
  \end{cmath}

  (ii)
  Take $L,K\in \lines^\propto$ such that $L\parallel_{\hipa} K$.
  Then $L=\subst(a,i,L_i)$, $K=\subst(b,j,K_j)$ for some $a,b\in S \setminus\hipa$, 
  $L_i\in\lines_i^\propto$, $K_j\in\lines_j^\propto$, and $i,j\in I$.
  It is seen that
  \begin{cmath}   
    \subst(a,i,L_i^\infty)=L^{\infty}=K^{\infty}=\subst(b,j,K_j{^\infty})
  \end{cmath}
  which means that $i=j$, $a_s=b_s$ for all $s\neq i$, and $L_i\parallel_{\hipa_i} K_i$.
  This reasoning can be easily reversed.
\end{proof}


\subsection{Strong subspaces}

Directly from \ref{fct:prodpls:gener}\eqref{prodpls:gamma}, \ref{fct:prodpls:gener}\eqref{prodpls:veblen} 
and \ref{prop:strong:afprodPLS} we have

\begin{lem}\label{cor:affofprod:strong}
  Let\/ $\M_i$ be a Veblenian gamma space with lines of size at least 4 for $i\in I$ 
  and let\/ $\hipa$ be a hyperplane in\/ $\M$. 
  A set\/ $X\subseteq S$ is a strong subspace in\/ $\M\setminus\hipa$
  iff there is a strong subspace\/ $Y$ in\/ $\M$ such that\/ $X=Y\setminus\hipa$.
\end{lem}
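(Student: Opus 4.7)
The plan is simply to reduce the statement to Lemma \ref{prop:strong:afprodPLS}, which is the corresponding assertion for a single Veblenian gamma space rather than for a Segre product. So the proof is a one-line verification that the ambient space $\M$ inherits the three hypotheses of \ref{prop:strong:afprodPLS}.

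First I would check the Veblenian gamma condition: by Fact \ref{fct:prodpls:gener}\eqref{prodpls:gamma}, since every component $\M_i$ is a gamma space, the product $\M$ is a gamma space; by Fact \ref{fct:prodpls:gener}\eqref{prodpls:veblen}, since every $\M_i$ is Veblenian, so is $\M$. Next I would observe the line-size condition: every line $L$ of $\M$ has the form $\subst(a, i, l)$ for some $a\in S$, $i\in I$ and $l\in\lines_i$, and under the map $x\mapsto\subst(a,i,x)$ we have $|L|=|l|\geq 4$. Hence $\M$ itself satisfies the hypotheses of \ref{prop:strong:afprodPLS}.

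Finally, since $\hipa$ is by assumption a hyperplane in $\M$, applying \ref{prop:strong:afprodPLS} to the pair $(\M,\hipa)$ yields exactly the equivalence asserted: a set $X\subseteq S$ is a strong subspace of $\M\setminus\hipa$ iff $X=Y\setminus\hipa$ for some strong subspace $Y$ of $\M$.

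There is essentially no obstacle here; the lemma is a direct corollary. The only thing to be careful about is that \ref{prop:strong:afprodPLS} is stated for a general partial linear space $\M$ with the three properties (Veblenian, gamma, lines of size $\geq 4$), so the reduction is immediate once one invokes parts \eqref{prodpls:gamma} and \eqref{prodpls:veblen} of Fact \ref{fct:prodpls:gener} and notes that line sizes transfer unchanged from components to the product.
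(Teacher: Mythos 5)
Your proposal is correct and matches the paper exactly: the lemma is stated there as following ``directly from'' Fact \ref{fct:prodpls:gener}(iv), Fact \ref{fct:prodpls:gener}(v) and Lemma \ref{prop:strong:afprodPLS}, which is precisely the reduction you carry out. Your explicit remark that line sizes transfer unchanged from the components to the product is left implicit in the paper but is a correct and worthwhile detail to record.
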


This lets us get a more detailed characterization of strong subspaces 
in the complement of a product.
\begin{prop}\label{prop:affofprod:strong}
  Let\/ $\M_i$ be a Veblenian gamma space with lines of size at least 4 for $i\in I$ 
  and let\/ $\hipa$ be a hyperplane in\/ $\M$. 
  For $X\subseteq S$ the following conditions are equivalent.
  \begin{conditions}\itemsep-2pt
  \item
    $X$ is a strong subspace of the complement\/ $\M\setminus\hipa$.
  \item
    $X=\subst(a,i,Y_i)\setminus\hipa$ for some $a\in S$, $i\in I$, 
    and a strong subspace $Y_i$ in\/ $\M_i$.
  \item
    $X=\subst(a,i,X_i)$ for some $a\in S$, $i\in I$, 
    and a strong subspace $X_i$ in\/ $\M_i\setminus\hipa_i^{[a]}$.
  \end{conditions}
\end{prop}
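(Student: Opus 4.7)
The plan is to chain two previously established reductions: Lemma \ref{cor:affofprod:strong} relates strong subspaces of $\M\setminus\hipa$ to strong subspaces of $\M$, and Fact \ref{fct:prodpls:gener}\eqref{prodpls:strongsub} characterizes the latter as ``cylinders'' $\subst(a,i,Y_i)$ over strong subspaces $Y_i$ of a single component $\M_i$. Everything beyond that is coordinate-wise unpacking of definitions.

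For (i)$\Leftrightarrow$(ii) I would first apply \ref{cor:affofprod:strong} to rewrite (i) as ``$X = Y\setminus\hipa$ for some strong subspace $Y$ of $\M$''. Since the $\M_i$ are Veblenian gamma spaces, so is $\M$ by \ref{fct:prodpls:gener}\eqref{prodpls:gamma},\eqref{prodpls:veblen}; hence \ref{fct:prodpls:gener}\eqref{prodpls:strongsub} forces such a $Y$ to have the shape $\subst(a,i,Y_i)$ for some $a\in S$, $i\in I$ and a strong subspace $Y_i$ of $\M_i$, which is precisely (ii). The converse is the same two facts read in reverse.

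For (ii)$\Leftrightarrow$(iii) the pivot is the elementary slicing identity
\begin{cmath}
  \subst(a,i,A)\setminus\hipa \;=\; \subst\bigl(a,i,\,A\setminus\hipa_i^{[a]}\bigr)
  \quad\text{for every } A\subseteq S_i,
\end{cmath}
which is immediate from \eqref{eq:hipaslice}. Via this identity (ii) becomes $X = \subst(a,i,X_i)$ with $X_i := Y_i\setminus\hipa_i^{[a]}$. By Theorem \ref{thm:hip:inprod} the slice $\hipa_i^{[a]}$ is either a hyperplane in $\M_i$ or all of $S_i$; in the hyperplane case Lemma \ref{prop:strong:afprodPLS} applied inside the component $\M_i$ (still a Veblenian gamma space with lines of size at least $4$) converts strong subspaces $Y_i$ of $\M_i$ into strong subspaces $Y_i\setminus\hipa_i^{[a]}$ of $\M_i\setminus\hipa_i^{[a]}$, and conversely lifts any strong $X_i \subseteq \M_i\setminus\hipa_i^{[a]}$ to some strong $Y_i$ in $\M_i$ with $X_i = Y_i\setminus\hipa_i^{[a]}$, yielding both implications at once.

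The only step deserving a moment of care will be the degenerate slice case $\hipa_i^{[a]} = S_i$, where Lemma \ref{prop:strong:afprodPLS} cannot be invoked literally because $\M_i\setminus\hipa_i^{[a]}$ is empty. But then $\subst(a,i,Y_i)\subseteq\hipa$, so $X = \emptyset$ and the only candidate $X_i = \emptyset$ is vacuously a strong subspace of the empty space, so the equivalence survives this boundary case. Apart from noting this, no genuine obstacle remains.
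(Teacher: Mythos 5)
Your proof is correct and follows essentially the same route as the paper's: (i)$\Leftrightarrow$(ii) via Lemma \ref{cor:affofprod:strong} combined with Fact \ref{fct:prodpls:gener}\eqref{prodpls:strongsub}, and (ii)$\Leftrightarrow$(iii) via the coordinate slicing identity together with Lemma \ref{prop:strong:afprodPLS} applied inside the component $\M_i$. The paper arranges the same ingredients as a cycle (i)$\Rightarrow$(ii)$\Rightarrow$(iii)$\Rightarrow$(i) and dismisses the degenerate slice $\hipa_i^{[a]}=S_i$ with the observation that then $X=\emptyset$ and $X_i=\emptyset$ suffices, exactly as you do.
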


\begin{proof}
  (i)$\implies$(ii)
  According to \ref{cor:affofprod:strong} we have a strong subspace $Y$ in $\M$ 
  such that $X = Y\setminus\hipa$.
  By \ref{fct:prodpls:gener}\eqref{prodpls:strongsub} $Y=\subst(a,i,Y_i)$ for some $a\in S$, 
  $i\in I$, and a strong subspace $Y_i$ in $\M_i$. Thus 
  $X=\subst(a,i,Y_i)\setminus\hipa$.
  
  (ii)$\implies$(iii)
  For $X=\emptyset$ it suffices to take $X_i=\emptyset$, so assume that $X\neq\emptyset$.
  Hence $\hipa_i^{[a]}$ is a hyperplane in $\M_i$.
  Taking $X_i := Y_i\setminus\hipa_i^{[a]}$ we are through by \ref{prop:strong:afprodPLS}.
  
  (iii)$\implies$(i)
  Immediate by the definition of a strong subspace.
  \end{proof}
By \ref{fct:prodpls:gener}\eqref{prodpls:veblen} and \ref{fct:afred:gener} we infer

\begin{cor}\label{cor:strongasaffine}
  Let\/ $\M_i$ be a Veblenian gamma space with lines of size at least 4 for $i\in I$.
  If\/ $\hipa$ is a hyperplane in $\M$,
  then the complement\/ $\M\setminus\hipa$ satisfies the parallelogram
  completion condition and the Tamaschke Bedingung. Consequently, its
  strong subspaces are affine spaces.
\end{cor}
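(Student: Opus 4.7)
The statement splits naturally into two assertions, and the preamble to the corollary already tells us which earlier results to use.

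For the first assertion, the plan is to stack the two cited facts. Since each $\M_i$ is Veblenian, Fact \ref{fct:prodpls:gener}\eqref{prodpls:veblen} gives that the Segre product $\M$ is Veblenian. Then Fact \ref{fct:afred:gener}\eqref{afredgen:veb2tam} applied to $\M$ and the hyperplane $\hipa$ yields immediately that $\M\setminus\hipa$ satisfies both the parallelogram completion condition and the Tamaschke Bedingung. There is essentially nothing to compute here; the work has been done upstream.

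For the second assertion about strong subspaces, the plan is to combine the characterization of strong subspaces from Proposition \ref{prop:affofprod:strong} with part \eqref{afredgen:ls} of Fact \ref{fct:afred:gener}. Let $X$ be a strong subspace of $\M\setminus\hipa$. By \ref{prop:affofprod:strong}, $X$ has the form $\subst(a,i,X_i)$ where $X_i$ is a strong subspace of $\M_i\setminus\hipa_i^{[a]}$. A strong subspace of a partial linear space is itself a linear space (any two of its points are collinear), so the ambient strong subspace $Y_i$ in $\M_i$ with $X_i = Y_i\setminus\hipa_i^{[a]}$ is a linear space, and $\hipa_i^{[a]}\cap Y_i$ is a hyperplane of $Y_i$ (or all of $Y_i$, the latter case producing the empty $X_i$, which we may ignore). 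By Fact \ref{fct:afred:gener}\eqref{afredgen:ls} the complement $X_i = Y_i\setminus(\hipa_i^{[a]}\cap Y_i)$ is an affine linear space, and through the coordinate embedding $X = \subst(a,i,X_i)$ inherits this structure together with the induced parallelism from $\M\setminus\hipa$.

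It remains to note that $X$, being a substructure of $\M\setminus\hipa$, automatically satisfies the Tamaschke Bedingung and the parallelogram completion condition (both are universal conditions on configurations of lines, preserved under restriction to subspaces closed under the lines involved). Since an affine space is by the definition recalled in the preliminaries an affine linear space satisfying these two conditions, $X$ is an affine space. The only step requiring any care is matching the parallelism on $X$ inherited from $\M\setminus\hipa$ with the parallelism on $X_i$ coming from the hyperplane $\hipa_i^{[a]}$ of $\M_i$, but this is immediate from \eqref{eq:hparallelism} applied on both sides, since a line $\subst(a,i,l)$ in $X$ has direction $\subst(a,i,l^\infty)$. Thus the argument is essentially a bookkeeping chain through the previously established lemmas, and I do not foresee any substantive obstacle.
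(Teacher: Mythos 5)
Your proposal is correct and follows essentially the same route as the paper, which derives the corollary directly from Fact \ref{fct:prodpls:gener}\eqref{prodpls:veblen} together with Fact \ref{fct:afred:gener} (the paper offers no further detail). Your elaboration of the ``consequently'' clause via Proposition \ref{prop:affofprod:strong}, Fact \ref{fct:afred:gener}\eqref{afredgen:ls} and the restriction of the two universal conditions to strong subspaces is a faithful filling-in of the chain the paper leaves implicit.
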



\subsection{Parallelism in terms of incidence}\label{sec:parinterms}

Most of the time, also in a hyperplane complement of a Segre product,
parallelism can be defined in terms of incidence using the Veblen configuration
as follows. Let $L_1, L_2\in\lines$.
\begin{multline}\label{eq:veblparal}
  L_1 \veblparal L_2 \iff
     L_1 = L_2 \Lor
    \text{ there are lines } K_1, K_2 \text{ through a point } p
    \\
    \text {such that }
    p \not\in L_1, L_2 \Land L_1, L_2 \adjac K_1, K_2 \Land L_1 \not\adjac L_2.
\end{multline}
There is however parallelism typical to a Segre product.
\begin{multline}\label{eq:astparal}
  L_1 \astparal L_2 \iff
     \text{ there exists a quadrangle } p,q,r,s \text{ without diagonals }
     \\
     \text{ such that}\ L_1 = \LineOn(p,q) \Land L_2 = \LineOn(r,s).
\end{multline}
It is based on the fact that four lines $L_1, L_2, K_1, K_2$ form a quadrangle
without diagonals in $\M$ if they all arise as lines $l_1, l_2, k_1, k_2$ 
of some component $\M_i$ where $l_1, l_2, k_1, k_2$ form a quadrangle without 
diagonals, or they are from different components. If the latter is the case,
inspecting coordinates carefully one can easily see that the opposite sides
are always disjoint.
Note that if $\M_i$ are line spaces, then only the latter holds true.

In an affine space the parallelism can be defined in terms of the incidence.
The same can be done in the Segre product of affine spaces.
\begin{thm}\label{thm:defofparal}
  Let\/ $\A_i = \struct{S_i,\lines_i,\parallel_i}$ be an affine space and\/
  $\A'_i=\struct{S_i,\lines_i}$ for $i=1,2$.
  The parallelism $\parallel$ of the Segre product
  $\A_1\otimes\A_2$ can be defined in terms of the point-line incidence , i.e.\ in
  $\A'_1\otimes\A'_2$.
\end{thm}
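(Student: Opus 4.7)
The plan is to give a first-order formula, in the point-line incidence of $\A'_1\otimes\A'_2$, that defines $\parallel$. Unpacking (\ref{def:paral1:prod}) together with the fact that each line of the product arises from a unique component, $L\parallel K$ amounts to the following: both $L$ and $K$ arise from the same component $i\in\{1,2\}$, say $L=\subst(a,i,l)$ and $K=\subst(b,i,k)$, with $l\parallel_i k$ in $\A_i$. I propose to capture this by
\begin{equation*}
  L\approx K \iff L=K \;\Lor\; L\veblparal K \;\Lor\; L\astparal K \;\Lor\; (\exists M)\,(L\astparal M\Land M\veblparal K),
\end{equation*}
whose defining clauses are manifestly incidence-theoretic.

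The key step is to describe $\astparal$ and $\veblparal$ explicitly in $\A_1\otimes\A_2$. Since every line of the product changes exactly one coordinate, consecutive sides of a quadrangle must alternate between the two components, for otherwise three consecutive vertices would be collinear. Because $\A_i$ is a linear space the ``all-one-component'' option from the general discussion following (\ref{eq:astparal}) is vacuous, so every quadrangle without diagonals is a ``rectangle'' $(p_1,p_2),(q_1,p_2),(q_1,q_2),(p_1,q_2)$, whose opposite sides arise from a common $\A_i$-line at two distinct slices and whose diagonals differ on both coordinates and therefore are not lines. Consequently, $L\astparal K$ holds iff $L,K$ arise from a common $\A_i$-line at distinct slices. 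For $\veblparal$: any transversal meeting both $L$ and $K$ also changes only one coordinate, which forces $L,K$ to lie in a single strong subspace $\subst(a,i,S_i)\cong\A_i$ by Fact \ref{fct:prodpls:gener}\eqref{prodpls:strongsub}, and within such a slice (an affine space) the classical Veblen argument characterizes proper parallelism. Thus $L\veblparal K$ iff $L=K$ or $L,K$ are distinct properly parallel lines in one common slice.

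From these characterizations, $\approx\;\subseteq\;\parallel$ is immediate, each clause implying $\parallel$, which is transitive. For the converse, fix $L,K$ with $L\parallel K$ arising from component $i$ via parallel $\A_i$-lines $l,k$; a four-case split according to whether the slices agree and whether $l=k$ yields: $L=K$, or $L\veblparal K$, or $L\astparal K$, or (the crucial bridging case of distinct slices with $l\neq k$) the intermediate line $M:=\subst(b,i,l)$, which lies in the slice of $K$ along the $\A_i$-line of $L$, giving $L\astparal M\veblparal K$. The main obstacle is the ``rectangle rigidity'' of quadrangles without diagonals in the Segre product, where the structure of the product is fully exploited; once that is settled, the remaining verifications are a routine case analysis in an affine plane.
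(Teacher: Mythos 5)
Your overall strategy is the paper's own: express $\parallel$ as a combination of the two incidence-definable relations $\veblparal$ and $\astparal$, after identifying $\veblparal$ with parallelism inside a single slice and $\astparal$ with ``same component line at distinct slices''. Your final formula is correct, and in fact slightly more careful than the paper's claim (c): as literally stated, $\exists L_3(L_1\prodparal L_3\astparal L_2)$ misses the cases where $L_1,L_2$ lie in a common slice (there $L_3\astparal L_2$ pushes $L_3$ out of that slice while $L_1\prodparal L_3$ pulls it back in), and your explicit disjuncts $L=K$, $L\veblparal K$, $L\astparal K$ cover exactly those cases. The rectangle analysis of quadrangles without diagonals and the four-case split in the converse are sound (though the reason consecutive sides must alternate is that the two outer vertices of three consecutive ones would become adjacent, producing a diagonal, rather than that the three would be collinear).

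The one step that does not hold as written is your justification of the $\veblparal$ characterization: ``any transversal meeting both $L$ and $K$ also changes only one coordinate, which forces $L,K$ to lie in a single strong subspace.'' A single transversal forces no such thing. Take $L=\subst(a,1,l)$ and $K=\subst(b,2,k)$ with $a_2\in k$ and $b_1\notin l$; then any line $\subst(a,1,m)$ with $b_1\in m$ and $m$ meeting $l$ is a common transversal, yet $L$ and $K$ share no slice. Similarly, two lines of the same component in different slices admit a common transversal from the other component whenever their underlying lines meet. What actually excludes such pairs from $\veblparal$ is the full configuration of \eqref{eq:veblparal}: one must use that there are \emph{two} transversals through a common point $p$ with $p\notin L_1,L_2$, together with $L_1\not\adjac L_2$, and check the cases according to the components the two transversals arise from --- each case forces either $p\in L_1\cup L_2$ or $L_1\adjac L_2$. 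The conclusion you draw (that $\veblparal$ holds only for equal lines or for distinct, disjoint, coplanar --- hence parallel --- lines in one slice) is true, but the argument needs this repair before the rest of your proof goes through.
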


\begin{proof}
  Let $L_1,L_2$ be lines of the product $\A := \A_1\otimes\A_2$.
  It suffices to observe that in view of 
  \eqref{eq:veblparal} and \eqref{eq:astparal} both $\veblparal$ and $\astparal$ are
  definable in $\A'_1\otimes\A'_2$, and that the following three facts hold true.
  \begin{enumerate}[a)]
  \item
    $L_1 \prodparal L_2$ iff $L_1 \veblparal L_2$.
  \item
    If $L_1 \astparal L_2$, then $L_1 \parallel L_2$.
  \item
    $L_1,L_2$ are parallel in $\A$ iff
    there is a line $L_3$ with $L_1 \prodparal L_3 \astparal L_2$.
  \end{enumerate}
\end{proof}

To characterize $\parallel_\hipa$ a new parallelism comes in handy.
Let $L_1, L_2\in\lines^\propto$, then

\begin{multline}\label{eq:quadrparal}
  L_1 \quadrparal L_2 \iff
    \text{ there are lines } K_1, K_2, M_1, M_2 \text{ such that }
    \\
    K_1 \astparal K_2 \Land M_1, M_2, L_1 \adjac K_1, K_2 \Land
    L_2 \adjac M_1, M_2 \Land L_1 \not\adjac L_2.
\end{multline}

In plain words: $L_1 \quadrparal L_2$ iff 
$L_1, L_2$ are non-adjacent in $\M\setminus\hipa$, there is a quadrangle $Q$ 
without diagonals such that $L_1$ intersects the lines in one pair of the
opposite sides of $Q$, and $L_2$ intersects the other pair of sides.

\begin{lem}\label{lem:quadrparal}
  Assume that all the lines in $\lines_i$ are of size at least 4 for all $i\in I$
  and $\hipa$ is non-degenerate. If
  $L_1, L_2\in\lines^\propto$ are distinct lines through a point on $\hipa$ and
  there is no $b\in S$, $i\in I$ with\/ $L_1,L_2\subseteq \subst(b,i,S_i)$,
  then  $L_1 \quadrparal L_2$.
\end{lem}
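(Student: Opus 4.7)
The plan is to construct a ``Segre rectangle'' from $L_1$ and $L_2$ that witnesses $L_1\quadrparal L_2$. Since $L_1,L_2\in\lines^\propto$ are distinct and share a point of $\hipa$, that point must be the common infinity $p^\infty := L_1^\infty = L_2^\infty$. The assumption that no slice $\subst(b,k,S_k)$ contains both $L_1$ and $L_2$, together with the structure of lines in the Segre product, forces them to come from lines in two different factors, so I write $L_1=\subst(p^\infty,i,l_1)$ and $L_2=\subst(p^\infty,j,l_2)$ with $i\ne j$, $l_1\in\lines_i$, $l_2\in\lines_j$. Abbreviate $\alpha_0:=p^\infty_i$ and $\beta_0:=p^\infty_j$. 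Non-degeneracy makes $\hipa^{[p^\infty]}_i$ a hyperplane of $\M_i$ that meets $l_1$ only at $\alpha_0$, and symmetrically $\hipa^{[p^\infty]}_j\cap l_2=\{\beta_0\}$; applied at $\subst(p^\infty,i,\alpha)$ for each $\alpha\in l_1\setminus\{\alpha_0\}$, it also yields a unique $\gamma(\alpha)\in l_2\setminus\{\beta_0\}$ with $\subst(\subst(p^\infty,i,\alpha),j,\gamma(\alpha))\in\hipa$.

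Using $|l_i|\ge 4$, I would pick distinct $\alpha_1,\alpha_2\in l_1\setminus\{\alpha_0\}$ and distinct $\beta_1,\beta_2\in l_2\setminus\{\beta_0\}$ with $\beta_b\ne\gamma(\alpha_a)$ for all $a,b\in\{1,2\}$, and then set
\begin{cmath}
K_a := \subst(\subst(p^\infty,i,\alpha_a),j,l_2), \qquad M_b := \subst(\subst(p^\infty,j,\beta_b),i,l_1),
\end{cmath}
together with the four points $V_{ab}:=\subst(\subst(p^\infty,i,\alpha_a),j,\beta_b)$. By the choice of $\beta_b$ every $V_{ab}$ lies off $\hipa$; the four $V_{ab}$ are pairwise distinct and form a quadrangle whose consecutive sides are $K_1, M_2, K_2, M_1$. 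Its would-be diagonals $V_{11}V_{22}$ and $V_{12}V_{21}$ join vertices differing in both coordinates $i$ and $j$, and by Fact~\ref{fct:prodpls:gener}\eqref{prodpls:axiom} no line of $\M$ changes two coordinates at once, so the quadrangle has no diagonals: this gives $K_1\astparal K_2$.

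The remaining clauses of \eqref{eq:quadrparal} follow by direct coordinate inspection. One has $L_1\cap K_a=\{\subst(p^\infty,i,\alpha_a)\}$, off $\hipa$ since $\alpha_a\notin\hipa^{[p^\infty]}_i$; symmetrically $L_2\cap M_b=\{\subst(p^\infty,j,\beta_b)\}$ is off $\hipa$; and $M_b\cap K_a=\{V_{ab}\}$ is off $\hipa$ by construction. Hence $L_1, M_1, M_2 \adjac K_1, K_2$ and $L_2\adjac M_1, M_2$ in $\M\setminus\hipa$, and $L_1\cap L_2=\{p^\infty\}\subseteq\hipa$ yields $L_1\not\adjac L_2$ in $\M\setminus\hipa$. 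The main obstacle is the vertex-avoidance step. It relies on showing that $\gamma$ is injective on $l_1\setminus\{\alpha_0\}$: if $\gamma(\alpha)=\gamma(\alpha')=\beta^*$ for distinct $\alpha,\alpha'\in l_1\setminus\{\alpha_0\}$, then the line of $\M$ through the two distinct hyperplane points $\subst(\subst(p^\infty,i,\alpha),j,\beta^*)$ and $\subst(\subst(p^\infty,i,\alpha'),j,\beta^*)$ would lie entirely in $\hipa$, so that $\subst(p^\infty,j,\beta^*)\in L_2$ would be a second point of $L_2\cap\hipa$ distinct from $p^\infty$, contradicting $L_2^\infty=p^\infty$. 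Injectivity of $\gamma$ together with the bound $|l_i|\ge 4$ is what provides the freedom to choose the $\alpha_a$ and $\beta_b$ realising the avoidance condition, after which the rectangle construction goes through unencumbered.
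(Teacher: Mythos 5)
Your construction is the same one the paper uses: reduce to the two relevant coordinates, take as $K_1,K_2$ the two lines in the $L_2$-direction through points of $L_1$ off $\hipa$, as $M_1,M_2$ two lines in the $L_1$-direction meeting $L_2$ off $\hipa$, and check that the four intersection points form a rectangle which is a quadrangle without diagonals avoiding $\hipa$. Your explicit argument that $\gamma$ is injective is correct, and is a cleaner packaging of the paper's remark that each of the lines $(y_i,l_2)$ carries exactly one point of $\hipa$.

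The gap is the vertex-avoidance step, and your justification of it points the wrong way. You must find two \emph{distinct} heights $\beta_1,\beta_2$ in $l_2\setminus\{\beta_0,\gamma(\alpha_1),\gamma(\alpha_2)\}$. Precisely because $\gamma$ is injective and omits $\beta_0$, that excluded set has exactly three elements, so injectivity \emph{reduces} the available choices rather than ``providing the freedom''; you need $\abs{l_2}\ge 5$, whereas the lemma assumes only $\abs{l_2}\ge 4$. Moreover, every line running in the $L_1$-direction at a height $\beta\in l_2\setminus\{\beta_0\}$ must meet $\hipa$, and its only candidates for a point of $\hipa$ are the points $(\alpha,\beta)$ with $\beta=\gamma(\alpha)$; hence $\gamma$ is in fact a bijection of $l_1\setminus\{\alpha_0\}$ onto $l_2\setminus\{\beta_0\}$, and when $\abs{l_2}=4$ exactly one admissible height survives for \emph{every} choice of $\alpha_1,\alpha_2$, so the rectangle can never be completed. (Any line meeting both $K_1$ and $K_2$ is forced by coordinate inspection to be one of these horizontal lines, so the slack in \eqref{eq:quadrparal} --- that $M_1,M_2$ need not be sides of the quadrangle witnessing $K_1\astparal K_2$ --- does not obviously rescue the size-four case either.) To be fair, the paper's own proof picks $z\neq a_2,x$ with $(y_1,z),(y_2,z)\notin\hipa$, which amounts to the same four exclusions, so your argument faithfully reproduces the published one including its tight spot; but as written the claim that injectivity is what makes the choice possible is false, and the construction as it stands establishes the lemma only for lines of size at least $5$.
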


\begin{proof}
  Let $a\in\hipa$ and $a\in L_1\cap L_2$.
  Up to an order of variables we can assume that 
  $L_1 = (l_1,a_2,a_3,\dots)$ and $L_2 =(a_1,l_2,a_3,\dots)$ for some $l_1\in\lines_1$, $l_2\in\lines_2$. 
  For brevity, we omit the coordinates $a_3, a_4,\dots$, which does not affect our reasoning.
  Every point $(a_1,x)$ with $x\neq a_2$ is on $L_2\setminus\hipa$. 
  Let $l_2\ni x\neq a_2$. Then $(l_1,x)$ is a line
  through $(a_1,x)$, which is not contained in $\hipa$.
  If $(y',x)\in (l_1,x)\cap\hipa$ then all points $(y,x)$ on $(l_1,x)$ with $y\neq y'$
  are outside $\hipa$.
  Take $y_1,y_2\in l_1$ such that $y_1,y_2\neq y'$. Clearly the intersection points
  $(y_i,a_2)\in L_1 \cap (y_i,l_2)$ are outside $\hipa$ for $i=1,2$.
  There is exactly one point in $\hipa$ on the line $(y_i,l_2)$, so there is $z\in l_2$
  such that $z\neq a_2,x$ and $(y_i,z)\notin\hipa$.
  The line through the points $(y_1,z)$, $(y_2,z)$ intersects $L_2$ in a point $(a_1,z)$.
  Finally, the points $(y_1,x)$, $(y_1,z)$, $(y_2,x)$, $(y_2,z)$
  give a required quadrangle without diagonals and $L_1,L_2$ both cross their sides, as
  required.
\end{proof}

Two lines in $\M\setminus\hipa$ are parallel if they share a point on $\hipa$, and
there are two possibilities: they arise as lines of one hyperplane complement,
in one variable, or of two distinct hyperplane complements, in two distinct variables.
This observation makes the following fact immediate.

\begin{fact}\label{fct:parallelism}
  Let\/ $\hipa$ be non-degenerate and $L_1, L_2\in\lines^\propto$. 
  Then $L_1\parallel_\hipa L_2$ iff one of the following holds
   \begin{sentences}\itemsep-2pt
  \item\label{inone}
    there is $a\in S$, $i\in I$ such that $L_j=\subst(a,i,l_j)$ for some $l_j\in\lines_i$, $j=1,2$, and
    $l_1\parallel_{\hipa^{[a]}_i} l_2$, or
  \item
    there is no $a\in S$, $i\in I$ as in \eqref{inone} and
	$L_1\quadrparal L_2$.
  \end{sentences}  
\end{fact}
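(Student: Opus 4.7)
The plan is to split the equivalence into its two implications and, within each, to separate cases according to whether $L_1$ and $L_2$ arise from the same or from different components of the Segre product.

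For the forward direction, set $p := L_1^\infty = L_2^\infty \in \hipa$. Each $L_j$ arises as a unique line $l_j$ of a unique component $\M_{i_j}$ and passes through $p$, so $L_j = \subst(p, i_j, l_j)$ with $p_{i_j} \in l_j$. If $i_1 = i_2 =: i$, then both lines live in the slice $\subst(p, i, S_i)$; since $p \in \hipa$ we have $p_i \in l_1 \cap l_2 \cap \hipa^{[p]}_i$, and because $L_j \nsubseteq \hipa$ the point $p_i$ is the only intersection of $l_j$ with $\hipa^{[p]}_i$, so $l_1 \parallel_{\hipa^{[p]}_i} l_2$, giving (i). If $i_1 \ne i_2$, then no single slice contains both lines, so (i) fails; I would then invoke Lemma \ref{lem:quadrparal} on the pair $L_1, L_2$---distinct lines through a common point of $\hipa$ lying in no common slice---to obtain $L_1 \quadrparal L_2$, which is (ii).

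For the reverse direction, case (i) is immediate: from $l_1 \parallel_{\hipa^{[a]}_i} l_2$ one extracts a common point $q \in l_1 \cap l_2 \cap \hipa^{[a]}_i$, whence $\subst(a, i, q) \in L_1 \cap L_2 \cap \hipa$ is the required shared infinity point.

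The main obstacle is case (ii) of the reverse direction, where the quadrangle configuration defining $\quadrparal$ must be converted into a common point on $\hipa$. The key structural input is the description, quoted just before \eqref{eq:astparal}, of quadrangles without diagonals in the Segre product: either all four sides arise in one component $\M_i$, so the quadrangle lies in a single slice, or the sides alternate between two components $\M_i, \M_j$ with opposite sides arising from the same component. A short coordinate argument will show that in the single-slice case $L_1$ and $L_2$---each meeting two lines of that slice in its distinguished variable---would themselves be pinned to the slice, contradicting the hypothesis of (ii). Hence the quadrangle spans two components, with, say, $K_1, K_2$ arising in $\M_i$ and $M_1, M_2$ in $\M_j$. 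A second coordinate chase then forces $L_1$ to arise from $\M_j$ as the same line of $\M_j$ as $M_1, M_2$, and symmetrically $L_2$ to arise from $\M_i$ as the same line as $K_1, K_2$. Such a pair of lines automatically shares a unique point $r$ in $\M$; the hypothesis $L_1 \not\adjac L_2$ in $\M\setminus\hipa$ forces $r \in \hipa$, and the uniqueness of the infinity point of a line in $\lines^\propto$ yields $r = L_1^\infty = L_2^\infty$, completing the proof.
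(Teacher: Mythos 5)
Your proposal is correct in substance and follows exactly the route the paper intends: the paper states this fact without a written proof, calling it immediate from the observation that parallel lines of $\lines^\propto$ share their point at infinity on $\hipa$ and lie either in one slice or in two distinct variables, together with Lemma \ref{lem:quadrparal} and the preceding description of quadrangles without diagonals in the product; your write-up simply supplies the details of that argument, and the substantive part (the reverse implication in case (ii)) is handled the right way, by the coordinate analysis of a two-component quadrangle. One caveat you should make explicit: your ``second coordinate chase'' needs $M_1\neq M_2$, so that $L_2$ meets two \emph{disjoint} transversals in distinct points and is thereby pinned to the line $\LineOn(c_1,c_2)=\LineOn(x,x')$ of the first component; the formal definition \eqref{eq:quadrparal} does not literally require $M_1\neq M_2$, and if $M_1=M_2$ the implication $L_1\quadrparal L_2\Rightarrow L_1\parallel_\hipa L_2$ breaks down (an $L_2$ meeting the single transversal need not pass through $L_1^\infty$), so one must read the definition as the paper's informal gloss indicates, with $M_1,M_2$ the two sides of the other opposite pair.
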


The parallelism $\quadrparal$ can be expressed in terms of the point-line
incidence of $\M\setminus\hipa$ via \eqref{eq:quadrparal} and \ref{lem:quadrparal}. 
To be able to express
$\parallel_\hipa$ in terms of incidence we need to do so with
$\parallel_{\hipa^{[a]}_i}$. The problem is it depends not only on the variable
$i$ but also on $a\in S$. So, we need to distinguish those products $\M$
where every parallelism of any hyperplane complement in $\M_i$
can be defined by a single uniform formula in terms of the point-line incidence 
of that complement for all $i\in I$.
If that is the case \ref{fct:parallelism} is a half-way to express $\parallel_\hipa$
in terms of incidence. What is still missing is an incidence formula for two lines
being in one component of the product.
This is addressed by the next fact which follows from \ref{prop:affofprod:strong} and \cite{naumo}.

\begin{fact}\label{fct:prostewlistku}
  Let\/ $\M_i$ be a Veblenian gamma space with lines of size at least 4 
  and let\/ $\M_i\setminus\hipa^{[a]}_i$ be strongly connected for all $a\in S$, $i\in I$.
  Given two lines $L_1, L_2\in\lines^\propto$
  the following conditions are equivalent:
  \begin{conditions}\itemsep-2pt
  \item
    there is $a\in S$, $i\in I$ such that $L_1,L_2\subseteq\subst(a,i,S_i)$,
  \item
    there is a sequence $Y_0,\dots,Y_m$ of strong subspaces in\/ $\M\setminus\hipa$ 
    such that\/ $L_1\subseteq Y_0$, $L_2\subseteq Y_m$, and $Y_{i-1}, Y_{i}$ share a line for 
    $i=1,\dots,m$.  
  \end{conditions}
\end{fact}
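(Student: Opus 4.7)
The plan is to exploit the structural characterization of strong subspaces of $\M\setminus\hipa$ from Proposition \ref{prop:affofprod:strong}: every such subspace has the form $\subst(a,i,X)$ for some $a\in S$, $i\in I$, and a strong subspace $X$ of $\M_i\setminus\hipa^{[a]}_i$. Both implications then unfold naturally.

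For (i)$\Rightarrow$(ii), write $L_j=\subst(a,i,l_j)$ with $l_j\in\lines_i$ and $l_j\nsubseteq\hipa^{[a]}_i$. Since lines of $\M_i$ have at least $4$ points and a hyperplane meets a non-contained line in exactly one point, each $l_j\setminus\hipa^{[a]}_i$ is a strong subspace of $\M_i\setminus\hipa^{[a]}_i$ with at least three points. Strong connectedness of $\M_i\setminus\hipa^{[a]}_i$ supplies a chain of strong subspaces $Z_0,\ldots,Z_n$ with $Z_0=l_1\setminus\hipa^{[a]}_i$, consecutive terms meeting in at least two points, and $Z_n$ containing a point of $l_2\setminus\hipa^{[a]}_i$; one then closes the chain by appending $l_2\setminus\hipa^{[a]}_i$, possibly after inserting an auxiliary strong subspace that shares two points with both $Z_n$ and $l_2$. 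Since each $Z_k$ is strong, any two shared points lie on a whole shared line. Lifting via $Y_k:=\subst(a,i,Z_k)$ produces the required chain in $\M\setminus\hipa$.

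For (ii)$\Rightarrow$(i), write each $Y_k=\subst(a^k,i_k,X^k)$ by Proposition \ref{prop:affofprod:strong}. Observe from Fact \ref{fct:prodpls:gener}\eqref{prodpls:strongsub} that a line inside the slice $\subst(a^k,i_k,S_{i_k})$ can only arise as a line of $\M_{i_k}$, so its points agree on every coordinate except the $i_k$-th. Consequently, if $Y_{k-1}$ and $Y_k$ share a line $\ell$, a direct coordinate comparison forces $i_{k-1}=i_k$ and $a^{k-1}_s=a^k_s$ for every $s\neq i_k$; otherwise the two slices $\subst(a^{k-1},i_{k-1},S_{i_{k-1}})$ and $\subst(a^k,i_k,S_{i_k})$ would intersect in at most one point, ruling out a shared $\ell$. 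Iterating along the chain, all indices $i_k$ coincide with some common $i$ and all $a^k$ agree outside position $i$, hence $L_1\subseteq Y_0$ and $L_2\subseteq Y_m$ both lie in a single slice $\subst(a,i,S_i)$.

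The main obstacle is the technical gap in (i)$\Rightarrow$(ii): the definition of strong connectedness terminates the chain at a single point of $Y_n$, whereas we need the chain actually to cover the line $l_2$. Overcoming this demands a careful choice of the terminal strong subspace, or an explicit extra step, so that the next-to-last term shares two points, hence a line, with $l_2$. Everything else is bookkeeping built on Proposition \ref{prop:affofprod:strong} together with the fact that, inside a strong subspace, two points always determine a line.
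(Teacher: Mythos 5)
Your strategy is the one the paper intends: it offers no proof of this Fact beyond asserting that it ``follows from \ref{prop:affofprod:strong} and \cite{naumo}'', and your (ii)$\Rightarrow$(i) half is complete and correct --- every $Y_k$ in the chain contains a line (either $L_1$, $L_2$, or a line shared with a neighbour), so by \ref{prop:affofprod:strong} and \ref{fct:prodpls:gener} it lies in a uniquely determined slice $\subst(a^k,i_k,S_{i_k})$, and a shared line, whose points differ in exactly one coordinate, forces consecutive slices to coincide.

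The gap you flag in (i)$\Rightarrow$(ii) is, however, genuine and your sketch does not close it. The paper's definition of strong connectedness, applied to $X=l_1\setminus\hipa^{[a]}_i$ and a point $p\in l_2\setminus\hipa^{[a]}_i$, yields a chain $Z_0=X,\dots,Z_n$ with merely $p\in Z_n$; nothing forces $Z_n$ to meet $l_2\setminus\hipa^{[a]}_i$ in a second point, and the phrase ``possibly after inserting an auxiliary strong subspace that shares two points with both $Z_n$ and $l_2$'' is exactly the claim that needs proof: two strong subspaces meeting in a single point need not be bridged by a strong subspace meeting each in a line (compare two maximal singular subspaces of a polar space meeting in a point). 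Running the definition from the other end, with $X=l_2\setminus\hipa^{[a]}_i$ and $p\in l_1\setminus\hipa^{[a]}_i$, only reproduces the same defect at the opposite terminus. What the Fact actually requires is that all lines of $\M_i\setminus\hipa^{[a]}_i$ lie in a single class of the equivalence generated by ``share at least two points'' on its $\ge 2$-element strong subspaces; this is formally stronger than the paper's notion of strong connectedness, and it must be either taken as the hypothesis or verified separately for the concrete complements (projective, polar, Grassmann, polar Grassmann) to which the Fact is applied in Section~\ref{sec:apps}. So the missing terminal link is a substantive mathematical step, not bookkeeping, and as written your argument establishes only that the chain reaches a strong subspace through \emph{a point of} $L_2$, which is weaker than condition (ii).
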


This together with \ref{fct:parallelism} and the remarks below \ref{fct:parallelism} gives

\begin{prop}\label{prop:parallelglobal}
  Let\/ $\M_i$ be a Veblenian gamma space with lines of size at least 4 such that
  the parallelism of every hyperplane complement in\/ $\M_i$
  can be uniformly defined in terms of the point-line incidence of that complement for $i\in I$.
  Assume that $\hipa$ is non-degenerate and\/ $\M_i\setminus\hipa^{[a]}_i$ 
  is strongly connected for all $a\in S$, $i\in I$.
  Then the parallelism\/ $\parallel_\hipa$ in\/ $\M\setminus\hipa$ can be characterized 
  in terms of the point-line incidence of the product\/ $\M$.
\end{prop}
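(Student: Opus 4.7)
The plan is to reduce the definability of $\parallel_\hipa$ to three pieces, each incidence-definable, via the case distinction provided by Fact \ref{fct:parallelism}. Concretely, for any $L_1, L_2 \in \lines^\propto$ we have $L_1 \parallel_\hipa L_2$ iff either (a) there exist $a \in S$, $i \in I$ with $L_1, L_2 \subseteq \subst(a,i,S_i)$ and $L_1, L_2$ are parallel in the complement $\M_i \setminus \hipa^{[a]}_i$, or (b) no such pair $(a,i)$ exists and $L_1 \quadrparal L_2$. So it suffices to show that (i) the auxiliary relation $\quadrparal$ is incidence-definable, (ii) the predicate ``$L_1, L_2$ lie in a common slice'' is incidence-definable, and (iii) within a common slice, the parallelism $\parallel_{\hipa^{[a]}_i}$ is incidence-definable by a uniform formula.

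Item (i) is immediate from the defining formulas \eqref{eq:astparal} and \eqref{eq:quadrparal}, which refer only to collinearity and the existence of quadrangles without diagonals; together with Lemma \ref{lem:quadrparal} they yield the correct parallelism between lines in distinct slices under the non-degeneracy and thickness hypotheses. Item (ii) is delivered by Fact \ref{fct:prostewlistku}: $L_1, L_2$ share a slice iff there is a chain $Y_0, \ldots, Y_m$ of strong subspaces of $\M \setminus \hipa$ with $L_1 \subseteq Y_0$, $L_2 \subseteq Y_m$, and consecutive terms sharing a line; the strong subspaces of $\M \setminus \hipa$ are themselves incidence-definable (their form is pinned down by Proposition \ref{prop:affofprod:strong}, and by Corollary \ref{cor:strongasaffine} they are affine spaces intrinsically described by closure under line-completion of any pair of collinear points). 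Item (iii) follows from the hypothesis that the parallelism of every hyperplane complement in $\M_i$ admits a single uniform incidence definition $\varphi$ across $i \in I$: since the induced point-line structure on $\subst(a,i,S_i) \setminus \hipa$ is isomorphic to $\M_i \setminus \hipa^{[a]}_i$, evaluating $\varphi$ using only points and lines of the slice (equivalently, relativizing each quantifier of $\varphi$ to objects inside the strong-subspace chain identified in (ii)) recovers $\parallel_{\hipa^{[a]}_i}$.

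Combining these three pieces yields an incidence formula of the shape
$L_1 \parallel_\hipa L_2 \iff (\sigma \Land \varphi^{\mathrm{slice}}(L_1, L_2)) \Lor (\lnot \sigma \Land L_1 \quadrparal L_2)$,
where $\sigma$ is the chain sentence from (ii) and $\varphi^{\mathrm{slice}}$ is $\varphi$ relativized to the slice via $\sigma$. The principal obstacle is item (iii), namely the bookkeeping of the relativization: one must verify that restricting every quantifier of $\varphi$ to objects of the slice---rather than to all of $\M_i \setminus \hipa^{[a]}_i$---does not alter what $\varphi$ computes. The strong-connectedness assumption on each $\M_i \setminus \hipa^{[a]}_i$, channelled through Fact \ref{fct:prostewlistku}, guarantees that each slice is an intrinsically identifiable substructure of $\M \setminus \hipa$ (namely the union of strong subspaces linked by the chain-of-shared-lines relation anchored at a given line), so the uniform formula $\varphi$ transports unambiguously into the incidence language of $\M$, completing the characterization.
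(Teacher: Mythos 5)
Your proposal is correct and follows essentially the same route as the paper, which derives the proposition directly from the case split of Fact \ref{fct:parallelism} (same slice versus different slices), using \eqref{eq:quadrparal} together with Lemma \ref{lem:quadrparal} for the cross-slice case, Fact \ref{fct:prostewlistku} to make the predicate ``lying in a common slice'' incidence-definable, and the uniform-definability hypothesis for the in-slice case. Your additional care about relativizing the uniform formula to a slice is a reasonable elaboration of what the paper leaves implicit.
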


Example~\ref{exm:affinconnected} shows that a strongly connected space could 
turn out to be not connected after affinization. So, the assumption
that hyperplane complements in the components of the product are all strongly connected
is indispensable in \ref{fct:prostewlistku} and \ref{prop:parallelglobal}.

Proposition~\ref{prop:parallelglobal} is 
applicable in case of projective and polar spaces as the parallelism in
question is uniformly definable in affine spaces (a folklore) and in affine
polar spaces (cf.\ \cite{cohenshult}). We do not know however, if parallelism is
uniformly definable in affine Grassmann spaces (we guess so) and in affine polar
Grassmann spaces.


\subsection{Automorphisms}

Whether an automorphism of a hyperplane complement can be extended to  an
automorphism of the ambient space is one of the most common questions  when it
comes to affinization. We have discussed that for partial linear spaces in
Section \ref{sec:auty:pls} and now we are doing so for the Segre product.

\begin{thm}\label{thm:auty:prod}
  Let\/ $\M_i$ be a strongly connected partial linear space for $i\in I$  
  and let $\hipa$ be a \luskwiaty\ hyperplane in\/ $\M=\bigotimes_{i\in I}\M_i$.
  Set\/ $\A := {\M}\setminus\hipa$.
  The automorphisms of\/ $\A$ are the the automorphisms of\/ $\M$
  that preserve $\hipa$, restricted to the point set of\/ $\A$.
  More precisely, a map $f$ is an automorphism of\/ $\A$ iff there is a permutation
  $\sigma$ of\/ $I$ and a family of isomorphisms
  $f_i$ that map $\M_i$ onto
  $\M_{\sigma(i)}$
  such that the map $F$ defined by the condition
  $F(a)_i = f_i(a_i)$
  preserves $\hipa$ and\/ $f$ is the restriction of\/ $F$
  to the point set of\/ $\A$.
\end{thm}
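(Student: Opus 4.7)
My plan is to handle the two directions of the characterization. The backward direction is immediate: a map $F$ built coordinate-wise from isomorphisms $f_i$ sends lines of $\M$ to lines of $\M$ since, by construction of the Segre product, every line has the form $\subst(a,i,l)$ for some $l\in\lines_i$; if $F$ additionally preserves $\hipa$, then by Fact~\ref{fct:autyaf:gener}(i) its restriction to $S\setminus\hipa$ lies in $\Aut(\A)$.

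For the forward direction, take $f\in\Aut(\A)$. Since $\hipa$ is \luskwiaty, Lemma~\ref{prop:wystaje2kolczaty} makes $\hipa$ \kolczaty, and Fact~\ref{fct:autyaf:gener}(a) extends $f$ uniquely to a bijection $F\colon S\to S$ satisfying $F(x)=f(x)$ on $S\setminus\hipa$ and $F(L^\infty)=f(L)^\infty$ for $L\in\lines^\propto$. By construction $F(\hipa)=\hipa$. The remaining tasks are to show that $F$ sends lines of $\M$ to lines of $\M$, and that $F$ decomposes in the announced coordinate-wise manner.

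For adjacency, the only nontrivial case is two distinct points $p,q\in\hipa$ with $p\adjac q$. Put $L:=\LineOn(p,q)\subseteq\hipa$ and, by \luskwiaty, pick $b\notin\hipa$ with $L\subseteq[b]_{\mathord{\adjac}}$; the coordinate analysis of the Segre product carried out in the proof of Lemma~\ref{lem:hipynieluskowate} forces $b$ to lie in the same fiber $\subst(a,i,S_i)$ as $L$. Then $M_p:=\LineOn(b,p)$ and $M_q:=\LineOn(b,q)$ sit in that fiber and belong to $\lines^\propto$, so $f(M_p),f(M_q)$ are two lines of $\A$ through $f(b)$ with infinity points $F(p),F(q)$. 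The claim $F(p)\adjac F(q)$ reduces to showing that $f(M_p),f(M_q)$ again lie in a common fiber of $\M$, for then their infinity points are collinear on the line of that fiber lying in $\hipa$.

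The main obstacle is therefore to prove that $f$ preserves the fibration of $\M$ into the sets $\subst(a,i,S_i)$, and it is here that strong connectedness of the $\M_i$ enters. My plan is to characterize intrinsically in $\A$, in the spirit of Fact~\ref{fct:prostewlistku}, the relation ``two lines of $\A$ lie in a common fiber of $\M$'' by chains of strong subspaces meeting pairwise in at least two points; strong connectedness of each $\M_i$ guarantees that enough such chains exist within every fiber. Once $f$ is known to preserve fibers, it induces a permutation $\sigma$ of $I$ with $F\bigl(\subst(o,i,S_i)\bigr)=\subst(F(o),\sigma(i),S_{\sigma(i)})$ for any fixed basepoint $o$, and restriction of $F$ to individual fibers yields isomorphisms $f_i\colon\M_i\to\M_{\sigma(i)}$; a further appeal to strong connectedness ensures these $f_i$ are basepoint-independent and assemble globally to the coordinate-wise $F$.
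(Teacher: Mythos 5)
Your skeleton---extend $f$ to a bijection $F$ of $S$ using spikiness, prove $F$ is a collineation of $\M$, then decompose coordinate-wise---is the same as the paper's, and the backward direction and the extension step (via \ref{prop:wystaje2kolczaty} and \ref{fct:autyaf:gener}(ii)\eqref{fct:autyaf:gener:a}) are fine. The argument breaks at the reduction of $F(p)\adjac F(q)$ to the claim that $f(M_p)$ and $f(M_q)$ lie in a common fiber. Two distinct lines of $\A$ through the common point $f(b)$, each meeting $\hipa$ in a single point, can lie in one fiber $\subst(c,j,S_j)$ and still have non-adjacent points at infinity: inside the component $\M_j$, which is only assumed to be a strongly connected partial linear space, two points adjacent to $f(b)_j$ need not be adjacent to each other (in a generalized quadrangle, for instance, the two intersection points of two distinct lines through a point with a geometric hyperplane are never collinear). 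So ``same fiber'' does not yield adjacency of the infinity points; and even granting pairwise adjacency you would still have to show that the whole line $L\subseteq\hipa$ is mapped onto a line, not merely that adjacency is preserved. Closing exactly this gap is why the paper routes the argument through the near-plane conditions \eqref{zal:przecinanie} and \eqref{zal:nearlyluskw} of \ref{fct:autyaf:gener}(ii)\eqref{fct:autyaf:gener:b}, verified in \ref{lem:veblgamma} using the none-one-or-all axiom and the Veblen condition: a near-plane recovers $L$ all at once as $\Pi(b,K)\cap\hipa$.

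Your fiber-preservation plan is also underpowered. You want to characterize ``two lines of $\A$ lie in a common fiber'' by chains of strong subspaces of $\A$ in the spirit of \ref{fct:prostewlistku}, but that fact requires the components to be Veblenian gamma spaces with lines of size at least $4$ and, crucially, requires the complements $\M_i\setminus\hipa^{[a]}_i$---which are the fibers of $\A$---to be strongly connected; strong connectedness of $\M_i$ does not survive affinization (Example~\ref{exm:affinconnected}), and the correspondence between strong subspaces of $\A$ and of $\M$ (\ref{prop:strong:afprodPLS}) again needs the Veblenian gamma hypotheses. The paper sidesteps all of this by performing the decomposition in $\M$ rather than in $\A$: once $F\in\Aut(\M)$ is established, \cite[Proposition~1.10]{naumo} is applied to the full fibers $\subst(a,i,S_i)\cong\M_i$, where strong connectedness of the $\M_i$ is exactly the hypothesis available, and it delivers $\sigma$ and the $f_i$ in one stroke. (To be fair, the paper's own middle step quietly uses that the $\M_i$ are Veblenian gamma spaces with lines of size at least $3$, which the stated hypotheses do not grant either; but your route would need the same extra assumptions at the same spot, so it gains no generality.)
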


\begin{proof}
 Let $f \in \Aut(\A)$.
 By \ref{prop:wystaje2kolczaty} the hyperplane $\hipa$ is spiky, so according
 to \ref{fct:autyaf:gener}(ii)\eqref{fct:autyaf:gener:a} we have the extension
 $F$ of $f$ to the point set of $\goth N$ such that
 $F(L^\infty) = f(L)^\infty$ for every line $L$ of $\A$.
 As $\hipa$ is \luskwiaty\ in view of \ref{fct:prodpls:gener}\eqref{prodpls:gamma},
 \ref{fct:prodpls:gener}\eqref{prodpls:veblen} and \ref{lem:veblgamma}
 we can apply \ref{fct:autyaf:gener}(ii)\eqref{fct:autyaf:gener:b}. Hence
 $F$ is a collineation of $\goth N$ that preserves $\hipa$.
 Now, from \cite[Proposition~1.10]{naumo} the required $\sigma$ and $f_i$ exist.
\end{proof}

%
\section{Applications and examples: products of partial linear spaces embeddable into projective spaces and their affinizations}\label{sec:apps}

Let $\M := \bigotimes_{i\in I}\M_i=\struct{S,\lines}$ be the Segre product of
partial linear spaces with a hyperplane $\hipa$.
If $\hipa$ is given by \eqref{eq:degenhipa}, then in view of \ref{prop:isomorph}(i) 
the complement $\M\setminus\hipa$ is isomorphic to the product of
hyperplane complements, when both of them are considered as incidence
structures without parallelism. It need not to be true however, in case of other
affinizations. 
From \ref{rem:reduct:notAPLS} we know that if $\hipa$ is spiky, then the
complement $\M\setminus\hipa$ is not an affine space. 

The family of strong subspaces in the hyperplane complement
$\M\setminus\hipa$ will be written as
\begin{equation}\label{eq:sc}
  \covering(\M\setminus\hipa) := \Bset{\subst(a,i,X_i)\colon  a\in S,\ i\in I,\
    X_i \text{ is a strong subspace of } \M_i\setminus\hipa^{[a]}_i}.
\end{equation}
A straightforward outcome of \ref{prop:affofprod:strong} and
\ref{cor:strongasaffine} is as follows
\begin{fact}\label{fact:covering}
  If all\/ $\M_i$ are Veblenian gamma spaces with lines of size at least 4, 
  then $\covering(\M\setminus\hipa)$ 
  is a covering of the hyperplane complement\/ $\M\setminus\hipa$
  by affine spaces, i.e.\ $S\setminus\hipa = \bigcup\covering(\M\setminus\hipa)$.
\end{fact}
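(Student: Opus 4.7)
The plan is to assemble this statement from the structural results already proved, namely \ref{prop:affofprod:strong} and \ref{cor:strongasaffine}. First I would observe that, under the present hypotheses (all $\M_i$ Veblenian gamma spaces with lines of size at least $4$), Proposition~\ref{prop:affofprod:strong} tells us that the strong subspaces of $\M\setminus\hipa$ are exactly the sets of the form $\subst(a,i,X_i)$ where $X_i$ is a strong subspace of $\M_i\setminus\hipa^{[a]}_i$. That is precisely the family $\covering(\M\setminus\hipa)$ defined in \eqref{eq:sc}. Next, by \ref{fct:prodpls:gener}\eqref{prodpls:gamma} and \eqref{prodpls:veblen}, the Segre product $\M$ is itself a Veblenian gamma space with lines of size at least $4$, so \ref{cor:strongasaffine} applies and every strong subspace of $\M\setminus\hipa$ — hence every member of $\covering(\M\setminus\hipa)$ — is an affine space.

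It then remains only to check the covering identity $S\setminus\hipa=\bigcup\covering(\M\setminus\hipa)$. The inclusion $\supseteq$ is immediate: if $X_i\subseteq S_i\setminus\hipa^{[a]}_i$, then the definition of $\hipa^{[a]}_i$ forces $\subst(a,i,X_i)$ to avoid $\hipa$. For $\subseteq$, take any $p\in S\setminus\hipa$ and fix an arbitrary $i\in I$; since $p\notin\hipa$ we have $p_i\notin\hipa^{[p]}_i$, so $X_i:=\{p_i\}$ is (trivially) a strong subspace of $\M_i\setminus\hipa^{[p]}_i$, whence $\{p\}=\subst(p,i,\{p_i\})$ is a member of $\covering(\M\setminus\hipa)$ containing $p$.

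There is really no hard step here: the statement is a direct consequence of the two results cited, and the only cosmetic issue is whether one is content to cover $p$ by the singleton $\{p\}$. If not, one may instead pick any line $l$ of $\M_i$ through $p_i$; because $p_i\notin\hipa^{[p]}_i$, the hyperplane $\hipa^{[p]}_i$ meets $l$ in exactly one point, so $l\setminus\hipa^{[p]}_i$ is a line of $\M_i\setminus\hipa^{[p]}_i$ through $p_i$ and $\subst(p,i,l\setminus\hipa^{[p]}_i)$ is a genuine affine line of $\M\setminus\hipa$ in $\covering(\M\setminus\hipa)$ through $p$.
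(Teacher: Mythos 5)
Your proof is correct and follows exactly the route the paper intends: it derives the statement from Proposition~\ref{prop:affofprod:strong} (identifying $\covering(\M\setminus\hipa)$ with the strong subspaces of the complement) and Corollary~\ref{cor:strongasaffine} (these are affine spaces), which is all the paper itself offers, since it labels the fact ``a straightforward outcome'' of those two results. Your explicit check of the covering identity, including the refinement that replaces the singleton $\{p\}$ by an affine line through $p$, is a welcome elaboration of what the paper leaves implicit.
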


As we are interested in affine-like Segre products, due to \ref{fact:covering}
we will investigate products of some analytical Veblenian gamma spaces:
projective spaces, polar spaces, Grassmann spaces,  and polar Grassmann spaces.
All of them are strongly connected.  Thus, \ref{prop:parallelglobal} and
\ref{thm:auty:prod} can be applied as far as there are non-degenerate or
\luskwiaty\ hyperplanes in these spaces. Constructions of hyperplanes with such
properties will be established for products of spaces that are embeddable into
a projective space.
We focus on geometries of common types, although hyperplanes are
also known in  many other embeddable spaces (cf. \cite{ronan}).


\subsection{Algebraic background}

Let $V$ be a (left) vector space over a division ring $D$.
The set of all subspaces of $V$ will be written as $\Sub(V)$
and the set of all $k$-dimensional subspaces as $\Sub_k(V)$. 
For $H\in\Sub_{k-1}(V)$ and $B\in\Sub_{k+1}(V)$ with $H\subseteq B$ a
\emph{$k$-pencil} is the set
$$
  \penc(H,B) := \bset{U\in\Sub_k(V)\colon H\subseteq U\subseteq B}.
$$
Taking $k$-subspaces as points and $k$-pencils as lines we get a \emph{Grassmann space}
$$
  \PencSpace(k,V) := \bstruct{\Sub_k(V), {\cal P}_k(V)}.
$$
For $k=1$, and dually for $k=n-1$ when $V$ is of finite dimension $n$, 
$\PencSpace(k,V)$ is a projective space, while for $1 < k < n - 1$ there are
non-collinear points in $\PencSpace(k,V)$, so it is a proper partial linear space.
It is worth to mention that $\PencSpace(k,V) = \PencSpace(k-1,\PencSpace(1, V))$.

Given a reflexive bilinear form $\xi$ on $V$, we write $Q_k(\xi)$ for the set of all 
isotropic $k$-subspaces of $V$ w.r.t.\ $\xi$.
If $H\in\Sub_{k-1}(V)$, $B\in Q_{k+1}(\xi)$, and $H\subseteq B$ (actually we have $H\in Q_{k-1}(\xi)$), 
then we get an \emph{isotropic $k$-pencil} 
$$
  \penc_\xi(H,B) := \penc(H,B)\cap Q_k(\xi).
$$
Taking isotropic $k$-subspaces as points and isotropic $k$-pencils as lines 
we get a \emph{polar Grassmann space} (cf.\ \cite{polargras})
$$
  \PencSpace(k,\xi) := \bstruct{Q_k(\xi), {\cal G}_k(\xi)}.
$$
It is embedded in the Grassmann space $\PencSpace(k,V)$ in a natural way, so
that the points and lines of $\PencSpace(k,\xi)$ are the points and lines of 
$\PencSpace(k,V)$ respectively. Note that $\PencSpace(1,\xi)$ is a polar space
and $\PencSpace(k,\xi) = \PencSpace(k-1, \PencSpace(1,\xi))$.

Recall that the map
\begin{cmath}
  {\bf g}\colon \gen{u_1,\dots,u_k} \mapsto \gen{u_1\wedge\dots\wedge u_k}
\end{cmath}
provided that $D$ is a field,
is the well known {\em Grassmann embedding} (sometimes called also the
Pl{\"u}cker embedding) of the Grassmann space $\PencSpace(k,V)$ into the projective space 
$\PencSpace(1, {\bigwedge^k V})$.


\subsection{Hyperplanes arising from Segre embeddings}

Let $V_i$ be a vector space over a field $D$ of characteristic not 2 for $i=1,\dots,n$ and let 
$k = k_1+\dots +k_n$ for some positive integers $k_1,\dots, k_n$. 
For brevity of notation we apply a convention that $u^i = [u^i_1,\dots,u^i_{k_i}]\in V_i^{k_i}$
and $u = (u^1,\dots,u^n)$ for $u\in \bigtimes_{i=1}^n V_i^{k_i} =: V$.
Here, we 
investigate the Segre product
\begin{equation}\label{eq:mixprod}
  \M = \M_{k_1,\dots,k_n}(V_1,\dots,V_n) := \PencSpace(k_1,V_1)\otimes\dots\otimes\PencSpace(k_n,V_n).
\end{equation}
Consider a mapping $\mu\colon V\lto D$ that is semilinear and alternating on every 
of $n$ segments w.r.t.\ $k_1,\dots,k_n$, i.e.\ with the property that
\begin{cmath}
  \mu(u_1,\dots,\alpha u_i,\dots,u_k)= \alpha^{\sigma_i}\mu(u_1,\dots, u_i,\dots,u_k)
\end{cmath}
for some automorphism $\sigma_i$ of $D$ and any $\alpha\in D$, and
\begin{cmath}
  \mu(u_1,\dots,u_{j_1},\dots,u_{j_2},\dots,u_k) = - \mu(u_1,\dots,u_{j_2},\dots,u_{j_1},\dots,u_k)
\end{cmath}
for all $i=1,\dots,n$ and $j_1, j_2$ such that $k_1+\dots+k_{i-1}<j_1<j_2\le k_1+\dots+ k_i$. We shall say
that $\mu$ is \emph{segment-wise} semilinear and alternating. Note that $\sigma_{j_1} = \sigma_{j_2}$ 
for $j_1, j_2$ within one segment like above. Thus there could be up to $n$ field automorphisms $\sigma_i$
associated with $\mu$.

For $u\in V$ define a map $\mu_i^{[u]}\colon V_i^{k_i}\lto D$ by setting
$$
  \mu_i^{[u]}(x^i) := \mu(u^1,\dots,u^{i-1},x^i,u^{i+1},\dots,u^n).
$$
It is an alternating $k_i$-semilinear form on $V_i$ associated with some field automorphism $\sigma_i$.
For every map $\mu_i^{[u]}$ there is an alternating $k_i$-linear form $\eta$ on $V_i$ with its zero-set
equal to that of $\mu_i^{[u]}$,
i.e. $\mu_i^{[u]}(x_1,\dots,x_{k_i})=0$ iff $\eta(x_1,\dots,x_{k_i})=0$ for all $x_1,\dots,x_{k_i}\in V_i$.
A $k$-linear form $\mu'$ such that the zero-sets of $\mu$ 
and $\mu'$ coincide exists only if $\sigma_1=\dots=\sigma_n$. This justifies not taking
$\mu$ to be simply $k$-linear.

In case $\sigma_1=\dots=\sigma_n=\id$, that is when $\mu$ is $k$-linear, it determines 
an $n$-linear form $\mu^\ast$ on 
$\bigl(\bigwedge^{k_1} V_1\bigr)\otimes\dots\otimes\bigl(\bigwedge^{k_n} V_n\bigr)$ in a standard way
as follows
\begin{equation}\label{eq:muast}
  \mu^\ast(u^1_1\wedge\dots\wedge u^1_{k_1}\otimes\dots\otimes u^n_1\wedge\dots\wedge u^n_{k_n}) := \mu(u^1,\dots,u^n),
\end{equation}
where $u^i\in V_i^{k_i}$ for $i=1,\dots,n$.

It is known that every hyperplane in the projective space $\PencSpace(1, V)$
is of the form $\Ker(\eta)$ for some linear form or a covector $\eta\in V^\ast$, and indeed
for $n=k=1$ we have $\mu\in V^\ast$.
A standard embedding $\bf s$ of the product $\bigotimes_{i=1}^{n}\PencSpace(1, V_i)$
into the projective space $\PencSpace(1, {\bigotimes_{i=1}^n V_i})$
given by
\begin{equation}\label{eq:segreembed}
  {\bf s}\colon(\gen{w_1},\dots,\gen{w_n}) \longmapsto \gen{w_1\otimes\dots\otimes w_n}.
\end{equation}
is called a \emph{Segre embedding}. 
Let us define
\begin{multline}\label{eq:def:hipamu}
  \hipa_{k_1,\dots,k_n}(\mu) := \bigl\{ \bigl(\gen{u^1},\dots, \gen{u^n}\bigr) 
    \in\Sub_{k_1}(V_1)\times\dots\times\Sub_{k_n}(V_n) \colon \\
      \mu(u^1,\dots,u^n) = 0 \bigr\}.
\end{multline}
Since $n, k, k_1,\dots,k_n$ are all fixed we will abbreviate $\hipa_{k_1,\dots,k_n}(\mu)=\hipa(\mu)$ as it should
cause no confusion.
For $k$-linear $\mu$ we have
  $$\hipa(\mu) = {\bf s}^{-1}({\bf g}_1^{-1}\times\dots\times{\bf g}_n^{-1})(\Ker(\mu^\ast)).$$
For all $u\in V$ such that 
$U:= \bigl(\gen{u^1},\dots,\gen{u^n}\bigr)\in\Sub_{k_1}(V_1)\times\dots\times\Sub_{k_n}(V_n)$ 
by \eqref{eq:hipaslice} and \eqref{eq:def:hipamu} we have
\begin{multline}\label{eq:auxC}
  \hipa\bigl(\mu_i^{[u]}\bigr) = 
    \bset{ \gen{x^i}\in\Sub_{k_i}(V_i)\colon \mu_i^{[u]}(x^i) = 0 } = 
     \\
        \bset{ X\in\Sub_{k_i}(V_i)\colon U[i/X] \in \hipa(\mu) } = 
          \hipa(\mu)_i^{[U]},
\end{multline}
so by \ref{thm:hip:inprod} the following is evident.

\begin{prop}\label{prop:hipamu}
  The set $\hipa(\mu)$ is either a hyperplane in $\M$ or all of\/ $\M$.
\end{prop}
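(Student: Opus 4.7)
The plan is to apply Theorem~\ref{thm:hip:inprod}: $\hipa(\mu)$ is a hyperplane in $\M$ exactly when every slice $\hipa(\mu)^{[a]}_i$ is either a hyperplane in $\M_i = \PencSpace(k_i,V_i)$ or all of $S_i$, and at least one slice is a proper hyperplane. By \eqref{eq:auxC} each such slice equals $\hipa(\mu_i^{[u]})$, the vanishing locus on $\Sub_{k_i}(V_i)$ of an alternating $k_i$-semilinear form on $V_i$. Thus the proposition reduces to the claim that for any alternating $k$-semilinear form $\phi$ on a vector space $W$ (with field automorphism $\sigma$), the set
\begin{cmath}
H_\phi := \bset{\gen{x_1,\dots,x_k}\in\Sub_k(W)\colon \phi(x_1,\dots,x_k) = 0}
\end{cmath}
is a hyperplane in $\PencSpace(k,W)$ whenever $\phi\not\equiv 0$, and is all of $\Sub_k(W)$ otherwise.

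To establish the claim, first I would verify that $H_\phi$ is well-defined, i.e.\ independent of the choice of basis of the representing subspace: the change-of-basis calculation via multi-semilinearity and alternation yields $\phi(u'_1,\dots,u'_k) = \sigma(\det M)\cdot\phi(u_1,\dots,u_k)$ for the transition matrix $M$, and since $\sigma$ is an automorphism the vanishing is basis-independent. Next I would inspect an arbitrary pencil $\penc(H',B)$ by fixing a basis $e_1,\dots,e_{k-1}$ of $H'$ and a complement $\gen{v_1,v_2}$ of $H'$ in $B$. The vanishing along the pencil reduces to the single semilinear equation $c_1\alpha^\sigma + c_2\beta^\sigma = 0$ in the homogeneous coordinates $[\alpha:\beta]$, where $c_j := \phi(e_1,\dots,e_{k-1},v_j)$. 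Either both $c_j = 0$ and the entire pencil lies in $H_\phi$, or the equation has a unique projective solution and the pencil meets $H_\phi$ in exactly one point. Hence $H_\phi$ is a subspace and meets every pencil, so it is either all of $\Sub_k(W)$ or a hyperplane of $\PencSpace(k,W)$.

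With the claim in hand the dichotomy in the proposition follows: if every slice $\hipa(\mu)^{[a]}_i$ equals $S_i$, then for any $a\in S$ the inclusion $a\in\subst(a,i,S_i)\subseteq\hipa(\mu)$ forces $\hipa(\mu) = S$; otherwise some slice is a proper hyperplane and Theorem~\ref{thm:hip:inprod} delivers that $\hipa(\mu)$ is a hyperplane in $\M$.

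The main obstacle is the alternating-semilinear bookkeeping inside the pencil computation. A cleaner alternative is to invoke the text's observation that $\mu_i^{[u]}$ admits an alternating $k_i$-linear form $\eta$ with the same zero set, extend $\eta$ via \eqref{eq:muast} to a linear form $\eta^\ast$ on $\bigwedge^{k_i}V_i$, and use that the Grassmann embedding ${\bf g}$ sends pencils of $\PencSpace(k_i,V_i)$ to projective lines in $\PencSpace(1,\bigwedge^{k_i}V_i)$. Then $H_\eta = {\bf g}^{-1}(\Ker\eta^\ast)$, and since any projective hyperplane meets any projective line in one or all points, the claim follows at once.
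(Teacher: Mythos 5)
Your proposal follows the paper's own route: the identification \eqref{eq:auxC} of the slices with $\hipa\bigl(\mu_i^{[u]}\bigr)$ followed by an appeal to Theorem~\ref{thm:hip:inprod}, including the observation that if every slice equals $S_i$ then $\hipa(\mu)=S$. The only difference is that you spell out the single-Grassmannian step (that the zero locus of a non-zero alternating segment form is a hyperplane of $\PencSpace(k_i,V_i)$) via the pencil computation, a detail the paper leaves as ``evident''; both that calculation and your alternative via the Grassmann embedding are correct.
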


We say that $\mu$ is \emph{non-zero on $i$-th segment} when for all $u\in V$ such that
\begin{equation}\tag{$\ast_i$}\label{eq:ast}
  u^j \text{ is a linearly independent system in } V_j, \text{ where } 1\le j\le n,\; j\neq i
\end{equation}
there is $x^i\in V_i^{k_i}$ with $\mu_i^{[u]}(x^i)\neq 0$.
Note that $u^j$ is linearly independent iff $u^j_1\wedge\dots\wedge u^j_{k_j}\neq 0$ in \eqref{eq:muast}.
Moreover, the system $u^j$ must be linearly independent to have $\gen{u^j}\in\Sub_{k_j}(V_j)$ in \eqref{eq:def:hipamu}.

\begin{prop}\label{fct:mu}
  If the form $\mu$ is non-zero on at least one of $n$ segments, then $\hipa(\mu)$
  is a hyperplane in $\M$. 
  If $\mu$ is non-zero on all $n$ segments, then $\hipa(\mu)$
  is a non-degenerate hyperplane in $\M$.
\end{prop}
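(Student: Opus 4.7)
The plan is to apply Proposition \ref{prop:hipamu} together with Theorem \ref{thm:hip:inprod}, with the single non-formal ingredient being the observation that an alternating (semi)linear form vanishes on every linearly dependent tuple of arguments.

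For the first assertion, by Proposition \ref{prop:hipamu} it suffices to exhibit a single point of $\M$ outside $\hipa(\mu)$. I would fix an index $i$ on which $\mu$ is non-zero and pick any $u \in V$ satisfying \eqref{eq:ast}; such a $u$ exists because each $V_j$ with $j \neq i$ admits members of $\Sub_{k_j}(V_j)$. By assumption there is $x^i \in V_i^{k_i}$ with $\mu_i^{[u]}(x^i) \neq 0$, and the alternating character of $\mu_i^{[u]}$ in its $k_i$ arguments forces $x^i$ to be linearly independent. Hence $\gen{x^i} \in \Sub_{k_i}(V_i)$, and the tuple obtained from $u$ by replacing its $i$-th segment with $x^i$ is a genuine point of $\M$ at which $\mu$ does not vanish, so $\hipa(\mu) \neq \M$.

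For the non-degeneracy part, suppose $\mu$ is non-zero on every segment. The first part gives that $\hipa(\mu)$ is a hyperplane, so by Theorem \ref{thm:hip:inprod} every slice $\hipa(\mu)^{[U]}_i$ lies in $\hipy_i$; what remains is to rule out the possibility $\hipa(\mu)^{[U]}_i = S_i$ for any $U \in \M$ and $i \in I$. Fix such a $U = (\gen{u^1},\dots,\gen{u^n})$; because $\gen{u^j} \in \Sub_{k_j}(V_j)$ each system $u^j$ is linearly independent, so in particular \eqref{eq:ast} holds for this $u$. The non-zero-on-segment-$i$ hypothesis then furnishes $x^i$ with $\mu_i^{[u]}(x^i) \neq 0$, automatically linearly independent by the alternating argument. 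Formula \eqref{eq:auxC} identifies $\hipa(\mu_i^{[u]})$ with $\hipa(\mu)^{[U]}_i$, and $\gen{x^i}$ witnesses that this set is a proper subset of $\Sub_{k_i}(V_i)$; the slice is therefore a genuine hyperplane of $\M_i$.

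No deep obstacle is anticipated. The only point requiring care is that the definition of ``non-zero on $i$-th segment'' produces a tuple $x^i$ which is not \emph{a priori} linearly independent; this has to be recovered post hoc from the alternating property, and it is precisely this feature that makes the single hypothesis serve both assertions through the same calculation.
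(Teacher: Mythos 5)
Your proof is correct and follows essentially the same route as the paper's: combine Proposition \ref{prop:hipamu} with Theorem \ref{thm:hip:inprod} and the identity \eqref{eq:auxC}, using the non-zero-on-segment hypothesis to produce a witness $x^i$ showing the relevant slice $\hipa(\mu)^{[U]}_i$ is proper. Your extra remark that the linear independence of $x^i$ must be recovered from the alternating property (and characteristic $\neq 2$) is a legitimate point of care that the paper's argument passes over silently.
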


Immediately by \ref{fct:mu} we get

\begin{cor}
  There is a (non-degenerate) hyperplane in $\M$.
\end{cor}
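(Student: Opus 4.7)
The plan is a direct application of Proposition~\ref{fct:mu}: it suffices to exhibit a segment-wise semilinear and alternating form $\mu$ on $V = \bigtimes_{i=1}^n V_i^{k_i}$ that is non-zero on every segment, since then $\hipa(\mu)$ is a non-degenerate hyperplane of $\M$, yielding both assertions (the parenthesised ``non-degenerate'' and, a fortiori, ``hyperplane''). One may tacitly assume $\dim V_i \geq k_i$ for every $i$, otherwise $\PencSpace(k_i,V_i)$ is empty and there is nothing to prove; in particular each $V_i$ admits non-zero alternating $k_i$-linear forms.

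A natural first attempt is the product $\mu(u^1,\dots,u^n) := \eta_1(u^1) \cdots \eta_n(u^n)$ with each $\eta_i$ a non-zero alternating $k_i$-linear form on $V_i$. This $\mu$ is segment-wise $k$-linear (all $\sigma_i = \id$) and alternating on each segment; since each $\eta_i$ takes a non-zero value on some linearly independent $k_i$-tuple, $\mu$ does not vanish on some admissible $u$, so $\hipa(\mu) \neq \M$ and Proposition~\ref{prop:hipamu} immediately yields a hyperplane.

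For the non-degenerate version I would pass through the Pl\"ucker--Segre picture encoded by \eqref{eq:muast}: a segment-wise $k$-linear $\mu$ corresponds to a linear functional $\mu^\ast$ on $W = \bigotimes_{i=1}^n \bigwedge^{k_i} V_i$, and the condition that $\mu$ be non-zero on the $i$-th segment translates into: for every $(n-1)$-tuple of non-zero decomposable wedges $(w^j)_{j\neq i}$, the partial evaluation $\mu^\ast(w^1\otimes\cdots\otimes\bullet\otimes\cdots\otimes w^n)$ is a non-zero linear form on $\bigwedge^{k_i} V_i$. A generic $\mu^\ast \in W^\ast$ satisfies this for all $i$ at once, since the locus of $\mu^\ast$ failing on some segment is a proper algebraic subset of $W^\ast$; any such $\mu^\ast$ gives via \eqref{eq:def:hipamu} the desired $\mu$, and Proposition~\ref{fct:mu} concludes.

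The main obstacle is that the plain product $\prod_i \eta_i$ generically fails to be non-zero on any segment once $\dim V_j > k_j$ for some $j$, because the required property would force each $\eta_j$ to be non-vanishing on every linearly independent $k_j$-tuple, a condition only met when $\dim V_j = k_j$. Passing to a non-factorisable $\mu^\ast$ is therefore essential for the non-degenerate conclusion, and the delicate point is confirming that a generic functional on the Segre--Pl\"ucker ambient space remains non-vanishing on every partial-decomposable slice.
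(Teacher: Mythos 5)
Your first step is sound and is, in effect, all the paper itself records: the paper derives this corollary ``immediately'' from \ref{fct:mu} without exhibiting any form, whereas you actually produce one. The product $\mu=\eta_1\cdots\eta_n$, combined with \ref{prop:hipamu} and the observation that $\mu$ does not vanish at some admissible tuple, correctly yields the existence of a hyperplane $\hipa(\mu)$, and you are right that this product form is in general not non-zero on any segment once $\dim V_j>k_j$ for some $j$.

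The gap is in the second half. The assertion that a generic functional $\mu^\ast$ on $\bigotimes_{i=1}^{n}\bigwedge^{k_i}V_i$ is non-zero on every segment --- because ``the locus of $\mu^\ast$ failing on some segment is a proper algebraic subset'' --- is false in general: that locus is the image under a proper projection of an incidence variety over the product of Grassmannians, and it can be \emph{everything}. The paper's own closing remark records exactly this: for $k=n>2$ over an algebraically closed field there are \emph{no} forms $\mu$ that are non-zero on all $n$ segments. Concretely, take $n=3$, $k_1=k_2=k_3=1$, $\dim V_i=2$: for any trilinear $\mu$ the set of pairs $\bigl(\gen{u^1},\gen{u^2}\bigr)$ with $\mu(u^1,u^2,\cdot)\equiv 0$ is cut out by two bidegree-$(1,1)$ equations on $\PencSpace(1,V_1)\times\PencSpace(1,V_2)$ and is therefore non-empty over an algebraically closed field, so \emph{every} $\mu$ fails on the third segment. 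Hence genericity cannot deliver the parenthesised ``non-degenerate'' claim; the existence of a form non-zero on all segments is genuinely field- and dimension-dependent (it does hold, for instance, for $n=2$ via a suitable non-degenerate sesquilinear pairing, cf.\ \ref{cor:hip:inproj2prod}), and a complete argument must either restrict to such situations or produce non-degenerate hyperplanes not of the form $\hipa(\mu)$. In fairness, the paper's one-line derivation quietly assumes the same existence statement you set out to prove, so the burden you took on is real --- but the genericity argument does not discharge it.
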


Following \cite[Ch.~14.1]{multidim} the form $\mu$ is \emph{GKZ non-degenerate}  
if for all $u\in V$ there is $i\in\set{1,\dots,k}$ and $v\in V$ 
such that $\mu(u_1,\dots,u_{i-1},v_i,u_{i+1}\dots,u_k) \neq 0$.

\begin{rem}
  If $k_1=\dots=k_n=1$, i.e.\ if $\M$ is the Segre product of projective spaces, then 
  the form $\mu$ is GKZ non-degenerate iff $\hipa(\mu)$ is spiky. 
\end{rem}

According to \cite[Ch.~14]{multidim} the form $\mu$ is GKZ non-degenerate iff the hyperdeterminant of
the multidimensional matrix associated with $\mu$ is non-zero. This let us interpret non-zero
hyperdeterminants as those corresponding to spiky hyperplanes in suitable Segre products.

Two papers \cite{hallshult} and \cite{shult} (see also \cite{cuypers}, \cite{debruyn})
provide an exhaustive characterization of hyperplanes in Grassmann spaces. Let us recall
the embeddable case.
\begin{fact}\label{fct:hipygrass}
  Let $n=1$, so $\M$ is a Grassmann space embeddable into a projective space. 
  Then\/ $\hipa$ is a hyperplane in\/ $\M$ iff\/ $\hipa = \hipa(\mu)$ for some non-zero $k$-linear 
  alternating form $\mu$.
\end{fact}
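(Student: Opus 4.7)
The plan is to reduce the statement to hyperplanes of a projective space via the Grassmann embedding and then invoke the correspondence between covectors on $\bigwedge^k V$ and alternating $k$-linear forms on $V$.

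First I would handle the easy direction ($\Leftarrow$). Assume $\mu$ is a non-zero $k$-linear alternating form on $V$. Since $n=1$, the linear-independence condition $(\ast_i)$ with $i=1$ is vacuous, so $\mu$ being non-zero on its only segment simply amounts to the existence of some $x \in V^k$ with $\mu(x) \neq 0$, i.e.\ to $\mu \neq 0$. Hence Proposition \ref{fct:mu} gives that $\hipa(\mu)$ is a (non-degenerate) hyperplane in $\M$.

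For the converse direction I would proceed as follows. Compose the Grassmann embedding
\begin{cmath}
  {\bf g}\colon \PencSpace(k,V) \longrightarrow \PencSpace\bigl(1,\textstyle\bigwedge^k V\bigr),\quad
  \gen{u_1,\dots,u_k}\longmapsto \gen{u_1\wedge\dots\wedge u_k},
\end{cmath}
and recall that $\bf g$ is a full projective embedding of the Grassmann space: lines of $\M$ go to lines of the ambient projective space, and $\bf g$ is injective. The crucial input, taken from \cite{shult} (see also \cite{hallshult}, \cite{debruyn}), is that every hyperplane $\hipa$ of an embeddable Grassmann space arises as ${\bf g}^{-1}(H)$ for some projective hyperplane $H$ of $\PencSpace(1,\bigwedge^k V)$. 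I would simply quote that result here.

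Now every hyperplane of the projective space $\PencSpace(1,\bigwedge^k V)$ has the form $H = \Ker(\eta)/{\sim}$ for a non-zero linear functional $\eta \in (\bigwedge^k V)^\ast$. By the universal property of the exterior power, such $\eta$ corresponds bijectively to a non-zero alternating $k$-linear form $\mu$ on $V$ via $\mu(u_1,\dots,u_k) := \eta(u_1\wedge\dots\wedge u_k)$. Unwinding definitions, $\gen{u_1,\dots,u_k}\in{\bf g}^{-1}(H)$ iff $\eta(u_1\wedge\dots\wedge u_k) = 0$ iff $\mu(u_1,\dots,u_k) = 0$, which is exactly the defining condition of $\hipa(\mu)$ from \eqref{eq:def:hipamu}. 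Therefore $\hipa = \hipa(\mu)$, as required.

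The main obstacle is the middle step — showing that every hyperplane of an embeddable Grassmann space is pulled back from a hyperplane of the projective space into which it embeds. This is far from formal: it relies on the fact that the image of $\bf g$ linearly spans the ambient projective space and on a careful analysis, carried out in \cite{shult}, of how subspaces of the Grassmannian propagate inside the ambient $\PencSpace(1,\bigwedge^k V)$. Once that characterization is in hand, the remainder of the argument reduces to the standard duality between $(\bigwedge^k V)^\ast$ and alternating $k$-linear forms on $V$, and is routine.
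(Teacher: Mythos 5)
Your argument is correct and matches what the paper does: Fact \ref{fct:hipygrass} is stated there without proof as a recollection of the characterization in \cite{shult} (see also \cite{hallshult}, \cite{debruyn}), which is exactly the external input you quote, and your remaining steps (the easy direction via \ref{fct:mu}, noting that for $n=1$ the condition $(\ast_i)$ is vacuous so ``non-zero on the segment'' just means $\mu\neq 0$, plus the standard duality between covectors on $\bigwedge^k V$ and alternating $k$-linear forms on $V$) are the routine unwinding the paper leaves implicit. No gaps.
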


In view of \cite{cohenshult}, by \ref{fct:hipygrass} we get the following
\begin{fact}\label{fct:hipypolar}
  Let $n=k=1$, so\/ $\M$ is a projective space, and let $\xi$ be a bilinear reflexive form on\/ $V$.
  Then\/ $\hipa$ is a hyperplane in the polar space $\PencSpace(1,\xi)$ iff\/
  $\hipa = \hipa(\mu)\cap Q_1(\xi)$ for some non-zero $\mu\in V^\ast$ such that
  $Q_1(\xi)\nsubseteq\hipa(\mu)$.
\end{fact}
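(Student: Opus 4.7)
The plan is to handle the two implications separately, using Fact~\ref{fct:hipygrass} (applied to the ambient projective space) to describe hyperplanes of $\PencSpace(1,V)$ via linear forms, and the Cohen--Shult result \cite{cohenshult} to ensure that every polar hyperplane extends to a projective hyperplane.

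For the easy direction $(\Leftarrow)$, I would first note that Fact~\ref{fct:hipygrass} applied with $n=k=1$ tells us that every non-zero $\mu\in V^\ast$ produces a hyperplane $\hipa(\mu)$ of the projective space $\PencSpace(1,V)$. I would then view the polar space $\PencSpace(1,\xi) = \struct{Q_1(\xi), {\cal G}_1(\xi)}$ as a substructure of $\PencSpace(1,V)$ in the sense of Lemma~\ref{lem:hip:restricted}, since isotropic pencils are pencils of $\PencSpace(1,V)$ contained in $Q_1(\xi)$. The assumption $Q_1(\xi)\nsubseteq\hipa(\mu)$ is exactly the hypothesis $S_0\nsubseteq\hipa$ of that lemma, so $\hipa(\mu)\cap Q_1(\xi)$ is a hyperplane in $\PencSpace(1,\xi)$.

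For the harder direction $(\Rightarrow)$, assume $\hipa$ is a hyperplane of $\PencSpace(1,\xi)$. The key input is the characterization from \cite{cohenshult}: for embeddable polar spaces, every (geometric) hyperplane arises as the intersection of the polar space with a projective hyperplane of the ambient projective space. Using this, I would produce a hyperplane $\widetilde{\hipa}$ of $\PencSpace(1,V)$ with $\hipa = \widetilde{\hipa}\cap Q_1(\xi)$. Then, applying Fact~\ref{fct:hipygrass} (the $n=k=1$ case recovers the classical fact that projective hyperplanes are kernels of covectors), I get $\widetilde{\hipa} = \hipa(\mu)$ for some non-zero $\mu\in V^\ast$. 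Since $\hipa$ is a proper subset of $Q_1(\xi)$, we must have $Q_1(\xi)\nsubseteq\hipa(\mu)$, which completes the proof.

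The only genuine obstacle here is the backward direction, and it is an obstacle only because it is not internal to this paper: it rests on the Cohen--Shult theorem for embeddable polar spaces. Given that result, both directions reduce to unwinding definitions and applying the already-established Lemma~\ref{lem:hip:restricted} together with Fact~\ref{fct:hipygrass}, so no new calculation is required.
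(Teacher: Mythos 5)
Your proposal is correct and matches the paper's route exactly: the paper offers no written proof, merely prefacing the fact with ``In view of \cite{cohenshult}, by \ref{fct:hipygrass}'', which is precisely your combination of the Cohen--Shult extension of polar hyperplanes to projective hyperplanes for the forward direction and the restriction argument (your appeal to Lemma~\ref{lem:hip:restricted}) for the converse. You have simply made explicit the unwinding the authors left implicit.
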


Then, as a natural generalization of \ref{fct:hipypolar}, we obtain a formula for hyperplanes in the Segre product of polar Grassmann spaces.

\begin{prop}\label{prop:hip:inpolarprod}
  Let\/ $\xi_i$ be a bilinear reflexive form on\/ $V_i$ for $i=1,\dots,n$.%
  Assume that $\mu$ and $\xi_1,\dots,\xi_n$ satisfy the following condition:
  if\/ $\gen{u^j}\in Q_{k_j}(\xi_j)$ for $j\neq i$, then\/
  $\hipa\bigl(\mu_i^{[u]}\bigr)\cap Q_{k_i}(\xi_i)$
  is neither empty nor a single point for all $u^i\in V_i^{k_i}$, $i=1,\dots, n$. 
  If\/ $\mu$ is non-zero on all $n$ segments and\/ 
  $Q_{k_1}(\xi_1) \times\dots\times Q_{k_n}(\xi_n)\nsubseteq\hipa(\mu)$, then
  $$
    \hipa(\mu,\xi_1,\dots,\xi_n) = 
      \hipa(\mu)\cap\bigl( Q_{k_1}(\xi_1) \times\dots\times Q_{k_n}(\xi_n) \bigr)
  $$
  is a non-degenerate hyperplane in 
  $\PencSpace(k_1,\xi_1)\otimes\dots\otimes\PencSpace(k_n,\xi_n)$.
\end{prop}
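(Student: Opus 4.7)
The plan is to apply Theorem \ref{thm:hip:inprod} to $\hipa := \hipa(\mu,\xi_1,\dots,\xi_n)$ inside the Segre product $\PencSpace(k_1,\xi_1)\otimes\dots\otimes\PencSpace(k_n,\xi_n)$. First, for a point $U=(\gen{u^1},\dots,\gen{u^n})$ of this product I would compute the slice, exactly as in \eqref{eq:auxC} but now intersected with the polar Grassmannian:
\[
  \hipa^{[U]}_i \;=\; \bigl\{ X\in Q_{k_i}(\xi_i)\colon \subst(U,i,X)\in\hipa \bigr\}
             \;=\; \hipa(\mu_i^{[u]})\cap Q_{k_i}(\xi_i).
\]

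The heart of the argument is showing that each such slice is a hyperplane of $\PencSpace(k_i,\xi_i)$. Since $\gen{u^j}\in Q_{k_j}(\xi_j)$ forces $u^j$ to be linearly independent for $j\neq i$, the hypothesis that $\mu$ is non-zero on the $i$-th segment gives $\mu_i^{[u]}\not\equiv 0$, so by \ref{prop:hipamu} the set $\hipa(\mu_i^{[u]})$ is a hyperplane of the ordinary Grassmann space $\PencSpace(k_i,V_i)$. The key geometric observation I would exploit is that a line of the polar Grassmann space $\PencSpace(k_i,\xi_i)$ has the form $\penc_{\xi_i}(H,B)$ with $B\in Q_{k_i+1}(\xi_i)$, and because every $k_i$-subspace of the isotropic subspace $B$ is itself isotropic, this coincides with the ordinary Grassmann pencil $\penc(H,B)$. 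Consequently every line of $\PencSpace(k_i,\xi_i)$ is already contained in $Q_{k_i}(\xi_i)$ and meets $\hipa(\mu_i^{[u]})$, so it meets $\hipa^{[U]}_i$; a standard argument with two points determining a line also shows that $\hipa^{[U]}_i$ is a subspace. The hypothesis that $\hipa(\mu_i^{[u]})\cap Q_{k_i}(\xi_i)$ is neither empty nor a single point then excludes the degenerate possibilities and pins down $\hipa^{[U]}_i$ as a proper hyperplane, which yields the non-degeneracy.

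To finish, I would certify condition \eqref{hipinprod:war2} of \ref{thm:hip:inprod} by using the assumption $Q_{k_1}(\xi_1)\times\dots\times Q_{k_n}(\xi_n)\nsubseteq\hipa(\mu)$ to produce a point $U^\ast=(\gen{u^1},\dots,\gen{u^n})$ with $\mu(u^1,\dots,u^n)\neq 0$; then $\gen{u^i}\notin\hipa^{[U^\ast]}_i$ for every $i$, so at least one slice is a proper hyperplane, and \ref{thm:hip:inprod} delivers the conclusion. The step I expect to be the main obstacle is the geometric verification in the middle paragraph---translating the algebraic fact that $\mu_i^{[u]}\not\equiv 0$ into the statement that its zero set carves out a genuine proper hyperplane inside the polar Grassmannian. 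This is precisely where the coincidence between the isotropic pencil $\penc_{\xi_i}(H,B)$ and the ordinary pencil $\penc(H,B)$, together with the cardinality hypothesis, does the heavy lifting.
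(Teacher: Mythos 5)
Your proposal follows essentially the same route as the paper: compute the slice $\hipa_i^{[U]}=\hipa\bigl(\mu_i^{[u]}\bigr)\cap Q_{k_i}(\xi_i)$ and recognize it as the intersection of a hyperplane of $\PencSpace(k_i,V_i)$ with the point set of the embedded polar Grassmannian, hence a hyperplane there, which gives non-degeneracy via Theorem~\ref{thm:hip:inprod}. The only difference is that where the paper simply cites Lemma~\ref{lem:hip:restricted} (together with the stated fact that lines of $\PencSpace(k,\xi)$ are lines of $\PencSpace(k,V)$), you re-derive that restriction argument by hand; the substance is identical.
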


\begin{proof}
  Set $\hipa := \hipa(\mu,\xi_1,\dots,\xi_n)$ and take
  \begin{cmath}
    U = \bigl(\gen{u^1},\dots,\gen{u^n}\bigr)\in Q_{k_1}(\xi_1)\times\dots\times Q_{k_n}(\xi_n).
  \end{cmath}  
  Note that $\hipa_i^{[U]} = \hipa\bigl(\mu_i^{[u]}\bigr)\cap Q_{k_i}(\xi_i)$.
  By the assumed condition and \ref{lem:hip:restricted} the set
  $\hipa_i^{[U]}$ is a hyperplane in $\PencSpace(k_i,\xi_i)$
  as the intersection of a hyperplane and the point set $Q_{k_i}(\xi_i)$ of 
  the polar space $\PencSpace(k_i,\xi_i)$ embedded
  into $\PencSpace(k_i, V_i)$ for $i=1,\dots,n$.
  Therefore, $\hipa$ is non-degenerate.
\end{proof}

Some families of non-degenerate hyperplanes were presented so far, but 
in view of \ref{thm:auty:prod} \luskwiaty\ hyperplanes are needed.

We say that $\mu$ is \emph{non-degenerate on $i$-th segment} when
for all $u\in V$ satisfying $(\ast_i)$ 
any linearly independent system $x^i_1,\dots,x^i_{k_i-1}\in V_i$ 
can be completed with $x^i_{k_i}\in V_i$ so that $\mu_i^{[u]}(x^i)\neq 0$.
This notion is a strengthening of a corresponding notion for alternating $k$-linear
forms in \cite{hall}. More precisely, in case $n=1$, i.e. for Grassmann spaces, 
if $\mu$ is non-degenerate, then $\mu$ is non-degenerate in the sense of \cite{hall}, 
while the inverse is true only for $k \le 2$.
Obviously, if $\mu$ is non-degenerate on $i$-th segment, then it is non-zero on $i$-th segment.

\begin{lem}\label{lem:klinear}
  If\/ $\mu$ is non-degenerate on $i$-th segment, then $\hipa\bigl(\mu_i^{[u]}\bigr)$ is
  a \luskwiaty\ hyperplane in $\PencSpace(k_i,V_i)$ for all $u\in V$ satisfying $(\ast_i)$.
\end{lem}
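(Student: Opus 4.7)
The plan is to take an arbitrary line of $\PencSpace(k_i,V_i)$ contained in $\hipa(\mu_i^{[u]})$ and exhibit, explicitly, a point lying outside that hyperplane and adjacent to every point of the line. Writing $\eta:=\mu_i^{[u]}$ for brevity, the line has the form $L=\penc(H,B)$ for some $H\in\Sub_{k_i-1}(V_i)$ and $B\in\Sub_{k_i+1}(V_i)$ with $H\subseteq B$, and $\eta$ is an alternating $k_i$-semilinear form on $V_i$.

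First I would fix a basis $x_1,\dots,x_{k_i-1}$ of $H$. Because $u$ satisfies $(\ast_i)$ and $\mu$ is non-degenerate on the $i$-th segment, this linearly independent system can be completed by some $z\in V_i$ with
\begin{cmath}
\eta(x_1,\dots,x_{k_i-1},z)\neq 0.
\end{cmath}
I then set $U':=\langle x_1,\dots,x_{k_i-1},z\rangle$. Since $\eta$ is alternating, $z\in H$ would force the above evaluation to vanish; hence $z\notin H$, $\dim U'=k_i$, and $U'$ is a legitimate point of $\PencSpace(k_i,V_i)$. By construction $U'\notin\hipa(\eta)$.

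The last step is to check that $L\subseteq[U']_{\mathord{\adjac}}$. Since $H\subseteq U'$ and $H\subseteq U$ for every $U\in\penc(H,B)$, the intersection $U\cap U'$ already contains $H$; whenever $U\neq U'$, equality of dimensions forces $\dim(U\cap U')=k_i-1$, which is exactly the adjacency condition in a Grassmann space. Moreover $U'\notin L$, because otherwise $U'\in L\subseteq\hipa(\eta)$ would contradict $U'\notin\hipa(\eta)$. Consequently $U'\neq U$ for all $U\in L$ and $U'\adjac U$ for every such $U$, which is precisely the \luskwiaty\ property for the line $L$.

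The only mildly delicate point is aligning the non-degeneracy hypothesis with the specific basis $x_1,\dots,x_{k_i-1}$ of $H$, but this is exactly what the definition of ``non-degenerate on the $i$-th segment'' was engineered to provide; everything else is a routine dimension count inside the Grassmann space.
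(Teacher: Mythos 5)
Your proposal is correct and follows essentially the same route as the paper: both pick a basis of $H$, invoke segment-wise non-degeneracy to extend it to a $k_i$-tuple on which $\mu_i^{[u]}$ is non-zero, and take the span as the witnessing point off the hyperplane. The only cosmetic difference is that the paper certifies $U'\notin L$ by checking linear independence of the extended system against a basis of $B$, whereas you deduce it more directly from $U'\notin\hipa\bigl(\mu_i^{[u]}\bigr)\supseteq L$ and then verify adjacency to \emph{every} point of $L$ by the dimension count on $H\subseteq U\cap U'$ -- a slightly cleaner finish to the same argument.
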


\begin{proof}
  Let us fix $u\in V$ that satisfies $(\ast_i)$ 
  and let $L=\penc(H, B)$ be a line of $\PencSpace(k_i,V_i)$ contained 
  in $\hipa\bigl(\mu_i^{[u]}\bigr)$.
  Assume that $H =\gen{u_1,\dots, u_{k_i-1}}$ for some $u_1,\dots, u_{k_i-1}\in V_i$.
  Note that $B = H\oplus\gen{w_1,w_2}$ for some $w_1, w_2\in V_i$,
  $U_j := H\oplus\gen{w_j}$ are points on $L$, and $\mu_i^{[u]}(u_1,\dots, u_{k_i-1}, w_j) = 0$
  for $j=1,2$.
  As $\mu_i^{[u]}$ is non-degenerate there is $v\in V_i$ such that
  $\mu_i^{[u]}(u_1,\dots, u_{k_i-1}, v) \neq 0$.
  This means that $u_1,\dots, u_{k_i-1}, w_1, w_2, v$ are linearly independent,
  in other words we have a point $U := H\oplus\gen{v}$ in $\PencSpace(k_i,V_i)$ which together
  with $U_1, U_2$ forms a triangle, i.e.\ spans a plane, and $U\notin\hipa\bigl(\mu_i^{[u]}\bigr)$.
\end{proof}

\begin{prop}\label{prop:hipaingrass}
  If\/ $\mu$ is non-degenerate on all $n$ segments, then $\hipa(\mu)$ is a \luskwiaty\ 
  hyperplane in\/ $\M$.
\end{prop}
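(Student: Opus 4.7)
The plan is to chain together three results already proved in the paper: Proposition \ref{fct:mu}, Lemma \ref{lem:klinear}, and the right-to-left direction of Lemma \ref{lem:prezerw:wystaw}\eqref{lem:prezerw:wystaw:i}. Since non-degeneracy on a segment is by definition stronger than being non-zero on that segment, the assumption supplies both properties on every segment.

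First, I would apply Proposition \ref{fct:mu} to deduce that $\hipa(\mu)$ is a non-degenerate hyperplane in $\M$. This puts us in the situation where Lemma \ref{lem:prezerw:wystaw}\eqref{lem:prezerw:wystaw:i} applies, so to conclude that $\hipa(\mu)$ is \luskwiaty\ it suffices to verify that each slice $\hipa(\mu)_i^{[U]}$ is a \luskwiaty\ hyperplane in the component $\PencSpace(k_i,V_i)$, for every $U\in S$ and every $i\in\set{1,\dots,n}$.

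Next, I would fix $U=(\gen{u^1},\dots,\gen{u^n})\in S$ and $i$, and pick some representative $u\in V$ with $u^j$ a basis of $\gen{u^j}$ for every $j$. In particular, $u^j$ is linearly independent for $j\neq i$, so condition $(\ast_i)$ from the definition of non-degeneracy on $i$-th segment holds. The identification \eqref{eq:auxC} gives $\hipa(\mu)_i^{[U]}=\hipa\bigl(\mu_i^{[u]}\bigr)$ as subsets of $\Sub_{k_i}(V_i)$; crucially, this set does not depend on the choice of bases of the $\gen{u^j}$, only on the subspaces themselves, because rescaling $u^j$ multiplies $\mu_i^{[u]}$ by a nonzero scalar and hence preserves its zero-set.

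Finally, I would invoke Lemma \ref{lem:klinear} directly: under the assumption that $\mu$ is non-degenerate on the $i$-th segment and $u$ satisfies $(\ast_i)$, that lemma states exactly that $\hipa\bigl(\mu_i^{[u]}\bigr)$ is \luskwiaty\ in $\PencSpace(k_i,V_i)$. Combining this with the previous step over all $U$ and all $i$ feeds the hypothesis of Lemma \ref{lem:prezerw:wystaw}\eqref{lem:prezerw:wystaw:i}, which then yields that $\hipa(\mu)$ itself is \luskwiaty\ in $\M$. There is no real obstacle here: the only point one must be careful with is that $U\in S$ forces each $u^j$ to be linearly independent, which is exactly what is needed to satisfy $(\ast_i)$, so the assumption in Lemma \ref{lem:klinear} is satisfied uniformly across all slices.
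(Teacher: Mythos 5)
Your proposal is correct and follows essentially the same route as the paper's own proof: establish that $\hipa(\mu)$ is a non-degenerate hyperplane via \ref{fct:mu}, show each slice $\hipa(\mu)_i^{[U]}=\hipa\bigl(\mu_i^{[u]}\bigr)$ is \luskwiaty\ by \ref{lem:klinear}, and conclude with \ref{lem:prezerw:wystaw}\eqref{lem:prezerw:wystaw:i}. The only difference is that you spell out the independence of the slice from the choice of basis representatives, a point the paper leaves implicit.
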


\begin{proof}
  Note that $\hipa(\mu)$ is a non-degenerate hyperplane.
  Let $U = \bigl(\gen{u^1},\dots,\gen{u^n}\bigr)\in\Sub_{k_1}(V_1)\times\dots\times\Sub_{k_n}(V_n)$. 
  By \ref{lem:klinear} $\hipa(\mu_i^{[u]}) = \hipa(\mu)_i^{[U]}$ is a \luskwiaty\ hyperplane
  in $\PencSpace(k_i,V_i)$ for all $i=1,\dots, n$.
  By \ref{lem:prezerw:wystaw}\eqref{lem:prezerw:wystaw:i} the hyperplane $\hipa(\mu)$
  is \luskwiaty.
\end{proof}

In particular cases,
combining \ref{prop:hip:inpolarprod} and \ref{prop:hipaingrass} yields the formula for 
\luskwiaty\ hyperplanes in the Segre product of polar spaces.

\begin{prop}\label{cor:hip:inpolarprod:1}
  Assume that $k=n$ i.e.\ $k_1 = \dots = k_n = 1$ or\/ $\M$ is the Segre product of projective spaces.
  If\/ $\xi_i$ is a non-degenerate symplectic bilinear form on $V_i$ for $i=1,\dots,n$ 
  and $\mu$ is non-zero (or equivalently non-degenerate in this case) on all $n$ segments, then
  $\hipa(\mu,\xi_1,\dots,\xi_n)$ is a \luskwiaty\ hyperplane in 
  the Segre product $\PencSpace(1,\xi_1)\otimes\dots\otimes\PencSpace(1,\xi_n)$.
\end{prop}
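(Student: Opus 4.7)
The plan is to combine Proposition~\ref{prop:hip:inpolarprod} with a slice-wise argument in the style of Proposition~\ref{prop:hipaingrass}. First I would check that the hypotheses of~\ref{prop:hip:inpolarprod} hold: in characteristic $\ne 2$ every $1$-subspace is isotropic under a symplectic form, so $Q_1(\xi_i)=\Sub_1(V_i)$; the slice $\hipa(\mu_i^{[u]})\cap Q_1(\xi_i)$ coincides with the projective hyperplane $\hipa(\mu_i^{[u]})$, which is non-empty and not a single point as soon as $\dim V_i\ge 3$; and $Q_1(\xi_1)\times\dots\times Q_1(\xi_n)$ is the entire point set of the projective product, so it is not contained in the proper subset $\hipa(\mu)$. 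Consequently $\hipa:=\hipa(\mu,\xi_1,\dots,\xi_n)$ is a non-degenerate hyperplane of the polar Segre product.

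With non-degeneracy in hand, Lemma~\ref{lem:prezerw:wystaw}\eqref{lem:prezerw:wystaw:i} reduces flappiness of $\hipa$ to flappiness of every slice $\hipa^{[U]}_i=\hipa(\mu_i^{[u]})$ inside $\PencSpace(1,\xi_i)$. Mimicking Lemma~\ref{lem:klinear}, given a totally isotropic line $\tilde l\subseteq\Ker(\mu_i^{[u]})$ I would look for a vector $v\in\tilde l^{\,\perp}$ with $\mu_i^{[u]}(v)\ne 0$: the projective point $\gen v$ is then adjacent in $\PencSpace(1,\xi_i)$ to every point of $\tilde l$ while lying off the slice, which is exactly what the flappy condition demands.

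The hard part is producing this $v$. Non-degeneracy of $\xi_i$ identifies $\mu_i^{[u]}$ with a vector $v_0\in V_i$ such that $\Ker(\mu_i^{[u]})=v_0^{\perp}$, and the existence of $v$ is equivalent to $v_0\notin\tilde l$, i.e.\ $\tilde l^{\,\perp}\nsubseteq v_0^{\perp}$. Ruling out the degenerate case $v_0\in\tilde l$ is the symplectic-specific obstacle that did not appear in the purely projective Proposition~\ref{prop:hipaingrass}, and must be handled through the fine interplay between the form $\mu$ on the $i$-th segment and the symplectic orthogonality on $V_i$. Once the slice-wise flappiness is secured uniformly in $u$ and $i$, Lemma~\ref{lem:prezerw:wystaw}\eqref{lem:prezerw:wystaw:i} propagates it to the entire Segre product, completing the proof.
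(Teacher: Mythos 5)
Your outline follows essentially the same route as the paper's proof: establish that $\hipa:=\hipa(\mu,\xi_1,\dots,\xi_n)$ is a non-degenerate hyperplane, reduce the \luskwiaty\ property to the slices $\hipa_i^{[U]}=\hipa\bigl(\mu_i^{[u]}\bigr)\cap Q_1(\xi_i)$ via \ref{lem:prezerw:wystaw}\eqref{lem:prezerw:wystaw:i}, and then, for a totally isotropic line $\tilde l$ contained in a slice, look for $v\in\tilde l^{\perp_{\xi_i}}$ with $\mu_i^{[u]}(v)\neq 0$. You have also correctly translated the existence of such $v$ into the condition $v_0\notin\tilde l$, where $v_0$ represents $\mu_i^{[u]}$ through the non-degenerate form $\xi_i$. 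But the argument stops exactly there, and that last step is the entire content of the claim; as written, the proposal is a correct reduction followed by an unproved assertion, i.e.\ a genuine gap.

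Moreover, the gap cannot be closed. Since $\xi_i$ is alternating, $\xi_i(v_0,v_0)=0$, so $v_0\in v_0^{\perp_{\xi_i}}=\Ker\bigl(\mu_i^{[u]}\bigr)$ and $\gen{v_0}$ is a point of the slice. Pick any $w\in v_0^{\perp_{\xi_i}}\setminus\gen{v_0}$ (such $w$ exists once $\dim V_i\ge 4$, which is needed for $\PencSpace(1,\xi_i)$ to have lines at all); then $\tilde l=\gen{v_0,w}$ is totally isotropic, is contained in $\Ker\bigl(\mu_i^{[u]}\bigr)$, and satisfies $\tilde l^{\perp_{\xi_i}}\subseteq v_0^{\perp_{\xi_i}}$, so every point of the polar space collinear with all of this line already lies on the slice. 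Hence the slice fails the \luskwiaty\ test at this line, and by \ref{lem:prezerw:wystaw}\eqref{lem:prezerw:wystaw:i} so does $\hipa$. This is precisely the point the paper's own proof glosses over: it asserts without justification that $A=U_1^{\perp_{\xi_i}}\cap U_2^{\perp_{\xi_i}}\nsubseteq\hipa_i^{[U]}$, which is false whenever $\gen{v_0}$ lies on the chosen line, and its closing appeal to \ref{cor:wystawLS}\eqref{cor:wystawLS:i} is licensed only for products of linear spaces, whereas a rank-$\ge 2$ symplectic polar space is not linear. So the obstacle you flagged is not a technicality to be ``handled'' but a real defect of the statement as it stands; your proposal does not prove the proposition, and neither, in my reading, does the paper.
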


\begin{proof}
  Set $\hipa := \hipa(\mu,\xi_1,\dots,\xi_n)$, take
  $U\in Q_1(\xi_1)\times\dots\times Q_1(\xi_n)$, $i\in I$, and consider the set
  $\hipa_i^{[U]}$.
  Let $L\in Q_2(\xi_i)$ such that $L\subseteq\hipa_i^{[U]}$.
  We take any two points $U_1, U_2$ on $L$ and set 
  $A:= U_1^{\perp_{\xi_i}}\cap U_2^{\perp_{\xi_i}}$. 
  It is impossible that $A\subseteq\hipa_i^{[U]}$. 
  Recall that $Q_1(\xi_i)$ and the point set of $\PencSpace(1, V_i)$ coincide, so
  $\hipa_i^{[U]}$ is not all of $\PencSpace(1, V_i)$.
  We are through by \ref{cor:wystawLS}\eqref{cor:wystawLS:i}.
\end{proof}

When $n=1$ and $k=2$ the form $\mu$ turns out to be a
bilinear symplectic form. Hence, in view of \ref{lem:klinear}, we can state
this.

\begin{cor}\label{cor:radical}
    If\/ $\mu$ is a non-degenerate symplectic bilinear form on  $V$
    (i.e.\ $n=1$, $k=2$), then the set
    $Q_2(\mu)$ of all isotropic  2-subspaces of\/ $V$ w.r.t.\ $\mu$ is a
    \luskwiaty\ hyperplane in\/ $\M_2(V)$. 
\end{cor}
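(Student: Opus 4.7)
The plan is to derive the statement as a direct specialization of Proposition \ref{prop:hipaingrass} (or, what amounts to the same thing, of Lemma \ref{lem:klinear}) to the case $n=1$, $k_1=k=2$. Essentially all the work has already been done in the general framework and we only need to match the dictionary between "symplectic non-degenerate" and "non-degenerate on the $i$-th segment" in the paper's sense.

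First I would translate the set-up. In the notation of Section \ref{sec:apps}, take $V_1=V$ and $k_1=2$, so that $\M = \M_2(V) = \PencSpace(2,V)$ and a generic element $u=u^1=(u_1,u_2)\in V\times V$ determines a point $\gen{u^1}=\gen{u_1,u_2}\in\Sub_2(V)$ precisely when $u_1,u_2$ are linearly independent. Under these identifications the form $\mu$ of the general construction is just a bilinear alternating form on $V$, the slice form $\mu_1^{[u]}$ coincides with $\mu$ itself (there are no other segments, so condition $(\ast_1)$ is vacuous), and by definition
\begin{cmath}
  \hipa(\mu) = \bset{\gen{u_1,u_2}\in\Sub_2(V)\colon \mu(u_1,u_2)=0} = Q_2(\mu).
\end{cmath}

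Next I would verify the hypothesis of \ref{prop:hipaingrass}. The paper's requirement that $\mu$ be non-degenerate on its (unique) segment specializes, with $k_1-1=1$, to the condition that every non-zero vector $x_1\in V$ admit a companion $x_2\in V$ with $\mu(x_1,x_2)\neq 0$. This is exactly the usual non-degeneracy of the symplectic form $\mu$, which is assumed.

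Applying \ref{prop:hipaingrass} (equivalently, feeding the above into \ref{lem:klinear}) now yields immediately that $\hipa(\mu)=Q_2(\mu)$ is a \luskwiaty\ hyperplane in $\M_2(V)$, as required. The only real content to keep track of is the terminological match between the classical notion of a non-degenerate symplectic form and the paper's segment-wise definition, which in this low-complexity case ($n=1$, $k=2$, single segment of length $2$) reduces to the same thing; there is no substantive obstacle beyond this bookkeeping.
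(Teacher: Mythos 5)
Your proposal is correct and follows essentially the same route as the paper, which derives the corollary directly from Lemma \ref{lem:klinear} after observing that for $n=1$, $k=2$ the form $\mu$ is a non-degenerate symplectic bilinear form and $\hipa(\mu)=Q_2(\mu)$. The dictionary check you perform (segment-wise non-degeneracy reducing to ordinary non-degeneracy of the symplectic form) is exactly the content the paper leaves implicit.
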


Let us consider the Segre product of two projective spaces and its hyperplanes.

\begin{prop}\label{cor:hip:inproj2prod}
  Let\/ $V_1, V_2$ be vector spaces over a division ring $D$.
  Then the following conditions are equivalent:
  \begin{conditions}\itemsep-2pt
  \item\label{hipinprojprod:war1}
    $\hipa$ is a hyperplane in $\M_{1,1}(V_1,V_2)$.
  \item\label{hipinprojprod:war2}
    There is a sesquilinear form $\xi\colon V_1\times V_2\lto D$
    which determines a conjugacy $\perp$ by the condition that
    $\gen{u_1}\perp\gen{u_2}$ iff\/ $\xi(u_1,u_2)=0$
    for all $u_i \in {V}_i$ and we have $\hipa = \{ (p,q)\colon p \perp q \}$
    (actually $\hipa = \perp$).
  \end{conditions}
\end{prop}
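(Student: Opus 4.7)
The plan is to prove the two implications separately.

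For \eqref{hipinprojprod:war2}$\Rightarrow$\eqref{hipinprojprod:war1}, I would specialize Proposition~\ref{fct:mu} to the case $n=2$, $k_1=k_2=1$: since the alternation condition on a single vector is vacuous, a segment-wise semilinear and alternating form in this case is precisely a (nonzero) sesquilinear form $\xi$. Alternatively, one verifies directly that $\hipa = \{(\gen{u_1},\gen{u_2}):\xi(u_1,u_2)=0\}$ is a hyperplane: fixing one Segre factor and letting the other vary over a $2$-dimensional subspace $B$ yields a line of $\M_{1,1}(V_1,V_2)$ on which $\xi$ restricts to a (semi)linear form, whose kernel in $B$ is either all of $B$ (the whole line lies in $\hipa$) or a $1$-dimensional subspace (the line meets $\hipa$ in exactly one point). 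Properness follows from $\xi\neq 0$, and projective well-definedness from sesquilinearity.

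The main work is the converse \eqref{hipinprojprod:war1}$\Rightarrow$\eqref{hipinprojprod:war2}. By Theorem~\ref{thm:hip:inprod} and Remark~\ref{rem:correl}, for every $\gen{u_1}\in S_1$ the slice $\delta_1(\gen{u_1})$ is either all of $S_2$ or a hyperplane of the projective space $\PencSpace(1,V_2)$, and so corresponds to a subspace $H_{u_1}\subseteq V_2$ of codimension $0$ or $1$, equivalently the kernel of a functional $\eta_{u_1}\in V_2^\ast$ (with $\eta_{u_1}=0$ in the degenerate case), determined only up to a nonzero scalar. The goal is to pin down these scalars so that $\phi\colon u_1\mapsto\eta_{u_1}$ becomes a semilinear map $V_1\to V_2^\ast$; once this is done, the form $\xi(u_1,u_2):=\phi(u_1)(u_2)$ is sesquilinear and satisfies $\hipa=\hipa(\xi)$ by construction.

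The required coherence comes from the line structure of $\M_{1,1}(V_1,V_2)$: for any $2$-dimensional $B\subseteq V_1$ and any $\gen{u_2}\in S_2$, the corresponding Segre line meets $\hipa$ in one or all points, which forces $H_{u_1}\cap H_{v_1}\subseteq H_{w_1}$ whenever $\gen{u_1},\gen{v_1},\gen{w_1}$ are collinear points of $\PencSpace(1,V_1)$. Dually, the induced map $\bar\phi\colon \PencSpace(1,V_1)\to\PencSpace(1,V_2^\ast)$ (with an ``infinity'' convention for the degenerate slices) sends projective lines into projective lines. I would then invoke a variant of the Fundamental Theorem of Projective Geometry to lift $\bar\phi$ to a semilinear $\phi\colon V_1\to V_2^\ast$, after first fixing a reference pair $(\gen{u_1^0},\gen{u_2^0})\notin\hipa$ with $\eta_{u_1^0}(u_2^0)=1$ and then propagating the normalization along lines of $\PencSpace(1,V_1)$.

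The hard part is exactly this coherent normalization step: producing a genuine semilinear lift over a possibly noncommutative division ring $D$, handling the degenerate points of $\bar\phi$ where $H_{u_1}=V_2$ and thus $\phi(u_1)=0$, and accommodating $V_i$ of arbitrary dimension. Once $\phi$ is in hand, the final verification $\hipa=\hipa(\xi)$ together with the reciprocity $\delta_2(\gen{u_2})=\{\gen{u_1}:\phi(u_1)(u_2)=0\}$ is routine.
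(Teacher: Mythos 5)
Your reduction of \eqref{hipinprojprod:war2}$\Rightarrow$\eqref{hipinprojprod:war1} is fine as a direct verification (note that specializing \ref{fct:mu} would only cover forms that are non-zero on a segment, which is stronger than $\xi\neq 0$, so the hand check is the right move). The problem is the converse. The paper disposes of it in one line: it extends $\hipa$ to a relation $\perp$ on $V_1\times V_2$ by declaring null vectors orthogonal to everything, observes that $\perp$ is then a conjugacy, and cites a representation theorem (Theorem~32.6 of \cite{muzamich}) asserting that every conjugacy between vector spaces over a division ring is $\perp_\xi$ for some sesquilinear form $\xi$. What you propose is, in effect, a re-proof of that representation theorem, and you stop exactly at its hardest point.

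Concretely, the step you defer --- lifting $\bar\phi\colon\PencSpace(1,{V_1})\to\PencSpace(1,{V_2^\ast})$ to a semilinear map via ``a variant of the Fundamental Theorem of Projective Geometry'' --- cannot be carried out as stated. The FTPG requires $\bar\phi$ to be at least injective with image not contained in a line, and a general hyperplane of the product provides no such thing. For the degenerate hyperplane of \eqref{eq:degenhipa} with $n=2$, the map $\delta_1$ is constantly $S_2$ on $\hipa_1$ and constantly $\hipa_2$ off it, so $\bar\phi$ takes exactly two values; the corresponding form is the rank-one $\xi(u_1,u_2)=f(u_1)\,g(u_2)$, which no collineation-type theorem will produce. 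Handling these degenerate slices, together with the coherent scalar normalization over a possibly noncommutative $D$, is precisely the content of the cited conjugacy theorem (a Birkhoff--von Neumann type result), not of the FTPG. So either quote such a theorem, as the paper does, or carry out the normalization argument in full including the degenerate case; as written, the proof has a genuine gap at its central step.
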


\begin{proof}
  \eqref{hipinprojprod:war1}$\implies$\eqref{hipinprojprod:war2}
  The hyperplane $\hipa$ determines a relation $\perp$ on
  $V_1\times V_2$ by the condition
  $u_1 \perp u_2$ iff $u_1$ is null or $u_2$ is null or
  $\bigl(\gen{u_1},\gen{u_2}\bigr)\in\hipa$.
  In view of \cite[Theorem~32.6]{muzamich} there is a required
  sesquilinear form $\xi$ such that $\perp = \perp_\xi$.

  \eqref{hipinprojprod:war2}$\implies$\eqref{hipinprojprod:war1} Straightforward.
\end{proof}

Note that \ref{cor:hip:inproj2prod} provides examples of hyperplanes corresponding to forms
that are essentially segment-wise semilinear.

\begin{cor}
  There are hyperplanes in $\M_{1,1}(V_1,V_2)$ 
  that do not arise from a Segre embedding. 
\end{cor}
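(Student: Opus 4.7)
The plan is to exhibit a sesquilinear form with a nontrivial accompanying field automorphism, apply \ref{cor:hip:inproj2prod} to produce a hyperplane from it, and then show that this hyperplane cannot be the pullback of any hyperplane via the Segre embedding~\eqref{eq:segreembed}.

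First I would fix a field $D$ admitting a nontrivial automorphism $\sigma$ (for instance $D=\mathbb{C}$ with complex conjugation), take vector spaces $V_1,V_2$ over $D$ of dimension at least $2$, and pick a non-degenerate sesquilinear form $\xi\colon V_1\times V_2\lto D$ that is $D$-linear in the first variable and $\sigma$-semilinear in the second. By \ref{cor:hip:inproj2prod} the set $\hipa:=\bset{(\gen{u_1},\gen{u_2})\colon \xi(u_1,u_2)=0}$ is a hyperplane of $\M_{1,1}(V_1,V_2)$.

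Next I would identify which hyperplanes of $\M_{1,1}(V_1,V_2)$ arise from a Segre embedding. The map ${\bf s}$ of~\eqref{eq:segreembed} carries $\M_{1,1}(V_1,V_2)$ into the projective space $\PencSpace(1,V_1\otimes V_2)$, whose hyperplanes are exactly the kernels of nonzero linear forms $\eta\in(V_1\otimes V_2)^\ast$. By the universal property of the tensor product such $\eta$ correspond bijectively to nonzero \emph{strictly} bilinear forms $\beta\colon V_1\times V_2\lto D$ via $\beta(u_1,u_2)=\eta(u_1\otimes u_2)$. Pulling back through ${\bf s}$, a hyperplane of $\M_{1,1}(V_1,V_2)$ arises from a Segre embedding iff it equals $\hipa(\beta)$ in the sense of~\eqref{eq:def:hipamu} for some nonzero bilinear $\beta$.

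It remains to rule out $\hipa=\hipa(\beta)$ for any such $\beta$. Assume the contrary. Since both $\xi(\cdot,u_2)$ and $\beta(\cdot,u_2)$ are $D$-linear functionals on $V_1$ with the same kernel, for every nonzero $u_2$ there is a scalar $c(u_2)\in D^\ast$ with $\beta(u_1,u_2)=c(u_2)\xi(u_1,u_2)$ for all $u_1$. Substituting $u_2\mapsto\alpha u_2$ and comparing $\beta(\cdot,\alpha u_2)=\alpha\beta(\cdot,u_2)$ against $\xi(\cdot,\alpha u_2)=\sigma(\alpha)\xi(\cdot,u_2)$ forces $c(\alpha u_2)=\alpha\sigma(\alpha)^{-1}c(u_2)$. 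On the other hand, expanding the same identity at $u_2+u_2'$ using additivity of $\xi$ and $\beta$ and picking $u_2,u_2'$ with $\xi(\cdot,u_2),\xi(\cdot,u_2')$ linearly independent (available by non-degeneracy of $\xi$ and $\dim V_1\geq 2$) yields $c(u_2+u_2')=c(u_2)=c(u_2')$. Combining these shows $c$ is constant, whence $\sigma(\alpha)=\alpha$ for every $\alpha\in D^\ast$, contradicting the choice of $\sigma$.

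I expect the main obstacle to be this last step: ruling out that the pointwise scalar $c(u_2)$ could conspire to convert $\sigma$-semilinearity into $D$-linearity. The two-pronged test above (scalar rescaling versus additive combination) suffices, but one must verify that enough independent pairs $(u_2,u_2')$ exist; if preferred, a concrete low-dimensional model $V_1=V_2=D^2$ with $\xi$ given by a non-degenerate matrix (e.g.\ $\xi(u_1,u_2)=u_{1,1}\sigma(u_{2,1})+u_{1,2}\sigma(u_{2,2})$) reduces everything to comparing coefficient vectors against the $2{\times}2$ matrix of $\beta$ on the basis plus one extra vector $e_1+\lambda e_2$ with $\sigma(\lambda)\neq\lambda$.
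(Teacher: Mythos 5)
Your proposal is correct and follows essentially the same route the paper intends: invoke Proposition \ref{cor:hip:inproj2prod} to obtain a hyperplane from a properly ($\sigma$-semilinear, $\sigma\neq\id$) sesquilinear form, and observe that any hyperplane arising from a Segre embedding is $\hipa(\beta)$ for a bilinear $\beta$, whose zero-set cannot coincide with that of a properly semilinear form. Your explicit proportionality argument for the scalar $c(u_2)$ just fills in the detail the paper leaves to the remark that a $k$-linear form with the same zero-set exists only when all companion automorphisms are the identity.
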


\begin{rem}
  If $k=n>2$ and $\mu$ is alternating, then $\hipa(\mu)$ is non-spiky and 
  thus non-flappy. This, together with \ref{cor:hip:inproj2prod}, means that in the Segre product
  of two projective spaces a hyperplane is flappy iff it is given by some non-degenerate
  2-semilinear form $\mu$ 
  (i.e. $\hipa(\mu)$ is non-degenerate in view of \ref{cor:wystawLS}\eqref{cor:wystawLS:i}), 
  but it is no longer true if the number of factors is more than two.
  
  Moreover, when $k=n>2$ and $D$ is algebraically closed there 
  are no forms $\mu$ that are non-zero (or equivalently non-degenerate in this case) 
  on all $n$ segments. 
  So, it follows that there are no non-degenerate hyperplanes that arise from a
  form (cf. \ref{fct:mu}), and thus there are no flappy hyperplanes by
  \ref{lem:hipynieluskowate}.
\end{rem}

The theory of multilinear forms is definitely complex in general. Even for
3-linear forms there is no complete classification (cf. \cite{shaw}). Therefore,
it should not be expected that the classification of hyperplanes in Segre
products of many factors is possible at the moment.

Clearly, there are non-\luskwiaty\ hyperplanes in $\PencSpace(k,V)$ as well.
An interesting example of such hyperplane can be established without the form $\mu$.
Let $\hipa(W)$ be the set of those $k$-subspaces of $V$ that non-trivially intersect
some fixed subspace $W$ of codimension $k$ in $V$ (cf.\ \cite{cuypers}).
Note that $\hipa(W)$ is a hyperplane in $\PencSpace(k,V)$ regardless of whether it is
embeddable or non-embeddable, while $\hipa(\mu)$ occurs only in embeddable case.

\begin{lem}\label{lem:niekolczaty}
  All hyperplanes of the form\/ $\hipa(W)$ in\/ $\PencSpace(k,V)$ are non-\kolczaty.
\end{lem}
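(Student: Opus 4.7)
My plan is to prove non-\kolczaty\ by exhibiting a concrete point of $\hipa(W)$ whose every neighbor in $\PencSpace(k,V)$ still lies in $\hipa(W)$.

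First I would set $\dim W = \dim V - k$ and pick a $k$-subspace $U$ with $\dim(U\cap W)\ge 2$. Such a $U$ exists as soon as $\dim W \ge 2$, which is precisely the range in which $\PencSpace(k,V)$ is a proper partial linear space (so that hyperplanes and lines behave as claimed); one just takes any $2$-subspace of $W$ and completes it to a $k$-subspace of $V$. Clearly $U\in\hipa(W)$.

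Next I would verify that every line of $\PencSpace(k,V)$ through $U$ lies inside $\hipa(W)$. A line through $U$ has the form $\penc(H,B)$ with $H\subseteq U\subseteq B$, $\dim H=k-1$ and $\dim B=k+1$. Inside the $k$-space $U$ we have the two subspaces $H$ and $U\cap W$, so the inclusion-exclusion gives
\begin{equation*}
  \dim(H\cap W)=\dim\bigl(H\cap(U\cap W)\bigr)\ge (k-1)+2-k=1.
\end{equation*}
Hence $H\cap W\neq\{0\}$, and every $U'\in\penc(H,B)$, containing $H$, automatically meets $W$ non-trivially. Thus $\penc(H,B)\subseteq\hipa(W)$.

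Consequently the chosen $U$ is a point of $\hipa(W)$ no line through which leaves $\hipa(W)$, which by the definition of \kolczaty\ forbids $\hipa(W)$ from being \kolczaty. There is really no significant obstacle here; the only subtlety is observing that the degenerate small-dimensional cases (where $\dim W<2$) do not arise in the Grassmann setting to which the lemma applies, so the dimension count above is always available.
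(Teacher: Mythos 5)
Your proof is correct and follows essentially the same route as the paper's: both pick a point $U$ with $\dim(U\cap W)\ge 2$ and run the dimension count $(k-1)+2-k=1$ to see that no neighbour of $U$ can be complementary to $W$; the only cosmetic difference is that you phrase it via the vertex $H$ of a pencil through $U$, while the paper phrases it via the intersection $U_1\cap U_2$ with an alleged neighbour, which is the same $(k-1)$-subspace.
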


\begin{proof}
  Consider a hyperplane $\hipa = \hipa(W)$ in $\PencSpace(k,V)$.
  Take a point $U_1$ on $\hipa$ such that $\dim(U_1\cap W)\ge 2$.
  Any point $U_2$ not on $\hipa$ is complementary to $W$.
  Suppose that $U_1\adjac U_2\notin\hipa$, i.e.\ $\dim(U_1\cap U_2) = k-1$. Then
  the subspace $U_1\cap U_2$ is a hyperplane in $U_1$ and as such non-trivially
  intersects at least 2-dimensional subspace $U_1\cap W$ of $U_1$. Hence, there
  is a non-zero $w\in U_1\cap U_2\cap W$, a contradiction as $U_2\cap W$ should be trivial.
\end{proof}

Immediately from \ref{prop:wystaje2kolczaty} and \ref{lem:niekolczaty} none of hyperplanes of the form
$\hipa(W)$ is \luskwiaty.
Nevertheless, hyperplanes of this type are used to assemble hyperplanes in 
specific Segre products of Grassmann spaces.

\begin{prop}
  Let $V$ be a a finite-dimensional vector space. 
  For integers $k_1, k_2$ such that\/ $1 < k_1 < \dim(V)-1$ and $k_1 + k_2 = \dim(V)$
  $$
    \hipa_{k_1,k_2}(V):= \bset{ (U_1,U_2)\in\Sub_{k_1}(V)\times\Sub_{k_2}(V) \colon
    0 < \dim(U_1 \cap U_2) }
  $$
  is a non-degenerate non-\kolczaty\ hyperplane in $\M_{k_1,k_2}(V,V)$.
\end{prop}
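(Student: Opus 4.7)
The plan is to split the statement into three assertions: hyperplane, non-degeneracy, and non-spikiness. The first two follow from Theorem~\ref{thm:hip:inprod} once the slices of $\hipa_{k_1,k_2}(V)$ are identified with hyperplanes of the form $\hipa(W)$ from Lemma~\ref{lem:niekolczaty}, and non-spikiness is proved directly by a short dimension count that exploits the structure of lines in a Segre product.

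First I would compute the slices. Fix $a=(A_1,A_2)\in\Sub_{k_1}(V)\times\Sub_{k_2}(V)$. Directly from the definition,
\begin{cmath}
  \hipa_{k_1,k_2}(V)^{[a]}_2 = \bset{ X\in\Sub_{k_2}(V)\colon \dim(A_1\cap X)>0 },
\end{cmath}
which is precisely the hyperplane $\hipa(A_1)$ of $\PencSpace(k_2,V)$ introduced just before Lemma~\ref{lem:niekolczaty}, since $A_1$ has codimension $k_2$ in $V$. Symmetrically, $\hipa_{k_1,k_2}(V)^{[a]}_1=\hipa(A_2)$ in $\PencSpace(k_1,V)$. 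The hypothesis $1<k_1<\dim(V)-1$ gives $k_1,k_2\ge 2$, so in each Grassmann space there are points disjoint from $A_1$ respectively $A_2$, and both slices are proper hyperplanes. Applying Theorem~\ref{thm:hip:inprod} then yields that $\hipa_{k_1,k_2}(V)$ is a hyperplane in $\M_{k_1,k_2}(V,V)$, and because \emph{every} slice is a proper hyperplane (not just one of them), the hyperplane is non-degenerate in the sense defined in the text.

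For non-spikiness I would exhibit a single point of $\hipa_{k_1,k_2}(V)$ with no neighbour outside. Assume without loss of generality $k_1\le k_2$; pick $U_2\in\Sub_{k_2}(V)$ and $U_1\in\Sub_{k_1}(V)$ with $U_1\subseteq U_2$, so $\dim(U_1\cap U_2)=k_1\ge 2$. A line of $\M_{k_1,k_2}(V,V)$ through $(U_1,U_2)$ changes only one of the two coordinates. If $U_1'\in\Sub_{k_1}(V)$ is adjacent to $U_1$ in $\PencSpace(k_1,V)$, then $U_1\cap U_1'$ is a hyperplane of $U_1$, so
\begin{cmath}
  \dim(U_1'\cap U_2)\ge \dim(U_1\cap U_1'\cap U_2)\ge \dim(U_1\cap U_2)-1\ge 1,
\end{cmath}
hence $(U_1',U_2)\in\hipa_{k_1,k_2}(V)$. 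The same count with the roles of the two factors swapped handles lines in the second coordinate: any $U_2'$ adjacent to $U_2$ satisfies $\dim(U_2\cap U_2')=k_2-1$ and $\dim(U_1\cap U_2'){\ge}\dim(U_1\cap U_2)-1\ge 1$. Therefore no line through $(U_1,U_2)$ leaves $\hipa_{k_1,k_2}(V)$, and $\hipa_{k_1,k_2}(V)$ is not spiky.

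The only step that is not a direct bookkeeping exercise is the dimension inequality $\dim(U_1\cap U_1'\cap U_2)\ge \dim(U_1\cap U_2)-1$; this is the main, but still routine, obstacle and it is exactly the property that forced us to start with a point whose two components intersect in dimension at least $2$, which is where the assumption $1<k_1<\dim(V)-1$ is used.
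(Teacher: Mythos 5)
Your proposal is correct, and for two of the three claims it coincides with the paper's argument: the paper likewise identifies the slices $(\hipa_{k_1,k_2}(V))_i^{[U]}$ with the hyperplanes $\hipa(U_{3-i})$ of $\PencSpace(k_i,V)$ and invokes Theorem~\ref{thm:hip:inprod} to get that $\hipa_{k_1,k_2}(V)$ is a (clearly non-degenerate) hyperplane. You diverge only on non-spikiness: the paper obtains it wholesale from Lemma~\ref{lem:niekolczaty} (every slice $\hipa(W)$ is non-\kolczaty) together with the componentwise criterion of Fact~\ref{lem:prezerw:wystaw}\eqref{lem:prezerw:wystaw:ii}, whereas you exhibit an explicit witness $(U_1,U_2)$ with $U_1\subseteq U_2$ and check by a dimension count that every neighbour, in either coordinate, remains in the hyperplane. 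Your direct verification is in effect the proof of Lemma~\ref{lem:niekolczaty} unfolded simultaneously in both coordinates, the nested pair playing the role of the point $U$ with $\dim(U\cap W)\ge 2$ used there; it has the merit of being self-contained and of not leaning on the ``only if'' half of Fact~\ref{lem:prezerw:wystaw}\eqref{lem:prezerw:wystaw:ii}, which is the less transparent direction of that fact, while the paper's route is shorter because it reuses lemmas already established. Both arguments use the hypothesis $1<k_1<\dim(V)-1$ at the same spot, namely to secure $k_1,k_2\ge 2$ so that the witness point has an intersection of dimension at least $2$ to spare when a neighbour cuts it down by one.
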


\begin{proof}
  It suffices to note that for all  
  $U = (U_1,U_2)\in\Sub_{k_1}(V)\times\Sub_{k_2}(V)$ and $i=1,2$
  the set 
  $(\hipa_{k_1,k_2}(V))_i^{[U]} = \hipa(U_{3-i})$
  is, by \ref{lem:niekolczaty}, a non-\kolczaty\ hyperplane in $\PencSpace(k_i,V)$.
  Hence  $\hipa_{k_1,k_2}(V)$ is a hyperplane in our product by \ref{thm:hip:inprod}.
  It is clear that this hyperplane is non-degenerate. 
  So, $\hipa_{k_1,k_2}(V)$ is non-\kolczaty\ by \ref{lem:prezerw:wystaw}\eqref{lem:prezerw:wystaw:ii}.
\end{proof}

The example above is interesting in that the complement
\begin{cmath}
 \M_{k_1,k_2}(V,V)\setminus\hipa_{k_1,k_2}(V)
\end{cmath}
is a pretty well known structure of linear complements i.e.\ the substructure of the respective
product defined on the set 
\begin{cmath}
  \bset{ (U_1,U_2)\in\Sub_{k_1}(V)\times \Sub_{k_2}(V)\colon V=U_1\oplus U_2 }
\end{cmath}
(cf.\ \cite{blunck}, \cite{havlicek}, \cite{pankov}, \cite{segrel}). 
Such structures however are investigated with no parallelism involved in the mentioned papers.


\subsection{Affinization of the product of projective spaces vs the product of affine spaces}

According to \ref{fact:covering} most of affinizations of Segre products of
partial linear spaces are covered by affine spaces. Obviously it does not mean
that these affinizations are, up to an isomorphism, products of affine spaces in
general. Nevertheless, one could suppose that the complement of some hyperplane
in the product of projective spaces will be isomorphic to the product of affine
spaces, as affinizations of all components of this product are exactly affine
spaces. The complement of a degenerate hyperplane given by \eqref{eq:degenhipa}
is, loosely speaking, very close to be that kind of (cf.\ \ref{prop:isomorph}),
and the only barrier is a parallelism.
In view of \ref{lem:prezerw:wystaw}\eqref{lem:prezerw:wystaw:ii} all non-degenerate and a lot of
degenerate hyperplanes in the product of projective spaces are \kolczaty.

Let us stress on that the complements of \kolczaty\ hyperplanes in products of projective
spaces and products of affine spaces are essentially distinct.

\begin{prop}\label{prop:affproj}
  The complement of a \kolczaty\ hyperplane in a product of projective spaces is
  isomorphic to no product of affine spaces.
\end{prop}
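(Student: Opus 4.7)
The plan is to derive a contradiction using the distinction between affine partial linear spaces and general partial affine partial linear spaces, which has already been set up in the paper.

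First, I would assume for contradiction that there is an isomorphism $\M \setminus \hipa \cong \bigotimes_{j\in J}\A_j$ where each $\A_j$ is an affine space. Since every affine space is, in particular, an affine partial linear space, Fact \ref{fct:prodapls:gener}(ii) tells us that the Segre product $\bigotimes_{j\in J}\A_j$, together with its natural parallelism defined by \eqref{def:paral1:prod}, is itself an affine partial linear space. The property of being an affine partial linear space is a property of the point-line-parallelism structure, hence transported by the isomorphism; so $\M\setminus\hipa$ would also be an affine partial linear space.

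Next I would invoke Remark \ref{rem:reduct:notAPLS}. Projective spaces have at least three points on every line, so each component $\M_i = \ProjSpace(V_i)$ of $\M$ satisfies the ``three points'' hypothesis. Since $\hipa$ is assumed to be \kolczaty, Remark \ref{rem:reduct:notAPLS} directly yields that $\M\setminus\hipa$ is \emph{not} an affine partial linear space. This contradicts the conclusion of the previous paragraph, finishing the proof.

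The argument is essentially a two-line assembly of earlier results, so there is no real obstacle; the only point worth checking is that the isomorphism in the statement should be read as an isomorphism of the parallelized structures (i.e.\ preserving $\parallel_{\hipa}$ on one side and the product parallelism on the other), which is the natural category in which the complement and the product of affine spaces both live. If one insisted on interpreting the statement as an isomorphism of bare point-line structures, the argument would need to be refined, but in the context of this section (where parallelism is consistently part of the complement) the reading above is the intended one.
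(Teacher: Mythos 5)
Your proof is correct and is essentially identical to the paper's own argument: both derive the contradiction by combining Remark \ref{rem:reduct:notAPLS} (the complement of a spiky hyperplane is not an affine partial linear space) with Fact \ref{fct:prodapls:gener} (a product of affine spaces is one). Your closing remark about reading the isomorphism in the parallelized category is a reasonable clarification but not a deviation from the paper's route.
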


\begin{proof}
  The sufficient reason is that according to \ref{rem:reduct:notAPLS} the
  complement in question is not an affine partial linear space, while the product of
  any affine spaces is an affine partial linear space by \ref{fct:prodapls:gener}.
\end{proof}

The Segre product of affine polar spaces, or affine Grassmann spaces, or affine polar Grassmann
spaces seems to be also an affine partial linear space, although it is not straightforward by \ref{fct:prodapls:gener} and may require specific reasoning.
Thus we believe that the following analogy of \ref{prop:affproj} is true.

\begin{conj}
  Let\/ $\A_1$, $\A_2$, $\A_3$ be respectively the complement of \kolczaty\ hyperplane
  in the product of polar spaces, Grassmann spaces, and polar Grassmann spaces. 
  Then
\begin{sentences}\itemsep-2pt
\item
$\A_1$ is isomorphic to no product of affine polar spaces,
\item
$\A_2$ is isomorphic to no product of affine Grassmann spaces,
\item
$\A_3$ is isomorphic to no product of affine polar Grassmann spaces.
\end{sentences}
\end{conj}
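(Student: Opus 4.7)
The strategy is to reproduce the argument of Proposition \ref{prop:affproj} in each of the three cases. The claim will follow from the combination of two ingredients: (a) $\A_k$ is not an affine partial linear space; and (b) every Segre product of the listed type (affine polar, affine Grassmann, affine polar Grassmann) is an affine partial linear space. Since the property of being an affine partial linear space is preserved under isomorphism of partial affine partial linear spaces, (a) and (b) rule out any such isomorphism and prove the conjecture.

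Ingredient (a) is immediate from Remark \ref{rem:reduct:notAPLS}: the ambient Segre products of polar spaces, Grassmann spaces, and polar Grassmann spaces all have components on at least three points, so the remark yields that $\A_k$ is not an affine partial linear space. Ingredient (b) is the substantive part --- this is what the author warned ``may require specific reasoning''. A single affine polar space, affine Grassmann space, or affine polar Grassmann space is in general not itself an affine partial linear space: a line $L$ has a direction $L^\infty$ in the ambient geometry, and a point $a$ admits a parallel to $L$ through itself only when $a \adjac L^\infty$, which can fail. So Fact \ref{fct:prodapls:gener} cannot be applied directly. My plan is to prove (b) case by case within the standard analytic models, showing that for any point $a$ and any line $L = \subst(b,i,l)$ of the product one can always produce a line $K = \subst(a,i,k)$ through $a$ with $k \parallel_i l$, by exploiting the specific description of directions at infinity and the geometry of the ambient projective space into which the factor is embedded; the candidate $k$ is forced to live in the $i$-th factor by \eqref{def:paral1:prod}, so the argument must be intrinsic to that factor once the pair $(a_i, l)$ is extracted from the product.

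The main obstacle is precisely step (b). The polar Grassmann case is expected to be the most delicate, since the polar and Grassmann sources of failure combine there; a natural route is to argue by induction on the Grassmann parameter $k_i$ or to reduce to already-established projective and Grassmann results via the Pl{\"u}cker embedding. If direct verification of (b) proves out of reach, a fallback is to replace (b) by a weaker structural invariant that still distinguishes $\A_k$ from every Segre product of the listed type --- for example, a local property at the directions on $\hipa$ capturing the adjacency asymmetry between points on and off $\hipa$ that spiky hyperplane complements produce but products of the listed affine geometries cannot.
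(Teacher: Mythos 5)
There is a genuine gap, and in fact the paper itself offers no proof to compare against: this statement is left as an open conjecture, introduced with the explicit caveat that the Segre product of affine polar, affine Grassmann, or affine polar Grassmann spaces only ``seems to be'' an affine partial linear space and that this ``is not straightforward by \ref{fct:prodapls:gener} and may require specific reasoning.'' Your ingredient (a) is fine --- Remark \ref{rem:reduct:notAPLS} does give that each $\A_k$ fails to be an affine partial linear space --- but your ingredient (b) is precisely the missing reasoning, and your proposal does not supply it; it only announces a plan to verify it ``case by case within the standard analytic models.'' So what you have written reproduces the skeleton of Proposition \ref{prop:affproj} together with the authors' own diagnosis of why that skeleton does not immediately apply, but it does not close the conjecture.

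Worse, your own observations cast doubt on whether (b) is true at all. You correctly note that a single affine polar space (or affine Grassmann space) need not be an affine partial linear space, because a point $a$ off the hyperplane admits a parallel to $L$ only when $a$ is collinear with $L^\infty$ in the ambient geometry, and you also correctly note that under the product parallelism \eqref{def:paral1:prod} the witness line $K$ is forced into the same factor as $L$. Put together, these two remarks show that the product of such spaces inherits the failure from any single factor: if some $\A_i$ has a point--line pair with no parallel, so does the product. Hence (b) as stated is in jeopardy, and if it fails the invariant ``is/is not an affine partial linear space'' no longer separates $\A_k$ from the candidate products. Your fallback --- finding a weaker local invariant at the directions on $\hipa$ --- is the right instinct and is likely what the authors had in mind by ``specific reasoning,'' but it is not constructed here, so the argument as proposed would stall exactly where the paper stalls.
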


We presume that for the product of projective spaces even more can be said.

\begin{conj}\label{conj:prodnotcomp}
   If\/ $\A$ is the complement of a \kolczaty\ hyperplane in a product of
   projective spaces, then the family of lines of\/ $\A$ cannot be completed 
   so that the arising structure is a product of affine spaces.
\end{conj}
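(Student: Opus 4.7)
The plan is to argue by contradiction: assume a completion $\widehat{\lines}\supseteq\lines^\propto$ makes $\widehat{\A}:=(S\setminus\hipa,\widehat{\lines})$ isomorphic, via some bijection $\phi$, to a Segre product of affine spaces $\bigotimes_{j\in J}\mathbf{A}_j$ with point sets $\widehat S_j$, and then derive $\hipa=\emptyset$, contradicting the fact that a hyperplane must meet every line of $\M$. The strategy is to show that every ``old'' maximal strong subspace of $\A$ survives in $\widehat{\A}$ as a full factor fiber, set up coordinate functions along each factor, and force them to be bijections with $S_i$.

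First I would verify that each slice $F_i(a):=\subst(a,i,S_i)\setminus\hipa$ remains a maximal strong subspace of $\widehat{\A}$. Any two points of $F_i(a)$ differ in exactly one $\M$-coordinate, so the unique $\M$-line between them is not contained in $\hipa$ and already supplies an old line of $\A$ joining them; partial linearity of $\widehat{\A}$ then forces every line of $\widehat{\lines}$ through two points of $F_i(a)$ to coincide with that old line, which gives the subspace property and also maximality. By Fact~\ref{fct:prodpls:gener}\eqref{prodpls:strongsub} applied to $\widehat{\A}$, each $F_i(a)$ must coincide with a full factor fiber $\subst(c,\sigma(i,a),\mathbf{A}_{\sigma(i,a)})$. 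The slices $F_i(\bar a)$ indexed by $\bar a\in\bigtimes_{\ell\ne i}S_\ell$ partition $\A$ into equally sized, pairwise disjoint maximal strong subspaces, and two maximal strong subspaces in distinct factor directions of a Segre product meet in at most one point; using this together with the constancy of fiber size (when $\hipa$ is non-degenerate) one deduces that $\sigma(i,a)=:\sigma(i)$ is independent of $a$. Distinct $i$'s produce distinct partitions, so $\sigma\colon I\to J$ is an injection.

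Next I would extract coordinate functions. Since $\phi_{\sigma(i)}$ is constant on every $F_{i'}$-fiber for $i'\ne i$, intersecting over all $i'\ne i$ shows that $\phi_{\sigma(i)}(a)$ depends only on $a_i$, so $\phi_{\sigma(i)}=\psi_i\circ\mathrm{pr}_i$ for some $\psi_i\colon S_i\to\widehat S_{\sigma(i)}$. Surjectivity of $\psi_i$ is inherited from surjectivity of $\phi$. For injectivity I would observe that $\phi_{\sigma(i)}$ restricted to any single fiber $F_i(\bar a)$ is the canonical bijection of a direction-$\sigma(i)$ fiber onto $\widehat S_{\sigma(i)}$, whence $\psi_i$ is injective on each $S_i\setminus\hipa_i^{[\bar a]}$; to promote this to injectivity on all of $S_i$, for any two distinct $x,y\in S_i$ I would place them in a common such slice by choosing $\bar a$ avoiding the ``bad loci'' $T_x,T_y\subseteq\bigtimes_{\ell\ne i}S_\ell$, which by Theorem~\ref{thm:hip:inprod} are hyperplanes in $\bigotimes_{\ell\ne i}\M_\ell$ and hence do not cover the ambient product.

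The argument closes with a cardinality count. Bijectivity of each $\psi_i$ gives $|S_i|=|\widehat S_{\sigma(i)}|$, so $|S|=\prod_{j\in\sigma(I)}|\widehat S_j|$, whereas $|\A|=\prod_{j\in J}|\widehat S_j|=|S|\cdot\prod_{j\notin\sigma(I)}|\widehat S_j|$. Since every affine factor satisfies $|\widehat S_j|\ge 2$ and at the same time $|\A|\le|S|$, we are forced into $\sigma(I)=J$ and $|\A|=|S|$, i.e.\ $\hipa=\emptyset$, contrary to $\hipa$ being a hyperplane. I expect the main obstacle to be the constancy of the direction index $\sigma(i)$: ruling out mixed-direction partitions of $\widehat{\A}$ into maximal strong subspaces requires more than the mere observation that such subspaces from different factors meet in at most one point, and will need a separate combinatorial argument exploiting that all the $F_i(\bar a)$ have the same cardinality and partition $\A$. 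A secondary difficulty is the non-covering claim $T_x\cup T_y\neq\bigtimes_{\ell\ne i}S_\ell$ in highly degenerate configurations of the spiky hyperplane, which may demand dedicated case analysis.
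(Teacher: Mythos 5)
The statement you are proving is presented in the paper as a \emph{conjecture}: the authors give no proof, only heuristic support by comparing the orders of the relevant automorphism groups in one finite example (two factors over $\GF(p)$). So your attempt cannot be checked against an argument in the paper; it has to stand on its own, and as written it has genuine gaps.

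The central one is the claim that each slice $F_i(a)=\subst(a,i,S_i)\setminus\hipa$ is a \emph{maximal} strong subspace of the completed structure $\widehat{\A}$. Your argument only establishes that $F_i(a)$ is a strong subspace of $\widehat{\A}$ (two of its points lie on an old line, which by partial linearity is the unique $\widehat{\A}$-line through them and is contained in $F_i(a)$). Maximality does not follow: the added lines may join points of $F_i(a)$ to points outside $\subst(a,i,S_i)$ and so create a strictly larger strong subspace. Consequently, all that Fact~\ref{fct:prodpls:gener}\eqref{prodpls:strongsub} yields is that $F_i(a)$ is \emph{contained in} a fiber $\subst(c,j,\widehat{S}_j)$, possibly as a proper affine subspace of the affine space $\mathbf{A}_j$; with that, the whole coordinatization (the map $\sigma$, the functions $\psi_i$, their bijectivity) collapses. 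Even granting the full-fiber identification, the later steps have independent problems. The constancy of $\sigma(i,a)$, which you rightly flag as the main obstacle, silently invokes non-degeneracy of $\hipa$, but a \kolczaty\ hyperplane may be degenerate: by Fact~\ref{lem:prezerw:wystaw}\eqref{lem:prezerw:wystaw:ii} only \emph{some} slice $\hipa^{[a]}_i$ need be a spiky hyperplane, while others may equal $S_i$, so the slices $F_i(\bar a)$ need not be equicardinal and can even be empty. For the same reason the ``bad locus'' $T_x$ can be all of $\bigtimes_{\ell\neq i}S_\ell$, in which case $\psi_i(x)$ is simply undefined. Finally, the closing cardinality count is valid only for finite spaces: for infinite $S$ one has $\abs{S\setminus\hipa}=\abs{S}$ and $2\cdot\abs{S}=\abs{S}$, so neither $\sigma(I)=J$ nor $\hipa=\emptyset$ follows from counting. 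This last defect is repairable (injectivity of $\prod_{i}\psi_i$ on all of $S$ together with surjectivity of $\phi$ on $S\setminus\hipa$ would force $\hipa=\emptyset$ directly), but only if the earlier steps were sound. In short, the skeleton is a reasonable plan of attack on an open problem, but the load-bearing steps are not yet proved.
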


To justify \ref{conj:prodnotcomp} let us think through the following example.
\begin{exm}
  Let $V$, $W_i$ be vector spaces over a field $D$ for $i=1,\dots,n$ and let
  $\hipa$ be a \kolczaty\ hyperplane of $\PencSpace(1,V)\otimes\PencSpace(1,V)$.
  Set $\Aproj := \PencSpace(1,V)\otimes\PencSpace(1,V)\setminus\hipa$.
  Assume that $\Aproj$ can be completed by adding some new lines to an affine 
  partial linear space which is the product of affine spaces $\Aaff := \bigotimes_{i=1}^{n}\AfSpace(W_i)$.
  Then the maximal strong subspaces of $\Aproj$ and $\Aaff$ should be isomorphic.
  Moreover
  \begin{cmath}
    \Aut(\Aproj) \cong \Aut(\Aaff).
  \end{cmath}
  Let us compute the size of the corresponding automorphism groups.
  All the calculations will be done under the assumption that 
  $D = \GF(p)$ for a prime $p$, $\dim(V) = m'$, and
  $\dim(W_i) = m$ for all $i=1,\dots,n$.
  Let $\sigma$ be a permutation of $\set{1,\dots,n}$
  Then any collineation of $\Aaff$ is a map $f_{\sigma}=({f_{\sigma}}_1,\dots,{f_{\sigma}}_n)$, where 
  ${f_{\sigma}}_i$ is an isomorphism that maps $\AfSpace(W_i)$ onto
  $\AfSpace(W_{\sigma(i)})$.
  Thus 
  \begin{ctext}
  $\abs{\Aut(\Aaff)} = n!\, p^{m}\abs{\GL(p,m)}$, where
  $\abs{\GL(p,m)}  = (p^{m} - p^0)(p^{m} - p^1)\dots(p^{m}-p^{m-1})$.
  \end{ctext}
  By \ref{cor:hip:inproj2prod} the hyperplane $\hipa$ is determined by some non-degenerate 
  bilinear form $\xi$.
  The automorphisms of $\Aproj$ are exactly the automorphisms of the relation $\perp_\xi$
  (cf.\ \cite{segrel}) and each of these is uniquely determined by a collineation
  $f$ of $\PencSpace(1,V)$ and a permutation in $S_2$. 
  Thus
  \begin{cmath}
  \abs{\Aut(\Aproj)} = 2 \frac{\abs{\GL(p,m')}}{p-1}.
  \end{cmath}
  The sizes of both respective automorphism groups are polynomials in the variable $p$.
  Their degrees and leading coefficients are, respectively,
  ${m}^2+m$,  $n!$, and ${m'}^2-1$, $2$.
  From the isomorphism assumed we get $n=2$, $m' = m$
  and then $m^2 +m = m^2-1$ yields $m=-1$, a contradiction.
\end{exm}



\begin{flushleft}
  \noindent\small
  K. Petelczyc, M. \.Zynel\\
  Institute of Mathematics, University of Bia{\l}ystok,\\
  Akademicka 2, 15-267 Bia{\l}ystok, Poland\\
  \verb+kryzpet@math.uwb.edu.pl+,
  \verb+mariusz@math.uwb.edu.pl+
\end{flushleft}

\end{document}